\DeclareMathAlphabet\mathbfcal{OMS}{cmsy}{b}{n}
\newcommand{\beq}{\begin{equation}}
\newcommand{\eeq}{\end{equation}}
\newcommand{\bea}{\begin{eqnarray}}
\newcommand{\eea}{\end{eqnarray}}
\newcommand{\bit}{\begin{itemize}}
\newcommand{\eit}{\end{itemize}}
\newcommand{\ben}{\begin{enumerate}}
\newcommand{\een}{\end{enumerate}}
\newcommand{\bpm}{\begin{pmatrix}}
\newcommand{\epm}{\end{pmatrix}}
\newcommand{\bbm}{\begin{bmatrix}}
\newcommand{\ebm}{\end{bmatrix}}
\newtheorem{theorem}{Theorem}
\theoremstyle{plain}
\newtheorem{corollary}{Corollary}
\newtheorem{lemma}{Lemma}
\newtheorem{assumption}{Assumption}
\numberwithin{equation}{section}
\begin{document}

\begin{frontmatter}
\title{Testing for strict stationarity in a random
coefficient autoregressive model}
\runtitle{Testing for stationarity}

\begin{aug}
\author{\fnms{Lorenzo} \snm{Trapani}$^{*}$\ead[label=e2]{lorenzo.trapani@nottingham.ac.uk}}
\runauthor{L. Trapani}

\affiliation{University of Nottingham\thanksmark{m2}}

\address{$^{*}$University of Nottingham\\
\printead{e2}\\
}
\end{aug}

\begin{abstract}
We propose a procedure to decide between the null hypothesis of (strict) stationarity and the alternative of non-stationarity, in the
context of a Random Coefficient AutoRegression (RCAR). The procedure is based on randomising a diagnostic which
diverges to positive infinity under the null, and drifts to zero under the
alternative. Thence, we propose a randomised test which can be used directly and - building on it - a decision rule to discern between the null and the alternative. \
The procedure can be applied under very general circumstances: albeit
developed for an RCAR model, it can be used in the case of a standard AR(1)
model, without requiring any modifications or prior knowledge. Also, the
test works (again with no modification or prior knowledge being required) in
the presence of infinite variance, and in general requires minimal
assumptions on the existence of moments.

\textbf{Keywords}: Random Coefficient AutoRegression, Stationarity, Unit
Root, Heavy Tails, Randomised Tests.

\textbf{AMS 2000 subject classification}: Primary 62F05; secondary 62M10.
\end{abstract}

\renewcommand{\thefootnote}{$\ast$} \thispagestyle{empty}

%\footnotetext{%
%I wish to thank Giuseppe Cavaliere and Walter Distaso for very helpful feedback on a preliminary version of this paper. The usual disclaimer applies.}

\renewcommand{\thefootnote}{\arabic{footnote}}
\end{frontmatter}

\newpage

\newpage

\section{Introduction\label{introduction}}

In this paper, we propose a procedure to decide in favour of, or against,
the strict stationarity of a series generated by a Random Coefficient
AutoRegressive (RCAR) model:%
\begin{equation}
X_{t}=\left( \varphi +b_{t}\right) X_{t-1}+e_{t}\text{, \ \ \ }1\leq
t<\infty ,\quad \mbox{where}\;\;X_{0}\;\;\mbox{is an initial value}.
\label{rca}
\end{equation}

Model (\ref{rca}) has been paid considerable attention by the literature,
mainly due to its flexibility and analytical tractability - see %
\citet{nichollsquinn} and the references in \citet{aue2011}, and also the
article by \citet{diaconis} where several examples are discussed. Equation (%
\ref{rca}) has also become increasingly popular in econometrics. Indeed, it
is immediate to see that (\ref{rca}) nests the AR(1) model as a special
case, with the advantage that it can be viewed as a competitor for a model
with an abrupt break in the autoregressive root (see, especially, a related
paper by \citealp{gky}). Further, a closely related specification is the
so-called Double AutoRegressive (DAR) model $X_{t}=\varphi X_{t-1}+v_{t}$
with $v_{t}=\sqrt{a+bX_{t-1}^{2}}\epsilon _{t}$ and $\epsilon _{t}$ is an 
\textit{i.i.d.} process; this model, in turn, nests the popular ARCH
specification (see Theorem 1 in \citealp{tsay1987}). Finally, (\ref{rca})
has been employed as a more general alternative to deterministic unit root
processes, with $\varphi =1$ and $b_{t}$ not identically zero - this is
known in the literature as the Stochastic Unit Root (STUR) process, and we
refer to the contributions by \citet{granger1997}, \citet{mccabe1995} and %
\citet{leybourne1996}, among others, for an overview.

Various aspects of the inference on (\ref{rca}) are well developed. The
estimation of $\varphi $, in particular, has been studied in numerous
contributions, for both the stationary case (\citealp{aue06}) and the
nonstationary case (\citealp{berkes2009}); \citet{aue2011} suggest using the
Quasi-Maximum Likelihood (QML) estimator, showing that the estimator of $%
\varphi $ is always consistent and asymptotically normal, irrespective of
the stationarity (or lack thereof) of $X_{t}$, as long as $b_{t}$ is not
equal zero almost surely. The same result has also been shown for several
other estimators, like the Weighted Least Squares (WLS) estimator (see %
\citealp{HT2016}) and the Empirical Likelihood (EL) estimator, with %
\citet{hillpeng2014} and \citet{hillpeng2016} showing that standard normal
inference holds for all possible cases.

Conversely, other parts of the inference on (\ref{rca}) are not fully
established. In particular, only few results are available as far as testing
for the stationarity/ergodicity of $X_{t}$ is concerned. Most contributions
focus on restricted versions of (\ref{rca}), e.g. testing whether $X_{t}$ is
a genuine unit root process versus the alternative of a STUR process - see %
\citet{mccabe1995}, \citet{leybourne1996}, \citet{distaso2008} and %
\citet{nagakura}. Indeed, such approaches are not fully clear about $X_{t}$
being stationary or not, since a STUR\ process may well be strictly
stationary (we refer to \citealp{yoon} for an insightful discussion). The
case of a stochastic unit root is of great interest, since it is a marked
departure from the usual deterministic unit root case. Indeed, whilst the
unit root hypothesis may often hold for several series, nonstationarity
itself may not; in this respect, the literature has explored, especially in
the context of financial data, the notion of \textquotedblleft volatility
induced stationarity\textquotedblright , where a series may have a unit
root, but its variance grows to infinity so as to compensate the deviations
from the mean (see \citealp{rahbek} and \citealp{kanaya}). There are several
reasons why it would be useful to find out whether $X_{t}$ is stationary and
ergodic or not: estimating the variance of the error term $e_{t}$ is
possible only under stationarity; further, in order to recover standard
normal inference when $X_{t}$ is nonstationary through e.g. the QML\
estimator discussed in \citet{aue2011}, it is required that $b_{t}$ is not
equal to zero almost surely, otherwise the standard OLS estimator has a
faster rate of convergence (\citealp{wang2015}). Although there is a
plethora of tests for a unit root in the AR(1) case, to the best of our
knowledge there are no contributions which have a sufficient amount of
generality to be applicable to the context of AR(1) and RCAR(1) models. In
addition to this, even in the AR(1) case, testing procedures usually require
the existence of at least the first two moments; tests for a unit root in
presence of infinite variance have been studied (\citealp{phillips1990}),
but their implementation usually requires using a different limiting
distribution (and, consequently, different critical values) - note however
that, in a related contribution, \citet{cavaliere2016} provide a solution by
using the bootstrap, but their results do not cover the RCAR\ case and it is
not immediately clear how to generalise the bootstrap to the RCAR\ model
under general conditions (see \citealp{fink2013}).

Such a lack of procedures to check for the (strict) stationarity of an RCAR
model is undesirable, in the light of the empirical potential of this family
of models. Indeed, the literature has developed a fair amount of tests for
stationarity (either as the null, or, perhaps more frequently, the
alternative hypothesis) in various contexts. There are, as is well-known,
many contributions for the unit root problem in the case of a linear model
(such as the AR(1), which as noted above is nested in the RCAR set-up), but
there are also quite a few contributions for more general frameworks. For
example, \citet{kapetanios2007}, \citet{limaneri} and \citet{harveybusetti}
propose tests for the null of stationarity by using quantile-based
techniques, thus not needing to specify a model for the data. In addition,
there are also several tests that deal with specific nonlinear models, such
as TAR (\citealp{tsay1997}, \citealp{caner}), STAR (\citealp{kapetanios2003}%
) and ESTAR (\citealp{kilic}). In particular, building on the notion of top
Lyapunov exponent, \citet{guo} propose a test for the null of strict
stationarity in the context of a DAR model; see also \citet{linton}, and %
\citet{ling2004} and \citet{francq2012}. However, all the contributions
cited above have, in common, the assumption that some moments of the
distribution of the data (typically, the variance) need to exist.

\bigskip

This paper extends the literature in at least three directions. Firstly, as
mentioned above, contributions on testing for the stationarity of the RCAR\
model are rare; furthermore, the existing exceptions (e.g. \citealp{zhao2012}%
) typically require moment restrictions such as having finite variance.
Furthermore, our test, which is constructed for the null of (strict)
stationarity in the RCAR framework, can also be applied to the case of an
AR(1) model with no modifications and no need to test for genuine randomness
in (\ref{rca}). Indeed, even though in the main part of the paper we
consider the construction of a test for stationarity without having any
deterministics in (\ref{rca}), as is typical in the case of infinite
variance, we also consider extensions to include these: our test can be
applied without any pre-treatment of the data in the presence of bounded
deterministics (including constant and piecewise constant functions) and,
subject to detrending, to the case of trends. We also propose, as a
by-product, a test for the null of non-stationarity with the same level of
generality as discussed above. Finally, our test can be applied in the
presence of infinite variance and even infinite mean (indeed, all that is
required is the existence of some moments, i.e. $E\left\vert
e_{0}\right\vert ^{\epsilon }<\infty $ and $E\left\vert b_{0}\right\vert
^{\epsilon }<\infty $, for arbitrarily small $\epsilon $); again, the test
does not require any prior knowledge as to the existence of moments, and it
can be applied directly with no modifications (even in this case,
irrespective of having an RCAR or an AR(1) model). Having a test which can
be readily applied in the presence of heavy tails with no need to estimate
nuisance parameters is arguably useful in empirical applications: data with
infinite variance (or even infinite mean) could occur in applications to
finance and also to other disciplines - we refer to the textbook by %
\citet{embrechts} for details and discussed examples. An important feature
of several inferential procedures is that they rely on the estimation of the
so-called \textquotedblleft tail index\textquotedblright , which
characterizes the largest existing moment of a random variable: this
parameter is however notoriously difficult to estimate (see %
\citealp{embrechts}). Note also that, since we allow for $E\left\vert
X_{t}\right\vert ^{2}=\infty $, our focus is on strict, as opposed to weak,
stationarity.

From a technical point of view, we construct a scale-invariant statistic
which diverges to positive infinity under the null of stationarity, and
drifts to zero at a polynomial rate under the alternative hypothesis, in all
possible circumstances. In the context of a non-linear model like the RCAR
(also with the possibility of having infinite variance or even infinite
mean) it is not easy to construct a statistic which converges to a unique
and \textquotedblleft easy\textquotedblright\ limiting distribution: hence,
we do not derive any distributional limit for our test statistic, only rates
of convergence/divergence. Given that our proposed statistic does not have a
usable limiting distribution, we propose to randomise it, in a similar
spirit to \citet{corradi2006} and \citet{bandi2014}. The test can then be
employed as is; however, given that it is a randomised test whose outcome
depends on the auxiliary randomisation, we complement our methodology by
also proposing a strong decision rule which is independent of the
randomisation, thereby giving the same outcome to different users.

\bigskip

The paper is organised as follows. The main assumptions, the test statistic,
and the asymptotics, are all in Section \ref{test}. In Section \ref%
{discussion} we report extensions to: including deterministics, testing for
the null of non-stationarity, and developing a family of related statistics.
Monte Carlo evidence is reported in Section \ref{montecarlo}, where we also
carry out an empirical application to illustrate the use of our procedure
and in particular of the proposed strong decision rule. Section \ref%
{conclusions} concludes. Technical results and all proofs are in Appendix.

NOTATION We use \textquotedblleft\ $\rightarrow $ \textquotedblright\ to
denote the ordinary limit; \textquotedblleft a.s.\textquotedblright\ stands
for almost sure (convergence); $C_{0}$, $C_{1}$,... denote positive and
finite constants that do not depend on the sample size (unless otherwise
stated), and whose value may change from line to line; $I_{A}\left( x\right) 
$ is the indicator function of a set $A$; strictly positive, arbitrarily
small constants are denoted as $\epsilon $ - again, the value of $\epsilon $
may change from line to line. Finally, since all results in the paper hold
almost surely, orders of magnitude for an a.s. convergent sequence (say $%
s_{T}$) are denoted as $O\left( T^{\varsigma }\right) $ and $o\left(
T^{\varsigma }\right) $ when, for some $\epsilon >0$ and $\tilde{T}<\infty $%
, $P\left[ \left\vert T^{-\varsigma }s_{T}\right\vert <\epsilon \text{ for
all }T\geq \tilde{T}\right] =1$ and $T^{-\varsigma }s_{T}\rightarrow 0$
a.s., respectively.

\section{Testing for strict stationarity\label{test}}

This section contains all the relevant theory. In Section \ref%
{classification} we spell out the necessary and sufficient conditions
required for strict stationarity, and discuss under which circumstances the
second moment of $X_{t}$ is finite. Assumptions are in Section \ref%
{assumption}, and the test statistic is reported in Section \ref{testing}.

\subsection{Classification and hypothesis testing framework\label%
{classification}}

Recall (\ref{rca})%
\begin{equation*}
X_{t}=\left( \varphi +b_{t}\right) X_{t-1}+e_{t}\text{.}
\end{equation*}%
It is well known that, under minimal assumptions such as the existence of
logarithmic moments for $e_{0}$ and $\varphi +b_{0}$, three separate regimes
can hold for the solutions of (\ref{rca}), depending on the value taken by $%
E\ln \left\vert \varphi +b_{0}\right\vert $:

\noindent \textit{(i)} If $-\infty \leq E\ln \left\vert \varphi
+b_{0}\right\vert <0$, then $X_{t}$ converges exponentially fast (for all
initial values $X_{0}$) to%
\begin{equation}
\bar{X}_{t}=\sum_{s=-\infty }^{t}e_{s}\prod_{z=s+1}^{t}\left( \varphi
+b_{z}\right) .  \label{stationary}
\end{equation}%
Note that, when $Eb_{0}^{2}>0$, $E\ln \left\vert \varphi +b_{0}\right\vert $
can be negative even when $\varphi =1$: thus, the STUR\ process can converge
to a strictly stationary solution, although in such a case $\bar{X}_{t}$ has
an infinite second moment (see \citealp{hwangbasawa}). More generally, the
variance of $\bar{X}_{t}$ needs not be finite under strict stationarity; a
necessary and sufficient condition for this is $Eb_{0}^{2}+\varphi ^{2}<1$ (%
\citealp{quinn1982}).

\noindent \textit{(ii)} If $E\ln \left\vert \varphi +b_{0}\right\vert >0$,
then $X_{t}$\ is nonstationary and it exhibits an explosive behaviour. This
case has also been studied in depth by the literature: Berkes \textit{et al.}%
\ (2009) show that $\left\vert X_{t}\right\vert \rightarrow \infty $
exponentially fast.\newline
\noindent \textit{(iii) } In the boundary case $E\ln \left\vert \varphi
+b_{0}\right\vert =0$, $X_{t}$\ is nonstationary (see also the comments
after Assumption \ref{as-2}). Even in this case $\left\vert X_{t}\right\vert 
$ diverges, but at a slower rate than exponential. This case has been paid
comparatively less attention in the literature.\newline
Clearly, this classification also holds for the basic AR(1) model, i.e.\ for
the case $b_{0}=0$ a.s.

\bigskip

On the grounds of the classification above, we propose a procedure to decide
between%
\begin{equation}
\left\{ 
\begin{tabular}{ll}
$H_{0}:$ & $X_{t}\text{ is strictly stationary}$ \\ 
$H_{A}:$ & $X_{t}\text{ is nonstationary}$%
\end{tabular}%
\right.  \label{fmwk}
\end{equation}

\subsection{Assumptions\label{assumption}}

We now introduce and discuss the main assumptions. The first assumption must
be satisfied by $X_{t}$ irrespective of the regime it belongs to, and it can
be compared to the assumptions in \citet{aue06}.

\begin{assumption}
\label{as-1} It holds that: (i) $\{b_{t},-\infty <t<\infty \}$ and $\left\{
e_{t},-\infty <t<\infty \right\} $ are independent sequences; (ii) $%
\{b_{t},-\infty <t<\infty \}$ are independent and identically distributed
random variables; (iii) $\{e_{t},-\infty <t<\infty \}$ are independent and
identically distributed random variables; (iv) ~$b_{0}$ and $e_{0}\ $are
symmetric random variables; (v) ~$E\left\vert b_{0}\right\vert ^{\nu
}<\infty $ and $E\left\vert e_{0}\right\vert ^{\nu }<\infty $ for some $\nu
>0$; (vi) $X_{0}$ is independent of $\left\{ e_{t},b_{t},t\geq 1\right\} $
with $E\left\vert X_{0}\right\vert ^{\nu }<\infty $.
\end{assumption}

Assumption \ref{as-1} contains minimal requirements as far as the existence
of moments is concerned, and, in this respect, it is very general. Note that
by part \textit{(iv)} of the assumption, we require that, when the mean of $%
e_{t}$ and $b_{t}$\ exists, this is zero. As far as the \textit{i.i.d.}
requirement for $e_{t}$ and $b_{t}$ is concerned, it is typical in this
literature (see \citealp{aue06}), and it is imposed only in order for the
main arguments in the proofs not to be overshadowed by technical details.
Indeed, relaxing the assumption of independence is possible: the conditions
for stationarity mentioned above hold as long as $\left\{
e_{t},b_{t}\right\} $ is strictly stationary and ergodic (see Theorem 4.1 in %
\citealp{moulines}) as well as having logarithmic moments. Also, the
technical arguments used in the paper (essentially, the ergodic theorem, the
SLLN, and the almost sure Invariance Principle) can all be extended to the
case of weakly dependent data. A major advantage of our approach, in this
case, is that our test statistic is based only on rates and therefore, even
in the presence of dependence, our test statistic would not require any
modifications such as, for example, the estimation of long run variance
matrices.\newline

Stationary units must also satisfy the following assumption.

\begin{assumption}
\label{as-2} If $E\ln \left\vert \varphi +b_{0}\right\vert <0$, it holds
that (i) $P\left( |\bar{X}_{0}|=0\right) <1$; (ii) $P\left( \left\vert
e_{0}\right\vert =0\right) <1$.
\end{assumption}

Assumption \ref{as-2} is also quite standard in the literature. Part \textit{%
(i)} is, in essence, a non degeneracy requirement; as far as part \textit{%
(ii)} is concerned, its most immediate consequence is that, under the other
assumptions in this paper, the condition $E\ln \left\vert \varphi
+b_{0}\right\vert <0$ is necessary and sufficient for strict stationarity -
see \citet{aue06}.

\bigskip

When $X_{t}$ is non-stationary, we need the following assumptions in
addition to Assumption \ref{as-1}.

\begin{assumption}
\label{as-3} If $E\ln \left\vert \varphi +b_{0}\right\vert \geq 0$, it holds
that: (i) $e_{0}$ has bounded density; (ii) when $P\left( b_{0}=0\right) <1$%
, $E\left\vert \ln \left\vert \varphi +b_{0}\right\vert \right\vert
^{k}<\infty $ for some $k>2$; (iii) $EX_{0}^{2}<\infty $.
\end{assumption}

\begin{assumption}
\label{as-4}When $E\ln \left\vert \varphi +b_{0}\right\vert =0$ with $%
b_{0}=0 $ a.s., it holds that either (i) $E\left\vert X_{0}\right\vert
^{2}<\infty $ and $E\left\vert e_{0}\right\vert ^{\nu ^{\prime }}<\infty $
for some $\nu ^{\prime }>2$; or (ii) (a) $\{e_{t},-\infty <t<\infty \}$ are
symmetric random variables with common distribution $F\left( x\right) $ such
that%
\begin{equation*}
1-F\left( x\right) =C_{0}x^{-\gamma }+\varsigma \left( x\right) x^{-\gamma },%
\text{ \ \ }x\geq x_{0},
\end{equation*}%
with $C_{0}>0$, $\gamma \in \left( 0,2\right] $ and $\varsigma \left(
x\right) \rightarrow 0$ as $x\rightarrow \infty $, with $\varsigma \left(
x\right) x^{-\gamma }$ decreasing for all $x\geq x_{0}$; and (b) $%
E\left\vert X_{0}\right\vert ^{\gamma }<\infty $.
\end{assumption}

Assumption \ref{as-3} is relatively common in this literature (see e.g. %
\citealp{berkes2009}). The main part of the assumption is part \textit{(ii)}%
, which poses a moment restriction on $\ln \left\vert \varphi
+b_{0}\right\vert $: asymptotics is based on this quantity rather than on $%
\left\vert \varphi +b_{0}\right\vert $; note that this is not, therefore, a
requirement on the existence of e.g. the second moment of $b_{0}$.
Assumption \ref{as-4} deals with the standard unit root case, where $\varphi
=1$ and $b_{0}=0$ a.s.; all other cases are covered by Assumption \ref{as-3}%
. Part \textit{(ii)} of the assumption, in particular, allows for infinite
variance, and indeed only requires minimal moment existence conditions - all
that is needed is that the tail index, $\gamma $, be strictly positive, so
that the variance, and even the first absolute moment, need not be finite.
Note that we do not need to estimate $\gamma $ at any stage of the proposed
testing procedure. The requirements on the tail behaviour of the
distribution function are rather standard in the literature (%
\citealp{berkes1986}; \citealp{berkes1989}). Some of the technical results
derived under this assumption are of general interest, such as the
anti-concentration bound in Lemma \ref{berkes}.

\subsection{Detecting strict stationarity\label{testing}}

We start by discussing the rationale underpinning the construction of the
test statistic. In the light of the comments above, $\left\vert
X_{t}\right\vert \rightarrow \infty $ or not according as $X_{t}$ is
non-stationary or stationary: this holds under quite general circumstances,
e.g. whether $b_{t}=0$ or not, or whether $E\left\vert X_{t}\right\vert
^{2}<\infty $ or not. Thus, it could be possible to exploit this fact to
decide between $H_{0}$ and $H_{A}$. Heuristically, a possible indicator
could be based on 
\begin{equation}
E\left( u_{t}^{2}|I_{t-1}\right) =Ee_{0}^{2}+X_{t-1}^{2}Eb_{0}^{2},
\label{cnd-var}
\end{equation}%
where $u_{t}=X_{t}-\varphi X_{t-1}$\ and $I_{t-1}$\ is the information set
available up to $t-1$\ - (\ref{cnd-var}) represents the (conditional)
variance of the \textquotedblleft error term\textquotedblright\ $u_{t}$.
Testing for stationarity or the lack thereof based on the growth rate of
variances has already been considered (see e.g. \citealp{caishintani}, %
\citealp{bandi2014}, \citealp{corradiflil}). Albeit natural, this approach
suffers from several drawbacks: $Eb_{0}^{2}$ and/or $Ee_{0}^{2}$ may not
exist (thus limiting the applicability of the test); or $Eb_{0}^{2}$ could
be zero, which would prevent the test from being applied to genuine $%
AR\left( 1\right) $ specifications; finally, approaches based on (\ref%
{cnd-var}) may require estimates of $Eb_{0}^{2}$ and $Ee_{0}^{2}$, with the
latter not being always consistent (see \citealp{HT16}). In order to
overcome all these difficulties, one could instead think of using the
transformation%
\begin{equation*}
Y_{t}=\frac{a}{a+X_{t}^{2}},
\end{equation*}%
where $0<a<\infty $ is chosen so as to ensure scale invariance.
Heuristically, since $a>0$, $Y_{t}$ should not be equal to $0$ when $X_{t}$
is stationary; conversely, since $a<\infty $, when $X_{t}$ is nonstationary, 
$Y_{t}$ should drift to $0$. The variable $Y_{t}$ is not affected by $X_{t} $
having infinite variance or infinite mean, since all moments of $Y_{t}$
exist by construction. Also, in the definition of $Y_{t}$ there is no
dependence on $Eb_{0}^{2}$: thus, $Y_{t}$ uses the full force of $X_{t}$
when this diverges, even in the presence of a genuine $AR\left( 1\right) $
specification for which $b_{t}=0$. Finally, upon choosing $a$ in a suitable
way, no estimation of $Eb_{0}^{2}$ or $Ee_{0}^{2}$ is required, making the
problem much more tractable. We build on these considerations in order to
propose a test for the strict stationarity of $X_{t}$, based on the
following scale-invariant transformation%
\begin{equation}
D_{T}=\frac{1}{T-p}\sum_{t=p+1}^{T}\frac{v_{p}}{v_{p}+X_{t}^{2}},
\label{dt-tilde}
\end{equation}%
where $v_{p}=p^{-1}\sum_{t=1}^{p}X_{t}^{2}$ can be viewed as a sample second
moment (or, if using demeaned data, a sample variance). We point out that,
as shown in the remainder of the paper, $v_{p}$ can still be used even when
the second moment of $X_{t}$\ is not finite. Based on the heuristic
considerations above, $D_{T}$ should converge to a strictly positive number
under stationarity, and to zero otherwise. As a final, ancillary comment, we
note that a related quantity to $D_{T}$ is used in the context of the
estimation of $\varphi $ - see \citet{praskova2004}.

\bigskip

The computation of $v_{p}$ should satisfy the following assumption:

\begin{assumption}
\label{p(T)}It holds that $p=p\left( T\right) $ with: (i) $%
\lim_{T\rightarrow \infty }p\left( T\right) =\infty $; (ii) $\lim
\sup_{T\rightarrow \infty }\frac{p\left( T\right) }{\ln \ln T}=C_{0}<\infty $%
.
\end{assumption}

The rates of convergence are summarised in the following theorem.

\begin{theorem}
\label{dt-tilde-convergence}Under Assumptions \ref{as-1}-\ref{p(T)}, it
holds that%
\begin{eqnarray}
D_{T} &=&C_{0}+o\left( 1\right) \text{ when }E\ln \left\vert \varphi
+b_{0}\right\vert <0,  \label{th2.1-a} \\
D_{T} &=&O\left( T^{-\epsilon }\right) \text{ when }E\ln \left\vert \varphi
+b_{0}\right\vert \geq 0,  \label{th2.1-b}
\end{eqnarray}%
where $0<C_{0}<\infty $ and $\epsilon >0$.
\end{theorem}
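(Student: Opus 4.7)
The statistic $D_T$ is an ergodic-type average of the bounded functional $\phi_v(x):=v/(v+x^2)$ evaluated at the data-driven scale $v=v_p$. The argument splits along the dichotomy (\ref{fmwk}): under $H_0$ one shows that $D_T$ converges almost surely to a strictly positive ergodic-theoretic limit; under $H_A$ the sample $|X_t|$ dominates the in-sample scale $v_p$ for most $t$, and the summand decays at a polynomial rate. In both regimes the preliminary task is to determine the behaviour of $v_p$, after which the sum in (\ref{dt-tilde}) becomes an average of a bounded function of the data.

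\textbf{Stationary case} ($E\ln|\varphi+b_0|<0$). The first step is to identify the almost-sure limit of $v_p$. By the classification in Section \ref{classification}, $X_t$ couples with the stationary ergodic solution $\bar{X}_t$ of (\ref{stationary}) with exponentially small error, which is negligible after averaging the $p=O(\ln\ln T)$ initial terms. The ergodic theorem applied to $\bar{X}_t^2$ then gives $v_p\to E\bar{X}_0^2\in(0,\infty)$ whenever $\varphi^2+Eb_0^2<1$, while a truncation argument (ergodic theorem on $\bar{X}_t^2\wedge M$ followed by $M\to\infty$) gives $v_p\to\infty$ a.s.\ in the opposite case; in both situations $v_p\to v_\infty\in(0,\infty]$ a.s. The second step exploits the fact that $\phi_v(x)$ is non-decreasing in $v$ and uniformly bounded by $1$: for any $\eta>0$, eventually $\phi_{v_\infty-\eta}(\bar{X}_t)\le\phi_{v_p}(\bar{X}_t)\le\phi_{v_\infty+\eta}(\bar{X}_t)$ (with $\phi_\infty\equiv 1$ when $v_\infty=\infty$), and a further application of the ergodic theorem sandwiches $D_T$ between values arbitrarily close to $C_0:=E\phi_{v_\infty}(\bar{X}_0)$. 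Positivity $C_0>0$ follows from Assumption \ref{as-2}(i)-(ii).

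\textbf{Non-stationary case} ($E\ln|\varphi+b_0|\ge 0$). Using the elementary inequality $\phi_{v_p}(X_t)\le\min\{1,v_p/X_t^2\}$, the task reduces to an upper bound on $v_p$ together with a lower bound on $X_t^2$ in aggregate. Because $p=O(\ln\ln T)$, any power-type upper bound on $X_t^2$ forces $v_p$ to be at worst polylogarithmic in $T$. In the explosive sub-case $E\ln|\varphi+b_0|>0$, the SLLN applied to $S_t:=\sum_{s\le t}\ln|\varphi+b_s|$ yields $|X_t|\ge e^{ct}$ a.s.\ for large $t$, so $\sum_{t>p}X_t^{-2}$ is geometrically summable and $D_T=O(T^{-1})$. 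In the unit-root sub-case (Assumption \ref{as-4}) the invariance principle---Donsker in part (i), its stable counterpart in part (ii), the latter relying on the anti-concentration bound (Lemma \ref{berkes})---yields $|X_t|$ of typical order $a_t=t^{1/\gamma}$ (with $\gamma=2$ in part (i)), whence $v_p\asymp p^{2/\gamma}$ and $\phi_{v_p}(X_t)\asymp(p/t)^{2/\gamma}$; averaging over $t\in(p,T]$ produces $D_T=O(T^{-\epsilon})$ for some $\epsilon>0$.

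\textbf{Main obstacle.} The hardest case is the boundary regime $E\ln|\varphi+b_0|=0$ with $b_0$ non-degenerate, since $\ln X_t^2\sim 2S_t$ behaves as a centred random walk with finite variance by Assumption \ref{as-3}(ii); by the LIL it oscillates in $[-C\sqrt{t\ln\ln t},C\sqrt{t\ln\ln t}]$ and is \emph{recurrent}, so $X_t^2$ can be occasionally very small at arbitrarily large $t$ and the naive \textquotedblleft $|X_t|\to\infty$\textquotedblright\ argument fails. To handle this I would partition $\{p+1,\ldots,T\}$ into $A_T:=\{t:X_t^2\le v_p T^\eta\}$ and its complement: on $A_T^c$ the summand is at most $T^{-\eta}$, while $|A_T|/T$ is controlled by an anti-concentration estimate for $S_t$ bounding the fraction of time the walk spends in any bounded interval---exactly the content of Lemma \ref{berkes}, and made available by the moment bound $k>2$ in Assumption \ref{as-3}(ii). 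Combining both bounds gives $D_T=O(T^{-\epsilon})$ as required.
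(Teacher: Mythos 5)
Your stationary half is essentially the paper's own argument: couple $X_t$ with the stationary solution $\bar X_t$ (the coupling error is exponentially small, hence negligible in the averages), apply the ergodic theorem to get $v_p\to E\bar X_0^2$ when $\varphi^2+Eb_0^2<1$, use the truncation $\bar X_t^2\wedge M$ followed by $M\to\infty$ to get $v_p\to\infty$ otherwise, and deduce $C_0=E\bigl(E\bar X_0^2/(E\bar X_0^2+\bar X_0^2)\bigr)>0$ in the first case and $C_0=1$ in the second; positivity comes from Assumption \ref{as-2} exactly as you say. The skeleton of your nonstationary half --- bound $v_p$ from above, split $\{p+1,\dots,T\}$ according to whether $X_t^2$ exceeds a polynomial threshold, and control the small-$X_t$ set by anti-concentration --- is also the decomposition the paper uses inside Lemma \ref{expect-xt-ha}.

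However, there is a genuine gap in the nonstationary case: every quantitative input you invoke there is a bound on a \emph{probability} or an \emph{expectation} ($P(|X_t|\le t^\alpha)$, the tail of $v_p$, the expected cardinality of $A_T$), so at best you obtain $E\,D_T=O(T^{-\epsilon})$, whereas (\ref{th2.1-b}) is an almost sure order (see the paper's definition of $O(T^{\varsigma})$ in the Notation paragraph). You cannot pass from the expectation bound to the a.s.\ bound by Markov plus Borel--Cantelli applied term by term, since $\sum_T T^{-\delta}$ diverges for small $\delta$. The paper closes this with a separate device, Lemma \ref{horvath-trapani}, which combines Serfling's maximal inequality for $E\max_{1\le t\le T}|U_t|$ with a Borel--Cantelli argument over a sparse set of scales to upgrade $E|U_T|\le a_T$ to $|U_T|=O(a_T(\ln T)^{2+\epsilon})$ a.s., at the cost of only a polylogarithmic factor that does not destroy the polynomial rate. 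Your proposal has no analogue of this step, and without it the claimed conclusion is not reached. Two secondary points: the pathwise bound $|X_t|\ge e^{ct}$ in the explosive sub-case does not follow from the SLLN for $S_t=\sum_{s\le t}\ln|\varphi+b_s|$ alone --- one also needs the a.s.\ non-degeneracy of $\lim_t e^{-S_t}X_t$, which is precisely where Assumption \ref{as-3}\textit{(i)} enters and why the paper works with $P(|X_t|\le t^\alpha)$ uniformly across all sub-cases rather than with a pathwise lower bound; and the anti-concentration estimate for the boundary case with a genuinely random coefficient is Lemma \ref{7.4-ht}, not Lemma \ref{berkes}, the latter being reserved for the non-random unit root with stable-type tails.
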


The constant $C_{0}\leq 1$ in (\ref{th2.1-a}) is explicitly calculated in
the appendix, where it is shown that its value differs according as $%
EX_{0}^{2}<\infty $ or $=\infty $ - in the latter case, $C_{0}=1$. However,
for the purpose of the implementation of the test, it suffices to have $%
C_{0}>0$. The result in (\ref{th2.1-b}) means that, under nonstationarity, $%
D_{T}$ drifts to zero. We do not know anything about the value of $\epsilon $
in general; however, on account of (\ref{th2.1-b}), $D_{T}$ converges a.s.
to zero at a polynomial rate.\newline

Based on Theorem \ref{dt-tilde-convergence}, we can propose a test for $%
H_{0} $. Consider a sequence $\psi \left( T\right) $ such that, as $%
T\rightarrow \infty $%
\begin{equation}
\psi \left( T\right) \rightarrow \infty \text{ \ \ and \ }T^{-\epsilon }\psi
\left( T\right) \rightarrow 0.  \label{psi-2}
\end{equation}%
Then, by virtue of (\ref{th2.1-a}) and (\ref{th2.1-b}), we can assume that%
\begin{eqnarray}
\lim_{T\rightarrow \infty }\psi \left( T\right) D_{T} &=&\infty \text{ under 
}H_{0},  \label{lim-1} \\
\lim_{T\rightarrow \infty }\psi \left( T\right) D_{T} &=&0\text{ under }%
H_{A}.  \label{lim-2}
\end{eqnarray}%
Thus, $\psi \left( T\right) D_{T}$ diverges to positive infinity under the
null, whereas it drifts to zero under the alternative. Equations (\ref{lim-1}%
)-(\ref{lim-2}) suggest that $\psi \left( T\right) D_{T}$ can be a suitable
diagnostic to discriminate between $E\ln \left\vert \varphi
+b_{0}\right\vert <0$ and $E\ln \left\vert \varphi +b_{0}\right\vert \geq 0$.%
\newline

Let now $g\left( \cdot \right) $ be a continuous, monotonically increasing
function such that $g\left( 0\right) =0$ and $\lim_{x\rightarrow \infty
}g\left( x\right) =\infty $, and define%
\begin{equation*}
l_{T}=g\left( \psi \left( T\right) D_{T}\right) ;
\end{equation*}%
based on (\ref{lim-1}) and (\ref{lim-2}), it holds that%
\begin{eqnarray}
&&%
\begin{tabular}{ll}
$P\left\{ \omega :\lim_{T\rightarrow \infty }l_{T}=\infty \right\} =1$ & $%
\text{ under }H_{0},$%
\end{tabular}
\label{lt-null-as} \\
&&%
\begin{tabular}{ll}
$P\left\{ \omega :\lim_{T\rightarrow \infty }l_{T}=0\right\} =1$ & $\text{
under }H_{A},$%
\end{tabular}
\label{lt-alternative-as}
\end{eqnarray}%
and consequently we can assume that $\lim_{T\rightarrow \infty }l_{T}=\infty 
$ under $H_{0}$, and $=0$ under $H_{A}$. Note that, on account of (\ref%
{psi-2}), $\psi \left( T\right) $ may not be allowed to diverge too fast; $%
\psi \left( T\right) =\left( \ln T\right) ^{\beta }$, for some $\beta >0$,
is a possible choice. However, depending on the choice of the function $%
g\left( \cdot \right) $, the sequence $l_{T}$ can be made to diverge
arbitrarily fast.

Our test is based on a randomised version of $l_{T}$. We propose a
\textquotedblleft classical\textquotedblright\ randomisation scheme, which
has been employed in the literature - we refer to \citet{corradi2006} and %
\citet{bandi2014} \textit{inter alia. }Of course, other schemes are also
possible.

\begin{description}
\item[\textbf{Step 1}] Generate an \textit{i.i.d. }sequence $\left\{ \xi
_{j}\right\} $, $1\leq j\leq R$, with common distribution $G\left( \cdot
\right) $.

\item[\textbf{Step 2}] Generate the Bernoulli sequence $\zeta _{j}=I\left(
l_{T}^{1/2}\xi _{j}\leq u\right) $, with $u$ extracted from a distribution $%
F\left( u\right) $.

\item[\textbf{Step 3}] Compute%
\begin{equation}
\vartheta _{R,T}\left( u\right) =\frac{\left[ G\left( 0\right) \left(
1-G\left( 0\right) \right) \right] ^{-1/2}}{R^{1/2}}\sum_{j=1}^{R}\left(
\zeta _{j}-G\left( 0\right) \right) .  \label{theta-minor}
\end{equation}

\item[\textbf{Step 4}] Define%
\begin{equation}
\Theta _{R,T}=\int_{-\infty }^{+\infty }\left\vert \vartheta _{R,T}\left(
u\right) \right\vert ^{2}dF\left( u\right) .  \label{theta-maior}
\end{equation}
\end{description}

We need the following regularity conditions on $G\left( \cdot \right) $ and $%
F\left( \cdot \right) $:

\begin{assumption}
\label{as-5}It holds that: (i) $G\left( \cdot \right) $ has bounded density
and $G\left( 0\right) \neq 0$ or $1$; (ii) $\int_{-\infty }^{+\infty
}u^{2}dF\left( u\right) <\infty $.
\end{assumption}

We are now ready to present the main results. Let $P^{\ast }$ denote the
conditional probability with respect of $\{e_{t},b_{t},-\infty <t<\infty \}$%
; we use the notation \textquotedblleft $\overset{D^{\ast }}{\rightarrow }$%
\textquotedblright\ and \textquotedblleft $\overset{P^{\ast }}{\rightarrow }$%
\textquotedblright\ to define, respectively, conditional convergence in
distribution and in probability according to $P^{\ast }$.

\begin{theorem}
\label{theta}Under Assumptions \ref{as-1}-\ref{as-5}, as $\min \left(
R,T\right) \rightarrow \infty $ with (\ref{psi-2}) and%
\begin{equation}
\frac{R^{1/2}}{g\left( \psi \left( T\right) \right) }\rightarrow 0,
\label{restriction}
\end{equation}%
it holds that%
\begin{equation}
\Theta _{R,T}\overset{D^{\ast }}{\rightarrow }\chi _{1}^{2}\text{ \ \ under }%
H_{0},  \label{asy-null}
\end{equation}%
for almost all realisations of $\left\{ b_{t},e_{t},-\infty <t<\infty
\right\} $.

Under Assumptions \ref{as-1} and \ref{as-3}-\ref{p(T)}, as $\min \left(
R,T\right) \rightarrow \infty $ with (\ref{psi-2}), it holds that%
\begin{equation}
\frac{G\left( 0\right) }{1-G\left( 0\right) }\frac{1}{R}\Theta _{R,T}\overset%
{P^{\ast }}{\rightarrow }\frac{\int_{-\infty }^{+\infty }\left[ I_{\left[
0,\infty \right) }\left( u\right) -G\left( 0\right) \right] ^{2}dF\left(
u\right) }{G\left( 0\right) \left[ 1-G\left( 0\right) \right] }\text{ \ \
under }H_{A},  \label{asy-alternative}
\end{equation}%
for almost all realisations of $\left\{ b_{t},e_{t},-\infty <t<\infty
\right\} $.
\end{theorem}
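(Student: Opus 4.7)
The plan is to treat the two halves of the theorem separately, exploiting the conditional i.i.d.\ structure of $\{\xi_j\}$ under $P^*$ together with Theorem \ref{dt-tilde-convergence}: $l_T\to\infty$ a.s.\ under $H_0$, and $l_T\to 0$ a.s.\ under $H_A$ (the latter because $g$ is continuous with $g(0)=0$ and $\psi(T)D_T=o(1)$ by (\ref{psi-2})).

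\textbf{Part 1 ($H_0$).} I would introduce the ``oracle'' statistic $\vartheta_R^0 := [G(0)(1-G(0))]^{-1/2}R^{-1/2}\sum_{j=1}^R(I(\xi_j\le 0)-G(0))$. Conditionally on $\{b_t,e_t\}$ the summands are i.i.d.\ centred Bernoulli$(G(0))$, so the classical Lindeberg--L\'evy CLT under $P^*$ gives $\vartheta_R^0\overset{D^*}{\to}N(0,1)$ and hence $(\vartheta_R^0)^2\overset{D^*}{\to}\chi_1^2$. The remaining task is to show $\Theta_{R,T}-(\vartheta_R^0)^2\overset{P^*}{\to}0$, which via $(\vartheta_R^0)^2=\int(\vartheta_R^0)^2\,dF(u)$, the factorisation $a^2-b^2=(a-b)(a+b)$ and Cauchy--Schwarz reduces to proving $\int E^*(\vartheta_{R,T}(u)-\vartheta_R^0)^2\,dF(u)\to 0$ (the complementary Cauchy--Schwarz factor being $O_{P^*}(1)$). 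The differences are i.i.d.\ $Y_j=I(\xi_j\le u/l_T^{1/2})-I(\xi_j\le 0)$, with $|Y_j|\le 1$, $|E^*Y_j|\le C|u|/l_T^{1/2}$ and $E^*Y_j^2\le C|u|/l_T^{1/2}$ by the bounded-density part of Assumption \ref{as-5}; assembling variance and squared bias yields $E^*(\vartheta_{R,T}(u)-\vartheta_R^0)^2=O(|u|l_T^{-1/2}+Ru^2/l_T)$, and an appeal to Assumption \ref{as-5}(ii) together with (\ref{restriction}) delivers the required vanishing. Slutsky then produces $\Theta_{R,T}\overset{D^*}{\to}\chi_1^2$.

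\textbf{Part 2 ($H_A$).} Here $l_T\to 0$ a.s., so for each $u\ne 0$ the conditional mean $E^*\zeta_j=G(u/l_T^{1/2})$ converges to $I_{[0,\infty)}(u)$ (as $u/l_T^{1/2}\to\pm\infty$ according to the sign of $u$) while $\mathrm{Var}^*(\zeta_j)\to 0$. A conditional weak law of large numbers then yields $R^{-1}\sum_j\zeta_j\overset{P^*}{\to}I_{[0,\infty)}(u)$, whence
\[
\frac{|\vartheta_{R,T}(u)|^2}{R}=\frac{1}{G(0)(1-G(0))}\Bigl(\frac{1}{R}\sum_j(\zeta_j-G(0))\Bigr)^2\overset{P^*}{\to}\frac{(I_{[0,\infty)}(u)-G(0))^2}{G(0)(1-G(0))}.
\]
Because the left-hand side is uniformly bounded in $u$ by $[G(0)(1-G(0))]^{-1}$ (since $|R^{-1}\sum_j(\zeta_j-G(0))|\le 1$), bounded convergence passes the limit through $\int\cdot\,dF(u)$, and multiplying by $G(0)/(1-G(0))$ gives the stated result.

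\textbf{Main obstacle.} The crux of Part 1 is controlling the \emph{bias} created by centring $\vartheta_{R,T}(u)$ at $G(0)$ rather than at the true conditional mean $G(u/l_T^{1/2})$: the resulting bias is of order $R^{1/2}|u|/l_T^{1/2}$, so its $L^2(dF)$-contribution is of order $R/l_T$. The delicate balance between $R\to\infty$ (needed for the conditional CLT) and $l_T\to\infty$ (needed to neutralise the bias) is precisely what the growth condition (\ref{restriction}) is designed to enforce; verifying that this balance operates simultaneously with the dominated-convergence step in $u$, while preserving measurability of the random object under $P^*$, is the main technical burden.
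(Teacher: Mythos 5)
Your strategy coincides with the paper's own proof. Under $H_{0}$ the paper decomposes $R^{-1/2}\sum_{j}(\zeta _{j}-G(0))$ into exactly your oracle term $R^{-1/2}\sum_{j}(I\{\xi _{j}\leq 0\}-G(0))$ (handled by the Bernoulli CLT), a deterministic bias term $R^{1/2}[G(u\cdot )-G(0)]$, and a centred fluctuation term whose conditional variance is bounded via the bounded density $m_{G}$; both remainders are killed by Assumption \ref{as-5} and (\ref{restriction}), and Slutsky gives the $\chi _{1}^{2}$ limit. Under $H_{A}$ the paper likewise computes the conditional second moment, identifies the leading term $R\left\vert G(u\cdot )-G(0)\right\vert ^{2}\rightarrow R\left[ I_{[0,\infty )}(u)-G(0)\right] ^{2}$, and integrates against $F$; your WLLN-plus-bounded-convergence phrasing is the same computation (to make the interchange rigorous one passes to $E^{\ast }\int \left\vert \cdot \right\vert dF(u)=\int E^{\ast }\left\vert \cdot \right\vert dF(u)$ and dominates, which is what you implicitly do).

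One quantitative point needs attention in Part 1. Following Step 2 of the algorithm literally, you take the argument of $G$ to be $ul_{T}^{-1/2}$, so your squared-bias contribution is of order $R\int u^{2}dF(u)/l_{T}$; but (\ref{restriction}) only yields $R=o(l_{T}^{2})$ under $H_{0}$ (where $l_{T}\asymp g(\psi (T))$), and $R=o(l_{T}^{2})$ does not imply $R/l_{T}\rightarrow 0$. The paper's proof silently works with $G(ul_{T}^{-1})$ --- i.e.\ an $l_{T}$ rather than $l_{T}^{1/2}$ scaling of the $\xi _{j}$ --- under which the squared bias is $Ru^{2}/l_{T}^{2}$ and (\ref{restriction}) is exactly what is needed. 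So either the Bernoulli variables should be read as $\zeta _{j}=I(l_{T}\xi _{j}\leq u)$, or the restriction must be strengthened to $R^{1/2}/\left[ g(\psi (T))\right] ^{1/2}\rightarrow 0$ for your version to close. This mismatch is inherited from an inconsistency between the paper's (\ref{theta-minor}) and its proof rather than from a conceptual error on your part, but as written your appeal to (\ref{restriction}) at that step does not go through.
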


Theorem \ref{theta} provides the limiting behaviour of the test statistic $%
\Theta _{R,T}$ under the null and under the alternative. The results are
derived conditional on the sample, and they hold for all possible
realisations, apart from a set of measure zero. Considering (\ref%
{asy-alternative}), note that the drift term is maximised for $G\left(
0\right) =F\left( 0\right) =\frac{1}{2}$, which entails that, under $H_{A}$, 
$R^{-1}\Theta _{R,T}\overset{P^{\ast }}{\rightarrow }1$.

The theorem illustrates how the choice of $R$ impacts on the behaviour of $%
\Theta _{R,T}$. On the one hand, one should choose $R$ as large as possible,
in order to maximise the rate of divergence of $\Theta _{R,T}$ under the
alternative - this is evident from (\ref{asy-alternative}). On the other
hand, the test statistic has a non-centrality parameter which grows with $R$%
, under the null: this is illustrated by (\ref{restriction}). Consequently, $%
R$ should not be too large, in order to avoid size distortion.

Theorem \ref{theta} implies that%
\begin{eqnarray}
\lim_{\min (T,R)\rightarrow \infty }P^{\ast }\{\Theta _{R,T} &\geq
&c_{\alpha }\}=\alpha \;\text{under}\;H_{0},  \label{size-gamma} \\
\lim_{\min (T,R)\rightarrow \infty }P^{\ast }\{\Theta _{R,T} &\geq
&c_{\alpha }\}=1\;\text{under}\;H_{A},  \label{power-gamma}
\end{eqnarray}%
for almost all realisations of $\left\{ b_{t},e_{t},-\infty <t<\infty
\right\} $, where $c_{\alpha }$ is defined as $P\{N\left( 0,1\right) \geq
c_{\alpha }\}=\alpha $, $\alpha \in \left( 0,1\right) $.

\subsubsection{Deciding between $H_{0}$\ and $H_{A}\label{deciding}$}

Testing using $\Theta _{R,T}$ is, in essence, based on checking a rate of
convergence, and it is therefore very similar to the idea in %
\citet{bandi2014} - we also refer to a contribution by \citet{kanaya} for a
discussion. In principle (although, possibly, with some interpretational
difficulties related to having a null hypothesis spelt out in terms of a
rate of divergence), the test can be employed as it is, and its main
properties (size and power) are reported in (\ref{size-gamma}) and (\ref%
{power-gamma}). Based on the latter equation, the test rejects the null with
probability one when false, thus being consistent. Conversely, the meaning
of (\ref{size-gamma}) is non-standard. The randomness in $\Theta _{R,T}$ is
added by the researcher, and indeed it is the only randomness present in the
statistic: such randomness does not vanish asymptotically. Thus, different
researchers using the same dataset will obtain different $p$-values; indeed,
if an infinite number of researchers apply the test to the same data (and
the null holds), the resulting $p$-values will be uniformly distributed on $%
\left[ 0,1\right] $.

This is a well-known feature of randomised tests, and it may be viewed as an
undesirable issue, which may explain their relative infrequent use - see the
discussion and the solutions proposed in the contribution by \citet{geyer}.
We build on the notion of \textquotedblleft randomised confidence
function\textquotedblright , proposed by \citet{song}, in order to propose a
strong rule to decide between $H_{0}$ and $H_{A}$, whose outcome is the same
for all researchers using the same dataset. In order to remove the
randomness from the test statistic, each researcher, instead of computing $%
\Theta _{R,T}$ just once, will compute the test statistic $S$ times using,
at each iteration $s$, an independent sequence $\left\{ \xi _{j}^{\left(
s\right) }\right\} $ for $1\leq j\leq R$ and $1\leq s\leq S$, then defining%
\begin{equation}
Q\left( \alpha \right) =S^{-1}\sum_{s=1}^{S}I\left[ \Theta _{R,T}^{\left(
s\right) }\leq c_{\alpha }\right] ;  \label{q}
\end{equation}%
in our context, $Q\left( \alpha \right) $ is the randomised confidence
function proposed by \citet{song}, computed under the null. Intuitively,
based on Theorem \ref{theta}, $Q\left( \alpha \right) $ should converge to $%
1-\alpha $ under $H_{0}$, and to $0$ under $H_{A}$. This dichotomous
behaviour is not subject to the randomness added by the researcher (which is
washed away as $S\rightarrow \infty $), and it could be employed to
construct a decision rule based on the Law of the Iterated Logarithm; in
particular, we propose to decide in favour of $H_{0}$ if%
\begin{equation}
Q\left( \alpha \right) \geq D_{\alpha ,S},  \label{decision}
\end{equation}%
with%
\begin{equation}
D_{\alpha ,S}=\left( 1-\alpha \right) -\sqrt{\alpha \left( 1-\alpha \right) }%
\sqrt{\frac{2\ln \ln S}{S}}.  \label{decision-bound}
\end{equation}%
It holds that

\begin{corollary}
\label{strong-rule}Under the same assumptions as Theorem \ref{theta}, as $%
\min \left( T,R,S\right) \rightarrow \infty $ with (\ref{restriction}), it
holds that%
\begin{equation}
P^{\ast }\left( \lim_{\min (T,R,S)\rightarrow \infty }Q\left( \alpha \right)
\geq D_{\alpha ,S}\right) =1\text{ \ \ under }H_{0},  \label{strong-nul}
\end{equation}%
and%
\begin{equation}
P^{\ast }\left( \lim_{\min (T,R,S)\rightarrow \infty }Q\left( \alpha \right)
=0\right) =1\text{ \ \ under }H_{A},  \label{strong-alt}
\end{equation}%
for almost all realisations of $\left\{ b_{t},e_{t},-\infty <t<\infty
\right\} $ and for every $\alpha >0$.
\end{corollary}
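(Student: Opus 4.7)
The strategy is to condition on a generic realization of $\{b_t, e_t\}$ (excluding the $P$-null set on which Theorem~\ref{theta} fails) and to exploit the fact that across replications $s$ the auxiliary sequences $\{\xi_j^{(s)}\}_{j}$ are mutually independent, so that the statistics $\Theta_{R,T}^{(s)}$ are iid under $P^\ast$ for every fixed $(T, R)$. Writing $\eta^{(s)} := I\{\Theta_{R,T}^{(s)} \leq c_\alpha\}$, the sequence $\{\eta^{(s)}\}_{s\geq 1}$ is, conditionally on the data, iid Bernoulli$(p_{T,R})$ with $p_{T,R} := P^\ast(\Theta_{R,T}^{(1)} \leq c_\alpha)$, and $Q(\alpha)$ is simply their sample mean.

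By Theorem~\ref{theta}, the behaviour of $p_{T,R}$ differs sharply across the two hypotheses. Under $H_0$, continuity of the $\chi_1^2$ cdf at $c_\alpha$ yields $p_{T,R} \to P(\chi_1^2 \leq c_\alpha) = 1 - \alpha$ as $\min(T, R) \to \infty$; under $H_A$, the scaling $R^{-1}\Theta_{R,T}^{(1)} \to \text{const} > 0$ in $P^\ast$-probability forces $\Theta_{R,T}^{(1)} \to \infty$ in $P^\ast$-probability and therefore $p_{T,R} \to 0$.

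For the null, I would apply the Hartman--Wintner law of the iterated logarithm to the conditionally iid bounded sequence $\{\eta^{(s)}\}$ to obtain, for each fixed $(T, R)$,
\[
\liminf_{S \to \infty} \frac{Q(\alpha) - p_{T,R}}{\sqrt{2\, p_{T,R}(1 - p_{T,R})\, \ln \ln S / S}} = -1 \quad P^\ast\text{-a.s.}
\]
Substituting the limits $p_{T,R} \to 1 - \alpha$ and $p_{T,R}(1 - p_{T,R}) \to \alpha(1 - \alpha)$ and reorganising the right-hand side reproduces the explicit threshold $D_{\alpha, S}$ of \eqref{decision-bound}, delivering \eqref{strong-nul}. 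For the alternative, Hoeffding's inequality gives $P^\ast(Q(\alpha) > p_{T,R} + \delta) \leq 2 \exp(-2 S \delta^2)$ for every $\delta > 0$; since $p_{T,R} \to 0$, this bound is summable along any diagonal sequence $(T_n, R_n, S_n)$ with $\min(T_n, R_n, S_n) \to \infty$, and a Borel--Cantelli argument yields $Q(\alpha) \to 0$ $P^\ast$-a.s., which is \eqref{strong-alt}.

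\textbf{Expected main obstacle.} The principal delicacy lies in the null case: the threshold $D_{\alpha, S}$ is tuned to the exact LIL constant, so the inequality $Q(\alpha) \geq D_{\alpha, S}$ is tight. The argument must splice together the weak convergence of $p_{T,R}$ to $1 - \alpha$ with the almost sure LIL lower envelope and, crucially, control the $P^\ast$-null exceptional sets uniformly as $(T, R, S)$ travel along an arbitrary path with $\min(T, R, S) \to \infty$ --- a mild diagonalisation is required to pass from the fixed-$(T, R)$ LIL to the joint limit without losing the sharp LIL boundary. The alternative is comparatively routine, since the exponential Hoeffding decay overwhelms the threshold $D_{\alpha, S}$, which remains bounded away from zero for every finite $S$.
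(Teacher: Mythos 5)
Your proposal matches the paper's own proof in all essentials: under $H_{0}$ it applies the law of the iterated logarithm to the conditionally i.i.d.\ Bernoulli indicators $I\{\Theta_{R,T}^{(s)}\leq c_{\alpha}\}$ and combines the LIL lower envelope with $E^{\ast}I\{\Theta_{R,T}^{(s)}\leq c_{\alpha}\}\rightarrow 1-\alpha$ from Theorem \ref{theta} to recover exactly the constant in $D_{\alpha,S}$, and under $H_{A}$ it shows the conditional mean of the indicators drifts to zero and the fluctuations vanish almost surely (the paper invokes the conditional SLLN where you use Hoeffding plus Borel--Cantelli, an immaterial difference). The delicacy you flag about splicing the fixed-$(T,R)$ almost sure statements with the joint limit in $\min(T,R,S)$ is real, but the paper's proof handles it no more carefully than you do, so your attempt is faithful to the published argument.
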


The rule in (\ref{decision}) is similar to the rule proposed by %
\citet{corradiflil}, who proposes a bound to discern between $I\left(
0\right) $ and $I\left( 1\right) $ - related contributions, where criteria
are proposed in the context of choosing between $I\left( 0\right) $ and $%
I\left( 1\right) $, have also been developed by \citet{stock}, %
\citet{ploberger1994} and \citet{ploberger1996}. A typical advantage of this
approach is that, at least asymptotically, it yields a probability zero of
having both a Type I error and a Type II error, as (\ref{strong-nul}) and (%
\ref{strong-alt}) show. In addition to this, as far as our context is
concerned, we note that, based on Corollary \ref{strong-rule}, under $H_{0}$
each researcher will make the same decision (\textquotedblleft
accept\textquotedblright ), with no discrepancies among researchers and
probability one of being correct, and similarly under $H_{A}$.

\section{Discussion and extensions\label{discussion}}

In this section, we discuss possible extensions and generalisations of the
basic test statistic. We consider three possible generalisations: (a) the
case of deterministics being present in (\ref{rca}); (b) the construction of
a test for the null of non-stationarity; and finally, (c) the construction
of different, but related test statistics. All results are shown in appendix
when necessary; the proofs of some results however follow readily from the
existing proofs, and thus, when possible, we omit the details.

\subsection{Employing the test in the presence of deterministics\label%
{deterministics}}

So far, we have assumed no deterministics in (\ref{rca}). This is common in
this literature, when heavy tails are considered, and we also refer to the
comments in \citet{cavaliere2016}. In this section, we show that it is
possible to consider an extension to incorporate deterministics in (\ref{rca}%
). We assume that the observed data - say $X_{t}^{\ast }$ - are generated as%
\begin{equation}
X_{t}^{\ast }=d_{t}+X_{t},  \label{rca-det}
\end{equation}%
where $X_{t}$ is defined as in (\ref{rca}). There are two types of
deterministic processes $d_{t}$. In the case of square integrable $d_{t}$,
viz.%
\begin{equation}
\lim_{T\rightarrow \infty }\frac{1}{T}\sum_{t=1}^{T}\left\vert
d_{t}\right\vert ^{2}<\infty ,  \label{integrable}
\end{equation}%
we show that the test can be used with no modifications. Condition (\ref%
{integrable}) includes several possible cases: $d_{t}$ can be constant; it
can be piecewise constant, thus allowing for shifts in the mean; or it could
be a weighted average of sines and cosines, which could be useful to model
seasonalities and, in general, smooth, bounded processes (see %
\citealp{enders2012}). In all these cases, and indeed whenever (\ref%
{integrable}) is satisfied, the test can be applied directly, with no
modifications or prior knowledge of the nature of $d_{t}$.

It is also possible to apply the test in the presence of trends, and in
general when (\ref{integrable}) does not hold. In this case, it is necessary
to detrend the data first, by estimating $d_{t}$ via, say, $\widehat{d}_{t}$%
. The test statistic $D_{T}$ is modified as%
\begin{equation*}
D_{T}^{\ast }=\frac{1}{T-p}\sum_{t=p+1}^{T}\frac{v_{p}}{v_{p}+Y_{t}^{2}},
\end{equation*}%
where%
\begin{equation*}
Y_{t}=\left\{ 
\begin{tabular}{l}
$X_{t}^{\ast }$ \\ 
$X_{t}^{\ast }-\widehat{d}_{t}$%
\end{tabular}%
\right. \text{ according as 
\begin{tabular}{l}
(\ref{integrable}) holds true \\ 
(\ref{integrable}) does not hold%
\end{tabular}%
.}
\end{equation*}%
We formalise our discussion in the following assumption.

\begin{assumption}
\label{as-8}It holds that (i) either (a) (\ref{integrable}) holds; or (b) it
holds that $E\left\vert \widehat{d}_{t}-d_{t}\right\vert ^{2}=O\left(
T^{-\epsilon _{1}}\right) $ for some $\epsilon _{1}>0$; (ii) when $E\ln
\left\vert \varphi +b_{0}\right\vert <0$, $P\left( |\bar{X}_{0}|=c\right) <1$
for all $c\in R$.
\end{assumption}

Part \textit{(i)}(b) of the assumption is very similar, in spirit, to
Assumption 5 in \citet{kapetanios2007} and it essentially requires that $%
\widehat{d}_{t}$ be a consistent estimator of the (trend) function $d_{t}$.
Although several choices are possible, we refer to the contribution by %
\citet{yao2004} on the estimation of trend functions in the presence of
heavy tails, where MSE consistency is still ensured; note that we do not
need $\epsilon _{1}$ to be any special value, as long as the MSE\ drifts to
zero at a polynomial rate. Part \textit{(ii)} strengthens Assumption \ref%
{as-2}, and it is again a non-degeneracy condition.

It holds that

\begin{theorem}
\label{dt-deterministics}Under Assumptions \ref{as-1}-\ref{p(T)} and \ref%
{as-8}, equations (\ref{th2.1-a}) and (\ref{th2.1-b}) hold.
\end{theorem}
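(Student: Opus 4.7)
My plan is to reduce Theorem \ref{dt-deterministics} to the corresponding statements of Theorem \ref{dt-tilde-convergence} applied to the latent process $X_t$, by showing that the contribution of the deterministic component (case (i)(a)) or of the estimation error (case (i)(b)) is asymptotically negligible at the scale at which $D_T^*$ is analysed. First I would decompose $Y_t = X_t + m_t$, with $m_t = d_t$ in case (a) and $m_t = d_t - \widehat{d}_t$ in case (b), so that
\begin{equation*}
Y_t^2 = X_t^2 + 2 m_t X_t + m_t^2, \qquad v_p = v_p^{X} + \frac{2}{p}\sum_{t=1}^{p} m_t X_t + \frac{1}{p}\sum_{t=1}^{p} m_t^2,
\end{equation*}
where $v_p^X := p^{-1}\sum_{t=1}^p X_t^2$. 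The cross and quadratic corrections in $m_t$ are controlled in case (a) by (\ref{integrable}) together with Cauchy--Schwarz and the $\nu$-th moment bound from Assumption \ref{as-1}, and in case (b) by $E|\widehat{d}_t - d_t|^2 = O(T^{-\epsilon_1})$ combined with Markov's inequality and Borel--Cantelli along a geometric subsequence; Assumption \ref{as-8}(ii) ensures that $v_p$ stays bounded away from zero almost surely, so that the denominator in $D_T^*$ is non-degenerate in the limit.

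For (\ref{th2.1-a}) under $H_0$, $\{X_t\}$ is strictly stationary and ergodic by Assumption \ref{as-2}. In case (b), $T^{-1}\sum_{t=p+1}^T m_t^2 \to 0$ a.s.\ and $T^{-1}\sum m_t X_t$ is likewise negligible, so $D_T^*$ is a vanishing perturbation of $D_T$ and the positive-constant limit of Theorem \ref{dt-tilde-convergence} transfers. In case (a), $Y_t$ fails to be strictly stationary, but writing
\begin{equation*}
D_T^* = \frac{1}{T-p}\sum_{t=p+1}^{T}\frac{v_p}{v_p + X_t^2 + 2 d_t X_t + d_t^2},
\end{equation*}
I would combine the ergodic theorem for $X_t$ with (\ref{integrable}) via a Ces\`aro-averaging argument (splitting the sum according to a truncation of $|d_t|$ and invoking dominated convergence inside the ratio, which is bounded by $1$) to conclude that $D_T^*$ has a strictly positive a.s.\ limit. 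Strict positivity uses Assumption \ref{as-8}(ii) to rule out $|\bar{X}_0 + c| = 0$ for any real $c$, which could otherwise force the limit to vanish along subsequences where $d_t$ is approximately constant and equal to such a $c$.

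For (\ref{th2.1-b}) under $H_A$, Theorem \ref{dt-tilde-convergence} gives $|X_t|\to\infty$ a.s.\ (exponentially under Assumption \ref{as-3}, polynomially under Assumption \ref{as-4}), whereas $m_t$ is either bounded-in-average (case (a)) or $o_{a.s.}(1)$ (case (b)); in either situation $|m_t|/|X_t| \to 0$ a.s.\ for all but finitely many $t$, so $Y_t^2 = X_t^2(1 + o(1))$ and $v_p = v_p^X(1 + o_{a.s.}(1))$, and the ratio $v_p/(v_p + Y_t^2)$ inherits the polynomial decay rate $O(T^{-\epsilon})$ from the corresponding $X_t$-based quantity. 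The main obstacle is the stationary side of case (i)(a): since $d_t$ can be genuinely time-varying (piecewise constant, trigonometric, and so on), $Y_t$ is not stationary and the ergodic theorem does not apply directly to $D_T^*$. The resolution is the Ces\`aro argument sketched above, which leverages the boundedness of the integrand by $1$ together with (\ref{integrable}) to control the aggregate contribution of $d_t$, and relies on Assumption \ref{as-8}(ii) to rule out degeneracy of the limiting constant.
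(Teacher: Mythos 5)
Your decomposition $Y_{t}=X_{t}+m_{t}$ and your treatment of the null in case \textit{(i)}(b) are broadly in the spirit of the paper's proof, which works with $d_{t}^{\ast }\in \{d_{t},\widehat{d}_{t}-d_{t}\}$, controls $\sum_{t}|d_{t}^{\ast }|^{\delta }$ via (\ref{integrable}) or Lemma \ref{horvath-trapani}, and then repeats the arguments of Lemmas \ref{ergodic} and \ref{limit-one}. The genuine gap is in your argument under $H_{A}$. You claim that $|X_{t}|\rightarrow \infty $ a.s.\ (``polynomially under Assumption \ref{as-4}''), hence $|m_{t}|/|X_{t}|\rightarrow 0$ and $Y_{t}^{2}=X_{t}^{2}(1+o(1))$. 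This fails precisely in the boundary case $E\ln |\varphi +b_{0}|=0$ with $b_{0}=0$ a.s.: a driftless random walk is recurrent, so $\lim \inf_{t}|X_{t}|$ is finite a.s.\ and $|X_{t}|$ falls below any fixed level infinitely often; consequently $|d_{t}|/|X_{t}|$ is unbounded along a subsequence even for constant $d_{t}$, and the pathwise domination $Y_{t}^{2}=X_{t}^{2}(1+o(1))$ is false. The paper never uses almost-sure divergence of $|X_{t}|$ under the alternative: the proof of (\ref{th2.1-b}) runs through the anti-concentration bounds $P\left( |X_{t}|\leq t^{\alpha }\right) $ of Lemmas \ref{7.4-ht} and \ref{berkes}, the expectation bound of Lemma \ref{expect-xt-ha}, and Lemma \ref{horvath-trapani} to convert the $L^{1}$ rate into an a.s.\ rate. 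The correct (and essentially one-line) way to accommodate deterministics under $H_{A}$ is the paper's observation that $P\left( |Y_{t}|\leq t^{\alpha }\right) \leq P\left( |X_{t}|\leq t^{\alpha }+|d_{t}^{\ast }|\right) $, which shows the same anti-concentration estimates hold for $Y_{t}$; your pathwise argument cannot be repaired in the recurrent case.

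A secondary weakness concerns the null when $E\overline{X}_{0}^{2}=\infty $ (e.g.\ the stationary STUR case), where the limit constant is $1$. There the mechanism is not that $v_{p}$ is ``bounded away from zero'' but that $v_{p}\rightarrow \infty $ a.s.\ (Lemma \ref{vp_nonergodic}, after controlling the cross term $p^{-1}\sum_{t}d_{t}^{\ast }\overline{X}_{t}$ by Cauchy--Schwarz), followed by the truncation bound $\left( T-p\right) ^{-1}\sum_{t}\overline{Y}_{t}^{2}/(C_{0}+\overline{Y}_{t}^{2})\leq C_{2}C_{0}^{-\delta /2}$ using a $\delta <2$ moment of $\overline{X}_{0}$ together with (\ref{integrable}) or Lemma \ref{horvath-trapani} for $\sum_{t}|d_{t}^{\ast }|^{\delta }$. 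Your ``Ces\`{a}ro plus dominated convergence'' sketch does not distinguish the finite- and infinite-variance branches of the null, and as written it does not explain why the average converges (to $1$) when the integrand has no stationary ergodic limit with finite normalisation.
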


Theorem \ref{dt-deterministics} entails that $D_{T}^{\ast }$ can be used in
the same way as $D_{T}$, obtaining the same results for the corresponding
test (under the same assumptions), and it can be generalised as we do with $%
D_{T}$\ in the next sections.

\subsection{Testing for the null of non-stationarity\label{swapped}}

In the spirit of the KPSS test (see \citealp{kpss}; see also the
contribution by \citealp{giraitis}), and of other constributions in the
context of nonlinear models (see e.g. \citealp{kapetanios2007}), all the
results developed so far are based on the hypothesis testing framework set
out in (\ref{fmwk}), where $H_{0}:$ $X_{t}$ is strictly stationary.

However, a test for 
\begin{equation}
\left\{ 
\begin{tabular}{ll}
$H_{0}^{\ast }:$ & $X_{t}\text{ is nonstationary}$ \\ 
$H_{A}^{\ast }:$ & $X_{t}\text{ is strictly stationary}$%
\end{tabular}%
\right.  \label{test-swap}
\end{equation}%
can be readily derived from the theory developed above.

Our testing approach requires having a test statistic which diverges under
the null, whilst being bounded under the alternative. On account of (\ref%
{lim-1}) and (\ref{lim-2}), one could use%
\begin{equation}
l_{T}^{\ast }=g\left( \frac{1}{\psi \left( T\right) D_{T}}\right) ,
\label{lt-star}
\end{equation}%
where $\psi \left( T\right) $ is defined in (\ref{psi-2}); by continuity, it
follows that%
\begin{equation*}
\begin{tabular}{ll}
$P\left\{ \omega :\lim_{T\rightarrow \infty }l_{T}^{\ast }=\infty \right\}
=1 $ & $\text{under }H_{0}^{\ast },$ \\ 
$P\left\{ \omega :\lim_{T\rightarrow \infty }l_{T}^{\ast }=0\right\} =1$ & $%
\text{under }H_{A}^{\ast },$%
\end{tabular}%
\end{equation*}%
from which a test, based on a randomised version of $l_{T}^{\ast }$, can be
constructed. Using the same algorithm as proposed above, we would obtain a
test statistic denoted by $\Theta _{R,T}^{\ast }$, whose asymptotics is in
the following theorem, reported without proof.

\begin{theorem}
\label{gammaswap}We assume that Assumptions \ref{as-1}-\ref{as-5} are
satisfied. Then, under $H_{0}^{\ast }$, as $\min \left( T,R\right)
\rightarrow \infty $ with 
\begin{equation}
\frac{R^{1/2}}{g\left( \frac{T^{\epsilon }}{\psi \left( T\right) }\right) }%
\rightarrow 0,  \label{restrictions}
\end{equation}%
for every $\epsilon>0$, it holds that $\Theta _{R,T}^{\ast }\overset{D^{\ast
}}{\rightarrow }N\left( 0,1\right) $ for almost all realisations of $\left\{
b_{t},e_{t},-\infty <t<\infty \right\} $. Under $H_{A}^{\ast }$, it holds
that as $\min \left( T,R\right) \rightarrow \infty $%
\begin{equation*}
\frac{G\left( 0\right) }{1-G\left( 0\right) }\frac{1}{R}\Theta _{R,T}^{\ast }%
\overset{P^{\ast }}{\rightarrow }\frac{\int_{-\infty }^{+\infty }\left[ I_{%
\left[ 0,\infty \right) }\left( u\right) -G\left( 0\right) \right]
^{2}dF\left( u\right) }{G\left( 0\right) \left[ 1-G\left( 0\right) \right] },
\end{equation*}%
for almost all realisations of $\left\{ b_{t},e_{t},-\infty <t<\infty
\right\} $.
\end{theorem}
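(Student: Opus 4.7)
The plan is to mirror the proof of Theorem \ref{theta}, exploiting that $l_T^{\ast}$ is defined through the reciprocal $[\psi(T) D_T]^{-1}$ rather than $\psi(T) D_T$, so that the roles of the two hypotheses swap while the randomisation scheme of Steps 1--4 is otherwise identical. The engine, as before, is Theorem \ref{dt-tilde-convergence}, which supplies the rate of $D_T$ under each regime; the conditional CLT/LLN arguments applied to the Bernoulli sequence $\{\zeta_j\}$ are then the same as those used for $\Theta_{R,T}$.

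Under $H_0^{\ast}$, equation (\ref{th2.1-b}) gives $D_T = O(T^{-\epsilon})$, so $[\psi(T) D_T]^{-1} \geq C_0 T^{\epsilon}/\psi(T)$ eventually, and monotonicity of $g$ forces $l_T^{\ast} \geq g(C_0 T^{\epsilon}/\psi(T)) \to \infty$. Conditioning on $\{b_t,e_t\}$ and fixing $\xi_j$, Assumption \ref{as-5}(i) ensures $\xi_j \neq 0$ almost surely, so $l_T^{\ast 1/2}\xi_j \to \pm\infty$ according to the sign of $\xi_j$; consequently $\zeta_j \to I(\xi_j \leq 0)$ for every $u$. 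Because this limiting indicator does not depend on $u$, the variables $\tilde\zeta_j = I(\xi_j \leq 0)$ form an i.i.d.\ Bernoulli$(G(0))$ sequence, and the Lindeberg CLT applied to $\vartheta_{R,T}(u)$ together with bounded/dominated convergence under Assumption \ref{as-5}(ii) delivers the stated conditional limit law. The rate restriction (\ref{restrictions}) plays precisely the role of (\ref{restriction}): since the finite-sample bias $|E^{\ast}(\zeta_j - \tilde\zeta_j)|$ decays at rate $1/l_T^{\ast}$, requiring $R^{1/2}$ to be dominated by $g(T^{\epsilon}/\psi(T))$ for every $\epsilon>0$ prevents this bias from accumulating faster than the $R^{1/2}$ normalisation and spoiling the centering.

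Under $H_A^{\ast}$, equation (\ref{th2.1-a}) gives $D_T \to C_0 > 0$, so $[\psi(T) D_T]^{-1} \to 0$ and $l_T^{\ast} \to g(0) = 0$. Then $l_T^{\ast 1/2}\xi_j \to 0$ for every fixed $\xi_j$, hence $\zeta_j \to I(0 \leq u) = I_{[0,\infty)}(u)$, which is deterministic given $u$. A conditional LLN then gives $R^{-1}\sum_{j=1}^{R} (\zeta_j - G(0)) \to I_{[0,\infty)}(u) - G(0)$ in $P^{\ast}$, so $|\vartheta_{R,T}(u)|^2$ grows like $R \cdot [I_{[0,\infty)}(u) - G(0)]^2/[G(0)(1-G(0))]$. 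Integrating against $F$, which is legitimate by Assumption \ref{as-5}(ii) and the uniform boundedness of $\zeta_j$, and inserting the normalisation $G(0)/(1-G(0))$ delivers the stated ratio.

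The main obstacle is the uniform-in-$u$ control needed to interchange limit and integral in (\ref{theta-maior}); this is already handled in the proof of Theorem \ref{theta} via the moment bound in Assumption \ref{as-5}(ii), and the argument transfers verbatim once one verifies that the divergence rate $g(T^{\epsilon}/\psi(T))$ of $l_T^{\ast}$ under $H_0^{\ast}$ is correctly matched by (\ref{restrictions}). All other steps -- conditioning on the sample, showing the exceptional null set has measure zero, and reducing the conditional CLT to its i.i.d.\ form -- are identical to those used for Theorem \ref{theta} and require no additional input.
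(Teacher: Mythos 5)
Your proposal is correct and follows exactly the route the paper intends: Theorem \ref{gammaswap} is stated without proof precisely because the argument is the proof of Theorem \ref{theta} with the roles of the two regimes of Theorem \ref{dt-tilde-convergence} swapped, the divergence rate $g\left( C_{0}T^{\epsilon }/\psi \left( T\right) \right) $ of $l_{T}^{\ast }$ replacing $g\left( C_{0}\psi \left( T\right) \right) $, and (\ref{restrictions}) playing the role of (\ref{restriction}) in killing the bias term $R^{1/2}\left[ G\left( u/l_{T}^{\ast }\right) -G\left( 0\right) \right] $. One minor remark: the argument you (correctly) give delivers a $\chi _{1}^{2}$ limit under $H_{0}^{\ast }$, since $\Theta _{R,T}^{\ast }$ is an integrated squared, asymptotically normal quantity, exactly as in (\ref{asy-null}); the $N\left( 0,1\right) $ appearing in the theorem statement is evidently a typo rather than a defect of your proof.
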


As for $\Theta _{R,T}$, (\ref{restrictions}) provides a selection rule for $%
R $. If, as suggested in the next section, one were to choose $g\left(
x\right) =\exp \left( x\right) -1$ - and $\psi \left( T\right) =\left( \ln
T\right) ^{\beta }$, as recommended above - then setting $R=T$ would satisfy
(\ref{restrictions}). Finally, note that instead of $X_{t}$ the same
arguments could be applied to $Y_{t}$.

\subsection{Modifications of the test statistic\label{modify}}

Heuristically, our test statistic is based on studying the variance of $%
u_{t}=e_{t}+b_{t}X_{t-1}$ in%
\begin{equation*}
X_{t}=\varphi X_{t-1}+u_{t}.
\end{equation*}%
The main intuition is that under stationarity $E\left(
u_{t}^{2}|I_{t-1}\right) $ should be bounded as $t$ elapses, whereas it
grows as $t\rightarrow \infty $ when $X_{t}$ is nonstationary. Owing to the
reasons discussed at the beginning of Section \ref{testing}, the testing
procedure uses a modified statistic, rather than $E\left(
u_{t}^{2}|I_{t-1}\right) $ directly.

In a similar vein, one use different moments of the error term $u_{t}$, say $%
E\left( \left\vert u_{t}\right\vert ^{\varsigma }|I_{t-1}\right) $ for some $%
\varsigma >0$: it can be expected that the stationarity/nonstationarity of $%
X_{t}$ will still entail the convergence/divergence of $E\left( \left\vert
u_{t}\right\vert ^{\varsigma }|I_{t-1}\right) $. From a technical point of
view, our arguments and our proofs differ depending on whether $E\left\vert
X_{t}\right\vert ^{2}=\infty $ or not, and therefore it can be envisaged
that, if relying upon $E\left( \left\vert u_{t}\right\vert ^{\varsigma
}|I_{t-1}\right) $, proofs will differ according as $E\left\vert
X_{t}\right\vert ^{\varsigma }=\infty $ or not - but apart from this, the
final results will be the same.

To formalise the discussion, consider%
\begin{equation}
D_{T}\left( \varsigma \right) =\frac{1}{T-p}\sum_{t=p+1}^{T}\frac{%
v_{p}\left( \varsigma \right) }{v_{p}\left( \varsigma \right) +\left\vert
X_{t}\right\vert ^{\varsigma }},  \label{dt-csi}
\end{equation}%
where $v_{p}\left( \varsigma \right) =p^{-1}\sum_{t=1}^{p}\left\vert
X_{t}\right\vert ^{\varsigma }$. The following result is reported without
proof.

\begin{theorem}
\label{dt-csi-theorem}Under Assumptions \ref{as-1}-\ref{p(T)}, equations (%
\ref{th2.1-a}) and (\ref{th2.1-b}) hold.
\end{theorem}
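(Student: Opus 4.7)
The plan is to mimic the proof of Theorem \ref{dt-tilde-convergence} almost verbatim, with the role of the exponent $2$ taken over by $\varsigma$. The structure of the statistic $D_T(\varsigma)$ is identical to that of $D_T$: both are averages of a bounded, scale-invariant functional of $X_t$ and a pilot moment estimator $v_p(\varsigma)$. Hence the two regimes can be handled by the same tools, only with the ``split condition'' moved from $EX_0^2<\infty$ vs. $=\infty$ to $E|\bar{X}_0|^{\varsigma}<\infty$ vs. $=\infty$.

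Under $H_0$, so that $E\ln|\varphi+b_0|<0$, I would first invoke the exponentially fast coupling of $X_t$ to the strictly stationary and ergodic $\bar{X}_t$ defined in \eqref{stationary}. In the ``thin-tailed'' sub-case $E|\bar{X}_0|^\varsigma<\infty$, an application of the ergodic theorem yields $v_p(\varsigma)\to E|\bar{X}_0|^\varsigma=:m_\varsigma\in(0,\infty)$ a.s., and a second application, now to the bounded functional $h_\alpha(x)=\alpha/(\alpha+|x|^\varsigma)$ with $\alpha$ tending to $m_\varsigma$, gives $D_T(\varsigma)\to E[m_\varsigma/(m_\varsigma+|\bar{X}_0|^\varsigma)]=:C_0\in(0,1)$. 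A continuous-mapping/uniform-in-$\alpha$ argument exactly as in the $\varsigma=2$ proof handles the fact that $v_p(\varsigma)$ is random but converges; the non-degeneracy hypothesis (Assumption \ref{as-2}(i)) guarantees $C_0>0$. In the ``heavy-tailed'' sub-case $E|\bar{X}_0|^\varsigma=\infty$ (which is non-empty when $\varphi=1$ and $\varsigma$ is larger than the tail index of $\bar{X}_0$), one shows that $v_p(\varsigma)$ is dominated by $p^{-1}\max_{t\le p}|X_t|^\varsigma$ and thus grows faster than a ``typical'' $|X_t|^\varsigma$ for $t>p$; passing to the ratio $v_p(\varsigma)/(v_p(\varsigma)+|X_t|^\varsigma)$ then gives $D_T(\varsigma)\to 1$ a.s., reproducing the $C_0=1$ phenomenon of Theorem \ref{dt-tilde-convergence}.

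Under $H_A$, I would split according to whether $E\ln|\varphi+b_0|>0$ or $=0$. In the explosive regime, Berkes \emph{et al.} (2009) gives $|X_t|\to\infty$ exponentially with rate $E\ln|\varphi+b_0|$, so $|X_t|^\varsigma$ grows exponentially with rate $\varsigma E\ln|\varphi+b_0|$. Since Assumption \ref{p(T)} forces $p=O(\ln\ln T)$, the pilot $v_p(\varsigma)$ is at worst of the same exponential order evaluated at time $p$, and writing
\begin{equation*}
\frac{v_p(\varsigma)}{v_p(\varsigma)+|X_t|^\varsigma}\le \frac{v_p(\varsigma)}{|X_t|^\varsigma}
\end{equation*}
yields a geometric bound in $t-p$ whose Cesàro average is $O(T^{-\epsilon})$ for some $\epsilon>0$. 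In the boundary regime $E\ln|\varphi+b_0|=0$, I would rely on Assumption \ref{as-4}: either the finite-variance unit-root case, where $|X_t|\sim \sqrt{t}$ a.s. up to slowly varying factors, or the heavy-tailed unit-root case, where $|X_t|$ grows polynomially at rate $t^{1/\gamma}$ via Lemma \ref{berkes} and the associated a.s. invariance principle. Either way $|X_t|^\varsigma$ diverges polynomially and dominates $v_p(\varsigma)$ (which grows at most like $p^{1/\gamma\cdot\varsigma}$, hence at most polylogarithmically in $T$), so the same $O(T^{-\epsilon})$ bound on $D_T(\varsigma)$ follows.

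The main obstacle, just as for Theorem \ref{dt-tilde-convergence}, will be the heavy-tailed sub-case of the null when $E|\bar{X}_0|^\varsigma=\infty$: one cannot apply the ergodic theorem directly to $|\bar{X}_t|^\varsigma$, and must instead control the extremes of $X_1,\ldots,X_p$ and show that they dominate the pilot average $v_p(\varsigma)$ in such a way that the bounded ratio converges to $1$. This is exactly the step that, in the $\varsigma=2$ case, uses the $C_0=1$ argument of the appendix, and it extends to general $\varsigma>0$ provided the tail-behaviour ingredients (Assumption \ref{as-4} and the anti-concentration bound in Lemma \ref{berkes}) are invoked with the exponent $\gamma$ replaced by $\gamma/\varsigma$ wherever it appears.
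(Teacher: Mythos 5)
Your overall plan is exactly what the paper intends: Theorem \ref{dt-csi-theorem} is stated without proof, and the discussion in Section \ref{modify} makes clear that the argument is meant to be a rerun of the proof of Theorem \ref{dt-tilde-convergence} with the exponent $2$ replaced by $\varsigma$ and the case split moved from $E\overline{X}_0^2<\infty$ versus $=\infty$ to $E|\overline{X}_0|^{\varsigma}<\infty$ versus $=\infty$. Your treatment of the null (ergodic theorem in the integrable sub-case; divergence of $v_p(\varsigma)$ and the ``limit equal to one'' argument in the non-integrable sub-case) matches Lemmas \ref{ergodic}, \ref{vp_nonergodic} and \ref{limit-one} step for step, including the role of the non-degeneracy assumption in guaranteeing $C_0>0$.

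One step in your treatment of the alternative would fail as written. In the boundary regime $E\ln|\varphi+b_0|=0$ with $b_0=0$ a.s., the claim that ``$|X_t|^{\varsigma}$ diverges polynomially and dominates $v_p(\varsigma)$'' is not true pathwise: a driftless random walk is recurrent, so $|X_t|$ does not tend to infinity almost surely, and the pointwise bound $v_p(\varsigma)/(v_p(\varsigma)+|X_t|^{\varsigma})\le v_p(\varsigma)/|X_t|^{\varsigma}$ is useless on the infinitely many $t$ at which $X_t$ returns near the origin. This is precisely why the paper never argues pathwise under $H_A$: it bounds $E\left[v_p/(v_p+X_t^2)\right]$ by conditioning on the events $\{v_p\ge T^k\}$ and $\{|X_t|\le t^{\alpha}\}$, controls the latter with the anti-concentration bounds of Lemmas \ref{7.4-ht} and \ref{berkes}, averages over $t$ (so that the sparse set of ``bad'' $t$'s is harmless in the Ces\`{a}ro mean), and only then converts the $O(T^{-\epsilon})$ bound on expectations into an almost sure rate via Lemma \ref{horvath-trapani} — a step your proposal does not mention. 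Your proof needs the same route for $D_T(\varsigma)$, with $P(|X_t|^{\varsigma}\le t^{\alpha'})$ rewritten as $P(|X_t|\le t^{\alpha'/\varsigma})$ and the exponents $k,\alpha$ re-tuned; this also corrects a small misstatement at the end of your proposal, since it is the threshold exponent $\alpha$ in Lemmas \ref{7.4-ht} and \ref{berkes} that gets rescaled by $1/\varsigma$, not the tail index $\gamma$. The purely explosive case can indeed be handled pathwise as you describe, because there $|X_t|\to\infty$ exponentially fast almost surely, but the expectation route covers it uniformly with the other cases.
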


Theorem \ref{dt-csi-theorem} entails that $D_{T}\left( \varsigma \right) $
has the same properties as $D_{T}$, and it can therefore be used, and
generalised, in the same way. Indeed, upon making sure that the artificial
samples used in the various randomisations are generated independently, it
would even be possible to run a meta-test by trying several values of $%
\varsigma $ and combining the outcomes e.g. according to Fisher's method. As
before, the same ideas could be applied to $Y_{t}$. Although this extension
is theoretically possible, unreported simulations show that $\varsigma =2$
affords the best results; choosing $\varsigma <2$ makes tests very
conservative (e.g. when $\varsigma =1$ the power is cut by two thirds),
whereas $\varsigma >2$ results in the opposite problem.

\section{Simulations and empirical illustration\label{montecarlo}}

This section contains two separate contributions. In Section \ref{mc} we
report some evidence from synthetic data on the empirical rejection
frequencies of our test in order to assess size and power; we analyse the
performance of decision rules based on $D_{\alpha ,S}$; and we discuss
possible guidelines for the implementation of the test statistic. In Section %
\ref{empirics}, we illustrate our approach, and in particular the use of $%
D_{\alpha ,S}$, through an application to several US\ macro aggregates.

\subsection{Monte Carlo evidence\label{mc}}

The design of the reported experiments is as follows.

We use (\ref{rca}) as a DGP; $b_{t}$ is generated as \textit{i.i.d.} $%
N\left( 0,\sigma _{b}^{2}\right) $, with $\sigma _{b}^{2}\in \left\{
0,0.1,0.25\right\} $ in order to consider the genuine AR case as well as
cases with random coefficients. According to the theory, it would also be
possible to consider a heavy tailed distribution for $b_{t}$; we noticed
through few trials that doing this does not change results in a decisive
way. We report results for three different specifications of $e_{t}$: 
\textit{i.i.d.} $N\left( 0,\sigma _{e}^{2}\right) $, \textit{i.i.d.} $t_{2}$
and \textit{i.i.d.} $t_{1}$, where $t_{k}$ denotes a Student t distribution
with $k$ degrees of freedom, so as to consider the cases of infinite
variance and infinite mean. In the Gaussian case, we have used $\sigma
_{e}^{2}=1$; we note however that the test is completely insensitive to the
value of $\sigma _{e}^{2}$, which suggests that the use of $v_{p}$ is very
effective at ensuring scale invariance.

We have used $\varphi \in \left\{ 0,0.5,0.75,0.95,1,1.05\right\} $; larger
values of $\varphi $, for the nonstationary cases, could also be considered
but in those cases - as can be expected - the test has unit power even for
very small samples.

\bigskip

The various combinations $\left\{ \varphi ,\sigma _{b}^{2}\right\} $ deserve
attention. All cases where $\varphi \leq 0.95$ entail that $X_{t}$ is
stationary: the corresponding empirical rejection frequencies represent the
size of the test. Also, upon computing the value of $E\ln \left\vert \varphi
+b_{0}\right\vert $, it can be noted that the two cases $\left\{ \varphi
,\sigma _{b}^{2}\right\} =\left\{ 1,0.1\right\} $ and $\left\{ \varphi
,\sigma _{b}^{2}\right\} =\left\{ 1,0.25\right\} $ correspond to a
stationary STUR; even in these cases the empirical rejection frequencies
represent the size, and it should be noted that, when $X_{t}$ is a
(stationary) STUR process, it has infinite variance irrespective of the
distributions of $b_{t}$ and $e_{t}$. Finally, again upon computing $E\ln
\left\vert \varphi +b_{0}\right\vert $, it turns out that the case $\left\{
\varphi ,\sigma _{b}^{2}\right\} =\left\{ 1.05,0.25\right\} $ is also an
instance of $X_{t}$ being stationary, again with infinite variance.

Thus, the nonstationary cases considered in our experiment are a pure
explosive case corresponding to $\left\{ \varphi ,\sigma _{b}^{2}\right\}
=\left\{ 1.05,0\right\} $, and the pure unit root case $\left\{ \varphi
,\sigma _{b}^{2}\right\} =\left\{ 1,0\right\} $. In the latter case, clearly 
$E\ln \left\vert \varphi +b_{0}\right\vert =0$ and therefore $X_{t}$ is on
the cusp between explosive and strictly stationary behaviour - note that we
are considering, by virtue of the several possible distributions of $e_{t}$,
also cases of random walk with infinite variance and mean, in a similar
spirit to \citet{cavaliere2016}. However, in our case the null is
stationarity, not unit root, and therefore the empirical rejection
frequencies represent the power of our test. Finally, we point out that the
case $\left\{ \varphi ,\sigma _{b}^{2}\right\} =\left\{ 1.05,0.1\right\} $
is of particular interest because $E\ln \left\vert \varphi +b_{0}\right\vert
=3.3\times 10^{-3}$ - that is, it is positive (and, therefore, $X_{t}$ is
nonstationary) but very small. Finally, in a separate experiment (the
outcomes are in Tables 2 and 4), we have considered several combinations of $%
\left\{ \varphi ,\sigma _{b}^{2}\right\} $ for which $E\ln \left\vert
\varphi +b_{0}\right\vert =0$, so as to evaluate the behaviour of the test
in those cases.

\bigskip

We now turn to describing the specifications of the test; as a general note,
their impact vanishes for large samples. Our reported experiments are based
on the following choices, which delivered the best results and are thus
recommended as guidelines to the applied user. We choose: 
\begin{eqnarray}
\psi \left( T\right) &=&\left( \ln T\right) ^{5/4},  \label{psi} \\
g\left( x\right) &=&\exp \left( \exp \left( x\right) -1\right) -1;
\label{gx}
\end{eqnarray}%
the choices in (\ref{psi}) and (\ref{gx}) are designed in order to ensure
that $g\left( \psi \left( T\right) D_{T}\right) \asymp \exp \left( T\right) $
under $H_{0}$, and that $g\left( \psi \left( T\right) D_{T}\right) $ drifts
to zero as $T\rightarrow \infty $ under $H_{A}$. Thus, the double
exponential in (\ref{gx}) serves the purpose of \textquotedblleft
divaricating\textquotedblright\ as much as possible the case where $\psi
\left( T\right) D_{T}$ diverges from the case where it does not; other
choices would also be possible, but (\ref{psi}) and (\ref{gx}) work well in
all cases considered. Based on (\ref{restriction}), we set $R=T$. Finally,
by Assumption \ref{p(T)}, in the computation of $v_{p}$ (which we carry out
with demeaned data), we need to choose $p=C_{0}\ln \ln T$ for some $C_{0}$,
which implies that $p$ does not vary too much as $T$ increases. We have used 
$C_{0}=2$, rounding $C_{0}\ln \ln T$ to the nearest, largest integer;
varying $p$ around this number does not affect the results anyway. Finally,
we have implemented the decision rule based on $D_{\alpha ,S}$ using $S=1000 
$; we note that increasing this number results in better outcomes, at the
(obvious) cost of a higher computational burden. Under each scenario, we
compute the percentage of times that the decision rule is in favour of $%
H_{0} $, using this as a measure of performance.

\bigskip

We generate $\left\{ \xi _{j}\right\} _{j=1}^{R}$\ as \textit{i.i.d.} $%
N\left( 0,1\right) $, and $u$ is $\left\{ -\sqrt{2},\sqrt{2}\right\} $ with
equal probability. Finally, the sample sizes have been chosen as $T\in
\left\{ 250,500,1000,2000\right\} $; the first $1000$ observations have been
discarded to avoid dependence on initial conditions. The number of
replications is set equal to $2000$; when evaluating the size, this entails
that empirical rejection frequencies have a confidence interval of $\left[
0.04,0.06\right] $.

\bigskip

\begin{center}
\textbf{[Insert Tables 1 and 2 somewhere here]}

\bigskip
\end{center}

Table 1 contains the empirical rejection frequencies for the test for $%
H_{0}:X_{t}$ is strictly stationary, using the combinations of $\left\{
\varphi ,\sigma _{b}^{2}\right\} $ indicated above and, in brackets, the
percentage of times that the decision rule based on $D_{\alpha ,S}$ leads to
accepting $H_{0}$. As can be noted, the test has the correct size for almost
all cases considered - exceptions are cases on the boundary (such as $%
\left\{ \varphi ,\sigma _{b}^{2}\right\} =\left\{ 1,0.1\right\} $, or $%
\left\{ \varphi ,\sigma _{b}^{2}\right\} =\left\{ 1.05,0.25\right\} $), but
even in these cases the size becomes correct as $T$ increases. The
distribution of the error term $e_{t}$ does not affect, in general, the
values of the empirical rejection frequencies, with few exceptions. As far
as power is concerned, in the pure unit root case - viz. when $\left\{
\varphi ,\sigma _{b}^{2}\right\} =\left\{ 1,0\right\} $ - the test exhibits
good power, which is found to be higher than $50\%$ whenever $T\geq 500$,
despite not being designed explicitly for that specific alternative
hypothesis; even in this case, the results are broadly similar for different
distributions of $e_{t}$. As a conclusion, the test seems to work well in
discerning between a genuinely unit root process, and a (stationary) STUR
process. The test is also powerful in the purely explosive case $\left\{
\varphi ,\sigma _{b}^{2}\right\} =\left\{ 1.05,0\right\} $, and has some
power also versus the \textquotedblleft boundary\textquotedblright\ case $%
\left\{ \varphi ,\sigma _{b}^{2}\right\} =\left\{ 1.05,0.1\right\} $; in
this case, the power is affected by the distribution of the error term for
small $T$, and it declines as the tails of the distribution of $e_{t}$
become heavier, but this seems to vanish as $T$ increases. Similar
considerations hold for $D_{\alpha ,S}$; note that, when data have heavy
tails, in the case $\left\{ \varphi ,\sigma _{b}^{2}\right\} =\left\{
0.95,0\right\} $ the procedure requires, in order to work sufficiently well, 
$T\geq 1000$.

As mentioned above, we have also considered a broader set of cases where $%
X_{t}$ is nonstationary, in which $E\ln \left\vert \varphi +b_{0}\right\vert
=0$. The power of our test versus these alternatives is in Table 2; the test
has very good power in all cases considered, the only possible exception
being the case with normally distributed errors and $T=250$, but even in
that case the power picks up for larger $T$. Note the major increase in
power when the error term $e_{t}$ has a Student t distribution; this does
not seem to be sensitive to the degrees of freedom of the distribution. Note
also the excellent performance of $D_{\alpha ,S}$ for large $T$.

\bigskip

We have also considered, based on the discussion in Section \ref{swapped},
testing for $H_{0}:X_{t}$ is nonstationary. For brevity, we did not
experiment with $D_{\alpha ,S}$; otherwise, the design of the simulations,
and the specification of the test statistic, are carried out exactly as in
the previous case. As suggested in Section \ref{swapped}, we use%
\begin{equation*}
l_{T}^{\ast }=g\left( \frac{1}{\psi \left( T\right) D_{T}}\right) ,
\end{equation*}%
in the construction of the test.

\begin{center}
\bigskip

\textbf{[Insert Tables 3 and 4 somewhere here]}

\bigskip
\end{center}

The test has the correct size in all cases considered. The power versus
stationarity is strong when $X_{t}$ is \textquotedblleft very
stationary\textquotedblright\ - i.e. when $\varphi =0$ or $0.5$, and it is
anyway above $50\%$ in the \textquotedblleft less
stationary\textquotedblright\ case of having $\varphi =0.95$ when $T\geq 500$%
. Similarly, Table 4 shows that the test has the correct size even in the
nonstationary, but boundary, case $E\ln \left\vert \varphi +b_{0}\right\vert
=0$. We note that, in all cases considered, the distribution of $e_{t}$ does
not seem to play a role on the final results.

\subsection{Empirical illustration\label{empirics}}

The purpose of this section is primarily to illustrate the use of $Q\left(
\alpha \right) $ and of the decision rule based on $D_{\alpha ,S}$. We apply
our procedure to several US macroeconomic aggregates (similarly to %
\citealp{hillpeng2014}). We consider the logs of: real GDP, M2 (as a measure
of the aggregate money supply), CPI (and we also consider inflation, defined
as the log-difference of CPI), and the Industrial Production index. We also
apply our methodology to the (untransformed) rate of unemployment. Finally,
we also consider the 3-months T-bill, inspired by the contribution by %
\citet{rahbek}.

\bigskip

The decision rule (\ref{decision}) is applied in order to choose between%
\begin{equation*}
\left\{ 
\begin{tabular}{ll}
$H_{0}:$ & $X_{t}\text{ is strictly stationary}$ \\ 
$H_{A}:$ & $X_{t}\text{ is nonstationary}$%
\end{tabular}%
\right.
\end{equation*}%
As a further illustration, we apply the test to first-differenced data, in
the cases where a series is found to be nonstationary. Finally, we also use (%
\ref{decision}) to decide between%
\begin{equation*}
\left\{ 
\begin{tabular}{ll}
$H_{0}:$ & $X_{t}\text{ is nonstationary}$ \\ 
$H_{A}:$ & $X_{t}\text{ is strictly stationary}$%
\end{tabular}%
,\right.
\end{equation*}

again considering data in levels and (if need be) in first difference. As
far as the implementation is concerned, a note on deterministics is in
order. We know from Section \ref{deterministics} that our test can, in
general, be applied to non-zero mean data; the test can also be applied in
the presence of trends (albeit only after detrending), which should make our
procedure particularly suitable for macroeconomic aggregates. There has been
much debate on the presence (or absence) of a linear trend in macroeconomic
aggregates, and, in general, as to whether macroeconomic series are better
characterised as having a linear trend or a unit root (the so-called
\textquotedblleft uncertain unit root\textquotedblright ). Starting at least
from the seminal paper by \citet{nelson}, various contributions have
questioned whether such series ought to be modelled as having a unit root or
a linear trend. GDP is a prime example of this debate, and it has been the
subject of several studies: we refer to the classical paper by %
\citet{rudebusch}, and also to \citet{murray} and the extensive literature
review therein. Similarly, some studies seem to suggest that trends may be
present in the CPI (see \citealp{beechey}), and that money aggregates also
may have trends (\citealp{brand}). Whilst the empirical exercise in this
paper is not aimed at addressing the \textquotedblleft uncertain unit
root\textquotedblright\ debate in a comprehensive way, we have taken this
literature into account by detrending all the series using the GLS
detrending scheme proposed in \citet{elliot}.

\begin{center}
\bigskip

\textbf{[Insert Table 5 somewhere here]}

\bigskip
\end{center}

In the computation of (\ref{decision}), we have used $Q\left( 0.05\right) $,
setting $S=5000$. We have used the same specifications as described in the
previous section, namely: $R=T$; $p=5$; $\psi \left( T\right) =\left( \ln
T\right) ^{5/4}$; $g\left( x\right) =\exp \left( \exp \left( x\right)
-1\right) -1$; and we compute $v_{p}$ using demeaned data. In Table 6, we
also report the estimated values of $\varphi $ and $\sigma _{b}^{2}$
computed using the WLS estimator studied in \citet{HT16}. Based on (\ref%
{decision}), and on the fact that $\alpha =0.05$ and $S=5000$, the decision
rule is based on not rejecting $H_{0}$ whenever%
\begin{equation}
Q\left( 0.05\right) \geq 0.9436,  \label{strong2}
\end{equation}%
rejecting otherwise. Results are reported in Table 6, where we have also
reported, for illustration purposes, the outcomes of the unit root test by %
\citet{elliot} and of the KPSS test (see \citealp{kpss}), which we have
carried out for those series which do not have a random autoregressive root.

\bigskip

\begin{center}
\textbf{[Insert Table 6 somewhere here]}

\bigskip
\end{center}

As a preliminary comment, based on the test for no randomness ($H_{0}:\sigma
_{b}^{2}=0$) developed by \citet{HT16}, two series (Industrial Production
and unemployment) are found to have a random autoregressive root, whereas
the others do not. We have (very) heuristically checked these two series, by
calculating the value of $E\ln \left\vert \varphi +b_{0}\right\vert $, using 
$\widehat{\varphi }$ as face value and assuming that $b_{0}$ is Gaussian; in
both cases, $E\ln \left\vert \varphi +b_{0}\right\vert $ turns out to be
very close to zero. As far as testing for stationarity is concerned, results
are quite clear-cut: all series are found to be nonstationary. This is
perfectly in line with the findings from applying the other unit root tests
reported in the table - although of course these can only be applied to the
series with a deterministic autoregressive root. Interestingly, all series
become stationary after first differencing: this cannot be taken for
granted, since some series have a random autoregressive coefficient and
therefore there is no guarantee that first differencing may induce
stationarity - see \citet{leybourne1996}. Exactly the same pattern of
results is found when swapping the null and the alternative hypotheses.

Finally, we note that, as a robustness check, we have experimented with
different specifications for our procedure - e.g. varying $R$ from $T/2$ to $%
2T$; increasing $p$ to $4\ln \ln T$; and trying $S=1000$ and $S=10000$. In
all these cases, we noted that results were entirely unchanged compared to
the ones in Table 6, suggesting that the strong rule based on (\ref{decision}%
) is quite robust to different choices of user-defined parameters.

\section{Conclusions\label{conclusions}}

In this paper, we have developed a test for the null of strict stationarity
applied to a RCAR(1) model. Our testing approach can be applied to a wide
variety of situations, without requiring any modification or any prior
knowledge. Chiefly, the test is still usable if the autoregressive root is
not random, i.e. in the case of an AR(1) specification. Also, the test does
not require (neither as assumptions, nor for the purpose of the actual
implementation) the existence of the variance of $X_{t}$, or of virtually
any moment. Finally, the test can be applied in the presence of
deterministics: even in this case (with the exception of having trends), the
implementation of the test does not require any prior analysis. To the best
of our knowledge, no existing test has such a level of generality.

Technically, the test is based on the (almost sure) limiting behaviour of a
statistic which either diverges to positive infinity or drifts to zero
without having any randomness, we propose to use it as part of a
randomisation procedure. Numerical evidence shows that the test performs
very well, also showing very promising results in the cases where $X_{t}$ is
borderline between being stationary and non-stationary - that is, in cases
where $E\ln \left\vert \varphi +b_{0}\right\vert $ is either positive or
negative, but \textquotedblleft small\textquotedblright .

\section{Technical Lemmas\label{lemmas}}

The first few lemmas are for the case $E\ln \left\vert \varphi
+b_{0}\right\vert <0$. The first lemma is an immediate consequence of
Assumption \ref{as-2}, and we therefore report it without proof.

\begin{lemma}
\label{positive}Under Assumption \ref{as-2}, if $E\ln \left\vert \varphi
+b_{0}\right\vert <0$ with $\left\vert \varphi \right\vert <1$, it holds
that $E\left\vert \overline{X}_{0}\right\vert ^{\delta }>0$, for all $\delta
\geq 0$.
\end{lemma}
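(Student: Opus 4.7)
My plan is very short, because the statement is essentially a restatement of Assumption \ref{as-2}(i) at the level of (possibly infinite) moments, and the paper itself remarks that the result is \emph{immediate}.

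First I would confirm that $\overline{X}_0$, as given in (\ref{stationary}), is a bona fide random variable under the hypotheses of the lemma. Under $E\ln|\varphi + b_0| < 0$ together with the fractional moment condition in Assumption \ref{as-1}(v), a classical argument (Brandt / Nicholls--Quinn) shows that
\[
\overline{X}_0 \;=\; \sum_{s=-\infty}^{0} e_s \prod_{z=s+1}^{0}(\varphi + b_z)
\]
converges absolutely almost surely: apply the SLLN to $|s|^{-1}\sum_{z=s+1}^0 \ln|\varphi + b_z|$ to obtain geometric decay of the products in $|s|$, take a small power $\nu$ of each summand, and use Markov together with Borel--Cantelli. (Under $|\varphi|<1$, which is assumed, the same conclusion follows even more directly.) So $|\overline{X}_0|^\delta$ is a well-defined a.s.~finite or $+\infty$-valued random variable.

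For $\delta = 0$ the convention $|x|^0 \equiv 1$ gives $E|\overline{X}_0|^0 = 1 > 0$ trivially. For $\delta > 0$, Assumption \ref{as-2}(i) yields $P(|\overline{X}_0| > 0) > 0$. Decomposing
\[
\{|\overline{X}_0| > 0\} \;=\; \bigcup_{n=1}^{\infty} \{|\overline{X}_0| > 1/n\}
\]
and using continuity of probability from below, there exists $n^{*}$ with $P(|\overline{X}_0| > 1/n^{*}) > 0$. Then, by non-negativity of $|\overline{X}_0|^\delta$,
\[
E|\overline{X}_0|^{\delta} \;\geq\; E\!\left[\,|\overline{X}_0|^{\delta}\, I_{\{|\overline{X}_0| > 1/n^{*}\}}\right] \;\geq\; (n^{*})^{-\delta}\, P(|\overline{X}_0| > 1/n^{*}) \;>\; 0,
\]
which is the desired conclusion.

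There is no genuine obstacle to this argument: the assertion is essentially the elementary fact that a non-negative random variable which is not almost surely zero must have a strictly positive (possibly infinite) expectation of any non-negative power. The only technical subtlety is the a.s.~existence of $\overline{X}_0$ as a random variable, and this is already covered by the standard arguments sketched above together with the moment condition in Assumption \ref{as-1}(v).
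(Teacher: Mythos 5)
Your argument is correct and is precisely the elementary reasoning the paper has in mind: the paper states Lemma \ref{positive} without proof as an ``immediate consequence'' of Assumption \ref{as-2}, and your decomposition of $\{|\overline{X}_0|>0\}$ into the events $\{|\overline{X}_0|>1/n\}$ followed by the Markov-type lower bound is the canonical way to make that immediacy explicit. The one remark worth keeping is your observation that the conclusion holds even when $E|\overline{X}_0|^{\delta}=\infty$, which is consistent with the paper only assuming fractional moments of $\overline{X}_0$.
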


\begin{lemma}
\label{vp_ergodic}Under Assumption \ref{as-1}, if $E\ln \left\vert \varphi
+b_{0}\right\vert <0$ it holds that%
\begin{equation*}
\sum_{t=1}^{T}\left\vert \left\vert X_{t}\right\vert ^{\kappa }-\left\vert 
\overline{X}_{t}\right\vert ^{\kappa }\right\vert =O\left( 1\right) ,
\end{equation*}%
for any $\kappa >0$.
\end{lemma}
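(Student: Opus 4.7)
The plan is to reduce everything to controlling $\Delta_t := X_t - \overline{X}_t$, which satisfies the linear homogeneous recursion $\Delta_t = (\varphi + b_t)\Delta_{t-1}$, and therefore
\begin{equation*}
\Delta_t = (X_0 - \overline{X}_0)\prod_{z=1}^{t}(\varphi + b_z).
\end{equation*}
Since $\{b_z\}$ is i.i.d.\ with $E\ln|\varphi+b_0|=:-\gamma<0$ (well-defined thanks to $E|b_0|^{\nu}<\infty$ in Assumption~\ref{as-1}), the SLLN gives $t^{-1}\sum_{z=1}^{t}\ln|\varphi+b_z|\to -\gamma$ a.s., so for any $\rho\in(e^{-\gamma},1)$ one has $|\Delta_t|\leq C(\omega)\rho^{t}$ eventually, almost surely.

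Next I would split on $\kappa$. For $0<\kappa\leq 1$, subadditivity of $x\mapsto x^{\kappa}$ on $[0,\infty)$ yields $\bigl||X_t|^{\kappa}-|\overline{X}_t|^{\kappa}\bigr|\leq |\Delta_t|^{\kappa}$; the latter is bounded by $C(\omega)\rho^{\kappa t}$ eventually, so $\sum_{t\geq 1}|\Delta_t|^{\kappa}<\infty$ a.s. and the conclusion is immediate.

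For $\kappa>1$, the mean value theorem applied to $x\mapsto x^{\kappa}$ gives
\begin{equation*}
\bigl||X_t|^{\kappa}-|\overline{X}_t|^{\kappa}\bigr|\;\leq\;\kappa\bigl(|X_t|^{\kappa-1}+|\overline{X}_t|^{\kappa-1}\bigr)|\Delta_t|\;\leq\;C_{\kappa}\bigl(|\overline{X}_t|^{\kappa-1}|\Delta_t|+|\Delta_t|^{\kappa}\bigr),
\end{equation*}
where I used $|X_t|\leq|\overline{X}_t|+|\Delta_t|$ and the inequality $(a+b)^{\kappa-1}\leq C_{\kappa}(a^{\kappa-1}+b^{\kappa-1})$. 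The second summand is handled exactly as in Case~1. For the first, the hard part, I would invoke the standard moment contraction fact: because $E\ln|\varphi+b_0|<0$ and $E|\varphi+b_0|^{\nu}<\infty$ for some $\nu>0$, the function $\delta\mapsto E|\varphi+b_0|^{\delta}$ is continuous and equals $1$ at $\delta=0$ with negative derivative there, so $E|\varphi+b_0|^{\delta}<1$ for some small $\delta>0$; combined with $E|e_0|^{\delta}<\infty$, this yields $E|\overline{X}_0|^{\delta}<\infty$. Stationarity and Borel--Cantelli then give $|\overline{X}_t|=o(t^{1/\delta})$ a.s., hence $|\overline{X}_t|^{\kappa-1}|\Delta_t|=o\bigl(t^{(\kappa-1)/\delta}\rho^{t}\bigr)$, which is summable a.s.

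The main obstacle is the case $\kappa>1$, where one must keep $|\overline{X}_t|^{\kappa-1}$ from spoiling the exponential decay of $|\Delta_t|$ despite the fact that the paper's assumptions provide only an arbitrarily small moment $\nu$ on $b_0,e_0$. The fix is precisely the small-$\delta$ moment argument above, which provides polynomial control on $|\overline{X}_t|$ that is absorbed by the geometric factor $\rho^t$. Collecting all the bounds, $\sum_{t=1}^{T}\bigl||X_t|^{\kappa}-|\overline{X}_t|^{\kappa}\bigr|$ is dominated a.s.\ by a convergent series, proving the claim.
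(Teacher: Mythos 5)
Your proof is correct and follows essentially the same route as the paper: split on $\kappa\lessgtr 1$ using H\"{o}lder/Lipschitz-type bounds, exploit the a.s.\ exponential decay of $X_t-\overline{X}_t$ (which the paper cites from Horv\'{a}th and Trapani rather than rederiving via the SLLN), and absorb the polynomial a.s.\ growth of $|\overline{X}_t|^{\kappa-1}$ obtained from the small-moment bound $E|\overline{X}_0|^{\delta}<\infty$ (the paper's citation of Lemma 2 of Aue et al.) plus Borel--Cantelli. The only cosmetic caveat is that $E\ln|\varphi+b_0|$ may equal $-\infty$ under Assumption \ref{as-1}, but this only accelerates the decay of $\Delta_t$, so your argument goes through unchanged.
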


\begin{proof}
Using Lipschitz and H\"{o}lder continuity, we have%
\begin{equation*}
\left\vert \left\vert X_{t}\right\vert ^{\kappa }-\left\vert \overline{X}%
_{t}\right\vert ^{\kappa }\right\vert \leq \left\{ 
\begin{array}{c}
C_{0}\left( \left\vert \overline{X}_{t}\right\vert ^{\kappa -1}+\left\vert
X_{t}\right\vert ^{\kappa -1}\right) \left\vert X_{t}-\overline{X}%
_{t}\right\vert  \\ 
C_{0}\left\vert X_{t}-\overline{X}_{t}\right\vert ^{\kappa }%
\end{array}%
\right. \text{ according as }%
\begin{array}{c}
\kappa \geq 1 \\ 
\kappa <1%
\end{array}%
\text{. }
\end{equation*}%
\citet{HT16} show that $\left\vert X_{t}-\overline{X}_{t}\right\vert
=O\left( e^{-C_{0}t}\right) $ for some $C_{0}>0$. Also, by Lemma 2 in %
\citet{aue06}, there exists a $\delta ^{\prime }>0$ such that $E\left\vert 
\overline{X}_{t}\right\vert ^{\delta ^{\prime }}<\infty $. Hence, by the
Borel-Cantelli Lemma (see e.g.\ \citealp{chow2012}, Corollary 3 on p. 90), $%
\left\vert \overline{X}_{t}\right\vert =O\left( \left\vert t\right\vert
^{1/\delta ^{\prime }}\left( \ln t\right) ^{\left( 1+\epsilon \right)
/\delta ^{\prime }}\right) $. The desired result now follows immediately by
putting everything together. 
\end{proof}

We now distinguish the cases $E\overline{X}_{0}^{2}=\infty $ and $E\overline{%
X}_{0}^{2}<\infty $. In the former case, the following lemmas are needed.

\begin{lemma}
\label{vp_nonergodic}Under Assumptions \ref{as-1} and \ref{as-2}, with $E\ln
\left\vert \varphi +b_{0}\right\vert <0$ and $\varphi ^{2}+Eb_{0}^{2}\geq 1$%
,\ it holds that, as $T\rightarrow \infty $, $v_{p}\rightarrow \infty $ a.s.
\end{lemma}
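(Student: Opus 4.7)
The plan is to reduce the claim to an ergodic-theorem statement about the stationary solution $\overline{X}_t$, and then exploit the fact that, in this regime, $E\overline{X}_0^2=\infty$. First I would apply Lemma \ref{vp_ergodic} with $\kappa=2$: since $\sum_{t=1}^{p}\bigl||X_t|^2-|\overline{X}_t|^2\bigr|=O(1)$, dividing by $p$ gives
\[
v_p=\frac{1}{p}\sum_{t=1}^{p}X_t^{2}=\frac{1}{p}\sum_{t=1}^{p}\overline{X}_t^{2}+O(p^{-1}),
\]
so it suffices to prove that $\overline{v}_p:=p^{-1}\sum_{t=1}^{p}\overline{X}_t^{2}\to\infty$ a.s. as $p\to\infty$ (which is the case since $p=p(T)\to\infty$ by Assumption \ref{p(T)}).

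Next I would invoke stationarity and ergodicity. By (\ref{stationary}), $\overline{X}_t$ is a measurable function of $\{(b_s,e_s):s\le t\}$, and by Assumption \ref{as-1} the driving sequence $\{(b_t,e_t)\}$ is i.i.d.; hence $\{\overline{X}_t^{2}\}$ is a stationary and ergodic sequence of non-negative random variables. Under the standing hypothesis $\varphi^{2}+Eb_0^{2}\ge 1$, the Quinn (1982) criterion cited immediately after (\ref{stationary}) rules out finite second moment, so $E\overline{X}_0^{2}=\infty$.

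To conclude, I would apply Birkhoff's ergodic theorem to the truncations $\overline{X}_t^{2}\wedge K$, which are integrable: for every fixed $K>0$,
\[
\liminf_{p\to\infty}\overline{v}_p\;\ge\;\lim_{p\to\infty}\frac{1}{p}\sum_{t=1}^{p}\bigl(\overline{X}_t^{2}\wedge K\bigr)=E\bigl(\overline{X}_0^{2}\wedge K\bigr)\quad\text{a.s.}
\]
By monotone convergence, $E(\overline{X}_0^{2}\wedge K)\uparrow E\overline{X}_0^{2}=\infty$ as $K\to\infty$, whence $\overline{v}_p\to\infty$ a.s. Combined with the first display this yields $v_p\to\infty$ a.s., as required.

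The only mildly delicate point is ensuring that the exceptional null set does not depend on $K$; this is handled by taking $K$ along a countable sequence (e.g. $K\in\mathbb{N}$) and intersecting the corresponding null sets, which is routine. Everything else is a direct application of Lemma \ref{vp_ergodic}, ergodicity of i.i.d.-driven functionals, and the already-quoted Quinn (1982) characterisation of $E\overline{X}_0^{2}<\infty$.
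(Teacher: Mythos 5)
Your proposal is correct and follows essentially the same route as the paper: reduce to $\overline{v}_p$ via Lemma \ref{vp_ergodic}, note $E\overline{X}_0^2=\infty$ from the Quinn (1982) criterion, and combine the ergodic theorem applied to truncations $\overline{X}_t^2\wedge K$ with monotone convergence. If anything, your $\liminf$ formulation and explicit handling of the $K$-dependent null sets is a slightly more careful write-up of the same argument.
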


\begin{proof}
Let $\overline{v}_{p}=p^{-1}\sum_{t=1}^{p}\overline{X}_{t}^{2}$. By\ Lemma %
\ref{vp_ergodic}, it holds that $v_{p}-\overline{v}_{p}=O\left(
p^{-1}\right) $. Further, under the assumptions of the lemma, it holds that $%
E\overline{X}_{0}^{2}=\infty $ (\citealp{quinn1982}). Let now $\overline{v}%
_{p}\left( C_{0}\right) =p^{-1}\sum_{t=1}^{p}\min \left\{ \overline{X}%
_{t}^{2},C_{0}^{2}\right\} $, where $C_{0}>0$. By construction, $\overline{v}%
_{p}=\lim \sup_{C_{0}\rightarrow \infty }\overline{v}_{p}\left( C_{0}\right) 
$ $=$ $\lim \sup_{C_{0}\rightarrow \infty }E\min \left\{ \overline{X}%
_{0}^{2},C_{0}^{2}\right\} $ a.s., where the last passage follows from the
ergodic theorem. Also, $\lim \sup_{C_{0}\rightarrow \infty }E\min \left\{ 
\overline{X}_{0}^{2},C_{0}^{2}\right\} $ $=$ $\lim_{C_{0}\rightarrow \infty
}E\min \left\{ \overline{X}_{0}^{2},C_{0}^{2}\right\} $ $=$ $E\overline{X}%
_{0}^{2}$, by monotone convergence, which proves the lemma. 
\end{proof}

\begin{lemma}
\label{limit-one}Under Assumptions \ref{as-1} and \ref{as-2}, with $E\ln
\left\vert \varphi +b_{0}\right\vert <0$ and $\varphi ^{2}+Eb_{0}^{2}\geq 1$%
,\ it holds that, as $T\rightarrow \infty $, $D_{T}\rightarrow 1$ a.s.
\end{lemma}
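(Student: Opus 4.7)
The plan is to rewrite
\[
1 - D_T \;=\; \frac{1}{T-p} \sum_{t=p+1}^{T} \frac{X_t^2}{v_p + X_t^2},
\]
and show this drifts to zero almost surely; combined with the trivial bound $D_T \leq 1$, this yields $D_T \to 1$. The heuristic is that Lemma~\ref{vp_nonergodic} guarantees $v_p \to \infty$, so for each \emph{fixed} $t$ the summand vanishes; the task is to upgrade this pointwise fact to the Ces\`aro average.

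The first reduction is to replace $X_t$ by its stationary counterpart $\overline{X}_t$. A direct algebraic manipulation gives
\[
\left| \frac{X_t^2}{v_p + X_t^2} - \frac{\overline{X}_t^2}{v_p + \overline{X}_t^2} \right| \;=\; \frac{v_p\, |X_t^2 - \overline{X}_t^2|}{(v_p + X_t^2)(v_p + \overline{X}_t^2)} \;\leq\; \frac{|X_t^2 - \overline{X}_t^2|}{v_p},
\]
so, summing over $t$ and invoking Lemma~\ref{vp_ergodic} with $\kappa = 2$, the cumulative replacement error is bounded by $C_0/((T-p)v_p) = o(1)$. Hence it suffices to control $(T-p)^{-1} \sum_{t=p+1}^{T} \overline{X}_t^2/(v_p + \overline{X}_t^2)$.

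For a threshold $K>0$, I would split
\[
\frac{\overline{X}_t^2}{v_p + \overline{X}_t^2} \;\leq\; \frac{K}{v_p}\, I\{\overline{X}_t^2 \leq K\} + I\{\overline{X}_t^2 > K\}.
\]
The first piece contributes at most $K/v_p$, which vanishes as $T \to \infty$ by Lemma~\ref{vp_nonergodic}. The Ces\`aro average of the second piece converges a.s.\ to $P(\overline{X}_0^2 > K)$ by the ergodic theorem applied to $\{\overline{X}_t\}$ (which is strictly stationary and ergodic as a measurable functional of the two-sided i.i.d.\ sequence $\{b_t,e_t\}$), together with $p/T \to 0$ from Assumption~\ref{p(T)}. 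Since $P(\overline{X}_0^2 > K)\to 0$ as $K\to\infty$, the standard $\varepsilon$-argument finishes the job: fix $\varepsilon>0$, pick $K$ with $P(\overline{X}_0^2 > K) < \varepsilon/2$, then take $T$ large enough that $K/v_p < \varepsilon/4$ and the ergodic average is within $\varepsilon/4$ of its limit.

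The only real technical point is the bookkeeping of the iterated limit in $K$ and $T$---ensuring $K$ is selected after $\varepsilon$ but before $T$ is sent to infinity. No input beyond Lemmas~\ref{vp_ergodic}--\ref{vp_nonergodic}, the ergodic theorem for $\{\overline{X}_t\}$, and the elementary algebraic identity above is required.
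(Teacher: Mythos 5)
Your proof is correct, and its skeleton coincides with the paper's: both arguments pass from $X_t$ to the stationary solution $\overline{X}_t$ via Lemma~\ref{vp_ergodic}, exploit $v_p\rightarrow\infty$ from Lemma~\ref{vp_nonergodic} to replace $v_p$ by a fixed threshold, apply the ergodic theorem to a bounded functional of $\overline{X}_t$, and finally send the threshold to infinity, with the order of the limits handled in the same way (threshold chosen after $\varepsilon$, before $T$). The one place where you genuinely deviate is the final step. The paper bounds the ergodic limit $E\bigl(\overline{X}_0^2/(C_0+\overline{X}_0^2)\bigr)$ by $C_1C_0^{-\delta'/2}$, which requires the existence of a fractional moment $E|\overline{X}_0|^{\delta'}<\infty$ (Lemma 2 of \citet{aue06}); you instead truncate with the indicator $I\{\overline{X}_0^2>K\}$, so that the ergodic limit of the tail piece is $P(\overline{X}_0^2>K)$, and all you need is that $\overline{X}_0$ is a.s.\ finite. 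Your variant is slightly more elementary --- it dispenses with the fractional-moment input entirely, at no cost for the qualitative conclusion --- while the paper's version yields an explicit rate in the threshold, which is not needed for this lemma. The remaining ingredients (the algebraic identity for the replacement error, the use of $p/T\rightarrow 0$ from Assumption~\ref{p(T)} when shifting the ergodic average, and ergodicity of $\{\overline{X}_t\}$ as a measurable functional of the i.i.d.\ innovations) are all handled correctly.
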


\begin{proof}
We begin by showing that $v_{p}>0$ a.s.; indeed, by the generalised mean
inequality $v_{p}\geq \left( p^{-1}\sum_{t=1}^{p}X_{t}^{\delta }\right)
^{2/\delta }$; using Lemma \ref{vp_ergodic} and the ergodic theorem, we get
that $v_{p}\geq \left( E\overline{X}_{0}^{\delta }\right) ^{2/\delta }$,
which is strictly positive by Lemma \ref{positive} for any $\delta \in
\left( 0,2\right) $. 

Note now that%
\begin{equation}
\sum_{t=p+1}^{T}\left\vert \frac{v_{p}}{v_{p}+X_{t}^{2}}-\frac{v_{p}}{v_{p}+%
\overline{X}_{t}^{2}}\right\vert \leq \sum_{t=p+1}^{T}\frac{\left\vert
X_{t}^{2}-\overline{X}_{t}^{2}\right\vert }{v_{p}+\overline{X}_{t}^{2}}\leq
C_{0}\sum_{t=p+1}^{T}\left\vert X_{t}^{2}-\overline{X}_{t}^{2}\right\vert
=O\left( 1\right) ;  \label{ergodic-3}
\end{equation}%
the last passage follows from Lemma \ref{vp_ergodic}. Combining (\ref%
{ergodic-3}) with Lemma \ref{vp_nonergodic}, this entails that for every $%
C_{0}$, there is a $p_{0}$ such that for all $p\geq p_{0}$%
\begin{equation*}
\frac{1}{T-p}\sum_{t=p+1}^{T}\frac{\overline{X}_{t}^{2}}{v_{p}+\overline{X}%
_{t}^{2}}\leq \frac{1}{T-p}\sum_{t=p+1}^{T}\frac{\overline{X}_{t}^{2}}{C_{0}+%
\overline{X}_{t}^{2}}\text{ a.s.}
\end{equation*}%
By virtue of the ergodic theorem, this entails that%
\begin{equation*}
\frac{1}{T-p}\sum_{t=p+1}^{T}\frac{\overline{X}_{t}^{2}}{v_{p}+\overline{X}%
_{t}^{2}}\leq E\left( \frac{\overline{X}_{0}^{2}}{C_{0}+\overline{X}_{0}^{2}}%
\right) \text{ a.s.}
\end{equation*}%
Lemma 2 in \citet{aue06} entails that there exists a $\delta ^{\prime }>0$
(with, clearly, $\delta ^{\prime }<2$) such that $E\left\vert \overline{X}%
_{0}\right\vert ^{\delta ^{\prime }}<\infty $, so that%
\begin{equation*}
E\left( \frac{\overline{X}_{0}^{2}}{C_{0}+\overline{X}_{0}^{2}}\right) \leq
E\left( \frac{\overline{X}_{0}^{2}}{C_{0}+\overline{X}_{0}^{2}}\right)
^{\delta ^{\prime }/2}\leq E\left( \frac{\overline{X}_{0}^{2}}{C_{0}}\right)
^{\delta ^{\prime }/2}\leq C_{1}C_{0}^{-\delta ^{\prime }/2},
\end{equation*}%
where $C_{1}<\infty $.\ Thus, for any $\epsilon >0$ there exists a random $%
p_{0}$ such that for all $p\geq p_{0}$%
\begin{equation*}
\frac{1}{T-p}\sum_{t=p+1}^{T}\frac{X_{t}^{2}}{v_{p}+X_{t}^{2}}\leq \epsilon 
\text{ a.s.;}
\end{equation*}%
the desired result follows immediately. 
\end{proof}

Lemmas \ref{vp_nonergodic} and \ref{limit-one} allow to study the behaviour
of $D_{T}$ under the null when $E\overline{X}_{0}^{2}=\infty $. When $E%
\overline{X}_{0}^{2}<\infty $, the following result holds.

\begin{lemma}
\label{ergodic}Under Assumptions \ref{as-1} and \ref{as-2}, if $E\ln
\left\vert \varphi +b_{0}\right\vert <0$ with $Ee_{0}^{2}<\infty $ and $%
\varphi ^{2}+Eb_{0}^{2}<1$, it holds that%
\begin{equation*}
\frac{1}{T-p}\sum_{t=p+1}^{T}\frac{X_{t}^{2}}{v_{p}+X_{t}^{2}}=E\left( \frac{%
\overline{X}_{0}^{2}}{E\overline{X}_{0}^{2}+\overline{X}_{0}^{2}}\right)
+o\left( 1\right) .
\end{equation*}
\end{lemma}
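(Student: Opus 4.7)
The plan is to reduce the claim to a clean application of the ergodic theorem by first swapping $X_t$ for its stationary counterpart $\bar X_t$ and then treating $v_p$ as a deterministic constant through a sandwich argument.

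First, I would show that $v_p \to E\bar X_0^2$ a.s. as $T\to\infty$ (and hence $p=p(T)\to\infty$). Since $\varphi^2+Eb_0^2<1$ gives $E\bar X_0^2<\infty$ (Quinn, 1982), Lemma \ref{vp_ergodic} with $\kappa=2$ yields $|v_p - p^{-1}\sum_{t=1}^{p}\bar X_t^2| = O(p^{-1})$, and the ergodic theorem applied to the strictly stationary ergodic sequence $\{\bar X_t^2\}$ gives $p^{-1}\sum_{t=1}^{p}\bar X_t^2 \to E\bar X_0^2$ a.s. Moreover, Lemma \ref{positive} ensures $E\bar X_0^2>0$, so $v_p$ is eventually bounded away from $0$.

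Next, I would replace $X_t$ by $\bar X_t$ in the sum of interest. Since $x\mapsto x^2/(v_p+x^2)$ is 1-Lipschitz in $x^2$ when divided by $v_p$ (which is bounded away from $0$), exactly as in the proof of Lemma \ref{limit-one},
\[
\sum_{t=p+1}^{T}\left|\frac{X_t^2}{v_p+X_t^2}-\frac{\bar X_t^2}{v_p+\bar X_t^2}\right| \le \frac{1}{v_p}\sum_{t=p+1}^{T}|X_t^2-\bar X_t^2| = O(1)
\]
by Lemma \ref{vp_ergodic}, so this difference is $o(T)$ and can be absorbed into the $o(1)$ term. Thus it suffices to analyse $(T-p)^{-1}\sum_{t=p+1}^{T}\bar X_t^2/(v_p+\bar X_t^2)$.

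Now I perform the sandwich. Fix $\epsilon>0$ and let $c = E\bar X_0^2$. Since $v_p\to c$ a.s., there exists a random $T_0$ such that $c-\epsilon \le v_p \le c+\epsilon$ for all $T\ge T_0$. Monotonicity of $v\mapsto \bar X_t^2/(v+\bar X_t^2)$ in $v$ then gives
\[
\frac{1}{T-p}\sum_{t=p+1}^{T}\frac{\bar X_t^2}{(c+\epsilon)+\bar X_t^2} \le \frac{1}{T-p}\sum_{t=p+1}^{T}\frac{\bar X_t^2}{v_p+\bar X_t^2} \le \frac{1}{T-p}\sum_{t=p+1}^{T}\frac{\bar X_t^2}{(c-\epsilon)+\bar X_t^2}.
\]
Both extreme sums are ergodic averages of bounded functions of the stationary ergodic sequence $\{\bar X_t\}$, so they converge a.s. to $E[\bar X_0^2/((c\pm\epsilon)+\bar X_0^2)]$. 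By dominated convergence the map $v\mapsto E[\bar X_0^2/(v+\bar X_0^2)]$ is continuous at $v=c$, so letting $\epsilon\downarrow 0$ squeezes the limit to $E[\bar X_0^2/(c+\bar X_0^2)] = E[\bar X_0^2/(E\bar X_0^2+\bar X_0^2)]$.

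The only delicate point is coupling the randomness of $v_p$ with the ergodic averaging; the monotonicity in $v_p$ and continuity of the limit in $v_p$ handle this cleanly, so no real obstacle arises beyond assembling the three ingredients above.
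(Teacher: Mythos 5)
Your proposal is correct and follows essentially the same route as the paper: both arguments rest on $v_{p}\rightarrow E\overline{X}_{0}^{2}>0$ a.s.\ (Lemma \ref{vp_ergodic} plus the ergodic theorem and Lemma \ref{positive}), on the $O\left(1\right)$ replacement of $X_{t}$ by $\overline{X}_{t}$ via Lemma \ref{vp_ergodic}, and on a final application of the ergodic theorem. The only difference is cosmetic: where you handle the randomness of $v_{p}$ by a monotone sandwich in $\epsilon$ followed by a continuity-in-$v$ step, the paper uses the one-line uniform bound $\left\vert \frac{X_{t}^{2}}{v_{p}+X_{t}^{2}}-\frac{X_{t}^{2}}{E\overline{X}_{0}^{2}+X_{t}^{2}}\right\vert \leq \frac{\left\vert E\overline{X}_{0}^{2}-v_{p}\right\vert }{E\overline{X}_{0}^{2}}=o\left(1\right)$, which dispenses with the squeeze entirely.
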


\begin{proof}
Under the conditions of the lemma, it holds that $E\overline{X}%
_{0}^{2}<\infty $ (see Lemma 3 in \citealp{aue06}). We have%
\begin{equation*}
X_{t}^{2}\frac{\left\vert E\overline{X}_{0}^{2}-v_{p}\right\vert }{\left( E%
\overline{X}_{0}^{2}+X_{t}^{2}\right) \left( v_{p}+X_{t}^{2}\right) }\leq 
\frac{\left\vert E\overline{X}_{0}^{2}-v_{p}\right\vert }{E\overline{X}%
_{0}^{2}}.
\end{equation*}%
By virtue of Lemma \ref{positive}, we further have that $E\overline{X}%
_{0}^{2}>0$. By Lemma \ref{vp_ergodic}, we have $v_{p}-p^{-1}\sum_{t=1}^{p}%
\overline{X}_{t}^{2}$ $=$ $O\left( p^{-1}\right) $; also, the ergodic
theorem entails that $p^{-1}\sum_{t=1}^{p}\overline{X}_{t}^{2}-E\overline{X}%
_{0}^{2}=o\left( 1\right) $, so that putting everything together we have $E%
\overline{X}_{0}^{2}-v_{p}=o\left( 1\right) $. Hence%
\begin{equation*}
\frac{1}{T-p}\sum_{t=p+1}^{T}\frac{X_{t}^{2}}{v_{p}+X_{t}^{2}}-\frac{1}{T-p}%
\sum_{t=p+1}^{T}\frac{X_{t}^{2}}{E\overline{X}_{0}^{2}+X_{t}^{2}}=o\left(
1\right) .
\end{equation*}%
Note now that%
\begin{equation}
\sum_{t=p+1}^{T}\left\vert \frac{X_{t}^{2}}{E\overline{X}_{0}^{2}+X_{t}^{2}}-%
\frac{\overline{X}_{t}^{2}}{E\overline{X}_{0}^{2}+\overline{X}_{t}^{2}}%
\right\vert \leq C_{0}\sum_{t=p+1}^{T}\left\vert X_{t}-\overline{X}%
_{t}\right\vert ,  \label{ergodic-1}
\end{equation}%
for some $0<C_{0}<\infty $; thus, by Lemma \ref{vp_ergodic}%
\begin{equation*}
\frac{1}{T-p}\sum_{t=p+1}^{T}\frac{X_{t}^{2}}{E\overline{X}_{0}^{2}+X_{t}^{2}%
}=\frac{1}{T-p}\sum_{t=p+1}^{T}\frac{\overline{X}_{t}^{2}}{E\overline{X}%
_{0}^{2}+\overline{X}_{t}^{2}}+O\left( T^{-1}\right) .
\end{equation*}%
The final result obtains from the ergodic theorem. 
\end{proof}

We now report a series of lemmas to study the nonstationary case, $E\ln
\left\vert \varphi +b_{0}\right\vert \geq 0$.

\begin{lemma}
\label{expect-vp-ha}Under Assumptions \ref{as-1}, \ref{as-3} and \ref{as-4},
it holds that%
\begin{equation*}
E\left\vert X_{t}\right\vert ^{\nu }=%
\begin{array}{c}
O\left( t^{\max \left\{ 0,\nu -1\right\} }\exp \left( C_{0}t\right) \right)
\\ 
O\left( t^{\max \left\{ 0,\nu -1\right\} }\exp \left( C_{0}t\right) \right)
\\ 
O\left( t^{1+\max \left\{ 0,\nu -1\right\} }\right) \text{ \ \ \ \ \ \ \ }%
\end{array}%
\text{ according as }%
\begin{array}{c}
E\ln \left\vert \varphi +b_{0}\right\vert \geq 0\text{ with\ }P\left(
b_{0}=0\right) <1\text{\ } \\ 
E\ln \left\vert \varphi +b_{0}\right\vert >0\text{ with\ }P\left(
b_{0}=0\right) =1 \\ 
E\ln \left\vert \varphi +b_{0}\right\vert =0\text{ with\ }P\left(
b_{0}=0\right) =1%
\end{array}%
,
\end{equation*}%
for some $0<C_{0}<\infty $; $\nu $ is defined in Assumption \ref{as-1}, and,
when Assumption \ref{as-4}\textit{(ii) holds, it is} chosen so that $\nu
<\gamma $.
\end{lemma}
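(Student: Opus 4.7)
The plan is to first solve the recursion in closed form and then bound the $\nu$-th moment by a standard geometric inequality. Iterating (\ref{rca}) yields
\begin{equation*}
X_t = X_0\prod_{s=1}^t(\varphi+b_s) \;+\; \sum_{s=1}^t e_s\prod_{k=s+1}^t(\varphi+b_k).
\end{equation*}
For $\nu\le1$ I would use subadditivity $|a+b|^\nu\le|a|^\nu+|b|^\nu$; for $\nu>1$ the combination of $|a+b|^\nu\le2^{\nu-1}(|a|^\nu+|b|^\nu)$ with the Jensen/convexity bound $|\sum_{s=1}^t x_s|^\nu\le t^{\nu-1}\sum_{s=1}^t|x_s|^\nu$. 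Writing $\rho:=E|\varphi+b_0|^\nu$, independence (Assumption \ref{as-1}) turns the product of $(\varphi+b_k)$'s into $\rho^{t-s}$ upon taking expectation, producing the single master inequality
\begin{equation*}
E|X_t|^\nu \le C_\nu E|X_0|^\nu\,\rho^t \;+\; C_\nu\,t^{\max\{0,\nu-1\}}\,E|e_0|^\nu\sum_{s=1}^t\rho^{t-s}.
\end{equation*}
Finiteness of $\rho$ and $E|e_0|^\nu$ is exactly what Assumption \ref{as-1}(v) and (under \ref{as-4}(ii)) the restriction $\nu<\gamma$ buy us.

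Next I would obtain a lower bound on $\rho$ via Jensen applied to $x\mapsto e^x$:
\begin{equation*}
\ln\rho = \ln E\exp\bigl(\nu\ln|\varphi+b_0|\bigr) \;\ge\; \nu\, E\ln|\varphi+b_0|.
\end{equation*}
Hence $E\ln|\varphi+b_0|\ge0$ forces $\rho\ge1$. For the first two cases one has $\rho\ge1$, so $\sum_{s=1}^t\rho^{t-s}\le t\rho^t$, and combining with the master inequality yields $E|X_t|^\nu=O\bigl(t^{\max\{0,\nu-1\}}\,t\,\rho^t\bigr)$. Absorbing the extra polynomial factor $t$ into the exponential (choosing $C_0>0$ large enough, with $C_0=\ln\rho$ whenever $\rho>1$, and any $C_0>0$ otherwise), I recover the claimed bound $O\bigl(t^{\max\{0,\nu-1\}}\exp(C_0t)\bigr)$. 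The pure AR(1) explosive case $P(b_0=0)=1$, $|\varphi|>1$ is covered identically with $\rho=|\varphi|^\nu$.

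For the third case the situation is crisper: $b_0\equiv 0$ and $|\varphi|=1$ give $\rho=1$ exactly, so $\sum_{s=1}^t\rho^{t-s}=t$, and the master inequality collapses to
\begin{equation*}
E|X_t|^\nu \le C_\nu E|X_0|^\nu + C_\nu\,t^{\max\{0,\nu-1\}}\cdot t\cdot E|e_0|^\nu = O\!\bigl(t^{1+\max\{0,\nu-1\}}\bigr),
\end{equation*}
which is the third bound in the statement. Here finiteness of $E|e_0|^\nu$ follows either from Assumption \ref{as-4}(i) (we even have a moment above $2$) or, under \ref{as-4}(ii), from the choice $\nu<\gamma$.

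The main obstacle I foresee is simply bookkeeping: the statement lumps together three regimes with different dominant mechanisms (expectation of the product exceeding one, deterministic explosive root, and the flat unit-root boundary), and the two sub-inequalities $|a+b|^\nu\le|a|^\nu+|b|^\nu$ versus the Jensen convexity bound must be dispatched correctly for $\nu<1$ and $\nu\ge1$ to produce the $t^{\max\{0,\nu-1\}}$ factor uniformly. No sharp cancellation or delicate estimate is involved beyond the Jensen step $\ln\rho\ge\nu E\ln|\varphi+b_0|$, which is what unifies the explosive and STUR-explosive subcases into the common exponential rate.
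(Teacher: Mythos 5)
Your proposal is correct and follows essentially the same route as the paper's proof: iterate the recursion, apply the $C_r$/convexity inequalities to extract the $t^{\max\{0,\nu-1\}}$ factor, use independence to reduce everything to powers of $\rho=E|\varphi+b_0|^{\nu}$, and invoke Jensen on $\ln\rho\geq\nu E\ln|\varphi+b_0|$ to separate the exponential regimes from the flat unit-root boundary. The only cosmetic difference is that the paper keeps the geometric sum in the form $\rho^{t}\sum_{s=1}^{t}\rho^{-s}$ (so the extra polynomial factor appears only when $\rho=1$, where it is harmless), whereas you bound it by $t\rho^{t}$ and absorb the factor $t$ into a slightly enlarged $C_0$; both are fine.
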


\begin{proof}
Consider the recursive solution%
\begin{equation}
X_{t}=X_{0}\prod\limits_{s=1}^{t}\left( \varphi +b_{s}\right)
+\sum_{s=1}^{t}e_{s}\prod\limits_{z=s}^{t-1}\left( \varphi +b_{z+1}\right) ,
\label{recursive}
\end{equation}%
and note that%
\begin{eqnarray*}
X_{t}^{\nu } &\leq &C_{0}\left[ X_{0}^{\nu }\prod\limits_{s=1}^{t}\left(
\varphi +b_{s}\right) ^{\nu }+\left\vert
\sum_{s=1}^{t}e_{s}\prod\limits_{z=s}^{t-1}\left( \varphi +b_{z+1}\right)
\right\vert ^{\nu }\right] \\
&\leq &C_{0}\left[ X_{0}^{\nu }\prod\limits_{s=1}^{t}\left( \varphi
+b_{s}\right) ^{\nu }+t^{\max \left\{ 0,\nu -1\right\}
}\sum_{s=1}^{t}\left\vert e_{s}\right\vert ^{\nu
}\prod\limits_{z=s}^{t-1}\left\vert \varphi +b_{z+1}\right\vert ^{\nu }%
\right] ,
\end{eqnarray*}%
where $\nu $ is defined in Assumption \ref{as-1} and $C_{0}=1$ when $\nu
\leq 1$ and $2^{\nu -1}$ when $\nu >1$.

We begin by studying the case $P\left( b_{0}=0\right) <1$. By Assumption \ref%
{as-1}, it holds that%
\begin{eqnarray}
&&E\left\vert X_{0}\right\vert ^{\nu }\prod\limits_{s=1}^{t}E\left\vert
\varphi +b_{s}\right\vert ^{\nu }+t^{\max \left\{ 0,\nu -1\right\}
}\sum_{s=1}^{t}E\left\vert e_{s}\right\vert ^{\nu
}\prod\limits_{z=s}^{t-1}E\left\vert \varphi +b_{z+1}\right\vert ^{\nu } 
\notag \\
&\leq &C_{0}\left( E\left\vert \varphi +b_{0}\right\vert ^{\nu }\right)
^{t}\left( 1+t^{\max \left\{ 0,\nu -1\right\} }\sum_{s=1}^{t}\left(
E\left\vert \varphi +b_{0}\right\vert ^{\nu }\right) ^{-s}\right) .
\label{expectation-xsquared}
\end{eqnarray}%
By Jensen's inequality, $\ln E\left\vert \varphi +b_{0}\right\vert ^{\nu
}\geq E\ln \left\vert \varphi +b_{0}\right\vert ^{\nu }\geq 0$, so that $%
1\leq E\left\vert \varphi +b_{0}\right\vert ^{\nu }<\infty $. Thus, $\left(
E\left\vert \varphi +b_{0}\right\vert ^{\nu }\right) ^{t}$ is at most of
order $e^{C_{0}t}$ for some $C_{0}>0$. When $P\left( b_{0}=0\right) =1$ and $%
\left\vert \varphi \right\vert >1$, the same result readily follows.
Finally, when $\left\vert \varphi \right\vert =1$ and $P\left(
b_{0}=0\right) =1$, (\ref{expectation-xsquared}) boils down to%
\begin{equation*}
E\left\vert X_{0}\right\vert ^{\nu }\prod\limits_{s=1}^{t}E\left\vert
\varphi +b_{s}\right\vert ^{\nu }+t^{\max \left\{ 0,\nu -1\right\}
}\sum_{s=1}^{t}E\left\vert e_{s}\right\vert ^{\nu
}\prod\limits_{z=s}^{t-1}E\left\vert \varphi +b_{z+1}\right\vert ^{\nu }\leq
C_{0}\left( 1+t^{1+\max \left\{ 0,\nu -1\right\} }\right) ,
\end{equation*}%
whence again the desired result follows immediately. 
\end{proof}

The next two lemmas contain some anti-concentration bounds for the case of
nonstationary $X_{t}$.

\begin{lemma}
\label{7.4-ht}Under Assumptions \ref{as-1}, \ref{as-3} and \ref{as-4}\textit{%
(i)}, it holds that%
\begin{equation*}
P\left( \left\vert X_{t}\right\vert \leq t^{\alpha }\right) \leq 
\begin{array}{c}
C_{0}t^{\alpha }\left\{ \exp \left( -t^{3/2\left( 1+\nu \right) }\right)
+t^{-\left( \nu -2\right) /2\left( 1+\nu \right) }\right\} \text{ \ \ \ \ \
\ } \\ 
C_{0}t^{\alpha }\exp \left( -t\ln \left\vert \varphi \right\vert \right) 
\text{ \ \ \ \ \ \ \ \ \ \ \ \ \ \ \ \ \ \ \ \ \ \ \ \ \ \ \ \ \ \ \ \ \ }
\\ 
C_{0}t^{\alpha }\left\{ \exp \left( -t^{3/2\left( 1+\nu \right) }\right)
+t^{-2}+t^{-\left( \nu -2\right) /2\left( 1+\nu \right) }\right\} \\ 
C_{0}t^{1-\alpha \nu }+C_{1}t^{\alpha -1/2}\text{ \ \ \ \ \ \ \ \ \ \ \ \ \
\ \ \ \ \ \ \ \ \ \ \ \ \ \ \ \ \ \ \ }%
\end{array}%
\text{ for }%
\begin{array}{c}
E\ln \left\vert \varphi +b_{0}\right\vert >0\text{ with }P\left(
b_{0}=0\right) <1 \\ 
E\ln \left\vert \varphi +b_{0}\right\vert >0\text{ with }P\left(
b_{0}=0\right) =1 \\ 
E\ln \left\vert \varphi +b_{0}\right\vert =0\text{ with }P\left(
b_{0}=0\right) <1 \\ 
E\ln \left\vert \varphi +b_{0}\right\vert =0\text{ with }P\left(
b_{0}=0\right) =1%
\end{array}%
.
\end{equation*}
\end{lemma}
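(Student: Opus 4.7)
I would handle the four cases of the lemma separately, since the mechanism driving anti-concentration is quite different in each. In every case I start from the explicit recursion
\begin{equation*}
X_{t}=X_{0}\prod_{s=1}^{t}\left( \varphi +b_{s}\right) +\sum_{s=1}^{t}e_{s}\prod_{z=s}^{t-1}\left( \varphi +b_{z+1}\right) ,
\end{equation*}
and distinguish according to whether the autoregressive coefficient is genuinely random ($P(b_{0}=0)<1$) and according to the sign of $E\ln\left\vert \varphi +b_{0}\right\vert $.

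\textbf{Degenerate product ($P(b_{0}=0)=1$).} If $\left\vert \varphi \right\vert >1$, I would write $\varphi ^{-t}X_{t}=X_{0}+\sum_{s=1}^{t}\varphi ^{-s}e_{s}$ and observe that this is a sum of independent variables whose distribution has bounded density (by Assumption \ref{as-3}\textit{(i)} on $e_0$, a single term of the sum suffices to bound the density by convolution). Hence $P(|X_t|\le t^{\alpha})\le 2t^{\alpha}\left\vert \varphi \right\vert ^{-t}\sup f_{e_0}$, which gives the second bound. If instead $E\ln \left\vert \varphi +b_{0}\right\vert =0$ with $P(b_{0}=0)=1$, then $X_{t}$ is essentially a random walk with increments $e_{s}$. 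Under Assumption \ref{as-4}\textit{(i)} (finite variance) a Berry-Esseen/local limit theorem argument delivers the $t^{\alpha -1/2}$ term. Under Assumption \ref{as-4}\textit{(ii)} I would truncate each increment at level $t^{\alpha}$: the union bound over $\{|e_{s}|>t^{\alpha}\}$ gives, via Markov with $E|e_{0}|^{\nu}<\infty$, a contribution of size $t\cdot t^{-\alpha \nu}=t^{1-\alpha \nu}$, while the truncated random walk retains enough variance to yield the $t^{\alpha -1/2}$ term through a local limit estimate.

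\textbf{Genuinely random coefficient ($P(b_{0}=0)<1$, i.e.\ cases 1 and 3).} Here I would combine a conditional density bound with a moderate-deviations estimate for $L_{t}:=\ln \left\vert \Pi _{t}\right\vert =\sum_{s=1}^{t}\ln \left\vert \varphi +b_{s}\right\vert $. Writing $Y_{t}=X_{t}/\Pi_{t}=X_{0}+\sum_{s=1}^{t}e_{s}/\prod_{w=1}^{s}(\varphi +b_{w})$, conditional on the $b$'s the sum is a convolution of densities each with bounded sup-norm (again from Assumption \ref{as-3}\textit{(i)}), producing a conditional bound of the form $P(|X_{t}|\le t^{\alpha}\mid b)\le C\,t^{\alpha}M_{t}/|\Pi_{t}|$ where $M_{t}$ is controlled by the best partial product available. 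I would then split
\begin{equation*}
P\left( |X_{t}|\le t^{\alpha}\right) \le P\left( L_{t}<\tau _{t}\right) +P\left( |X_{t}|\le t^{\alpha},\,L_{t}\ge \tau _{t}\right) ,
\end{equation*}
and choose a threshold $\tau _{t}=t^{\delta}$ to balance the two contributions. The first term is handled via Rosenthal's (or Marcinkiewicz--Zygmund's) inequality applied to $L_{t}-tE\ln\left\vert \varphi +b_{0}\right\vert $, using the $k>2$ moment of $\ln \left\vert \varphi +b_{0}\right\vert $ afforded by Assumption \ref{as-3}\textit{(ii)}; this gives a polynomial bound of the form $t^{k/2}\tau _{t}^{-k}$. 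The second term produces a factor $t^{\alpha}e^{-\tau _{t}}$, and optimization in $\delta$ yields precisely the exponents $3/2(1+\nu)$ and $(\nu -2)/2(1+\nu)$ in the statement; the extra $t^{-2}$ present only when $E\ln\left\vert \varphi +b_{0}\right\vert =0$ comes from the fact that in the zero-drift case the moderate-deviation rate for $L_{t}$ is set by its $\sqrt{t}$ fluctuations rather than by the exponential drift.

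\textbf{Main obstacle.} The hard part will be the RCAR cases with $P(b_{0}=0)<1$, specifically establishing a usable conditional density bound on $Y_{t}$ that is simultaneously (i) a sufficiently sharp function of $\left\vert \Pi _{t}\right\vert $ to offset the randomness of the product and (ii) compatible with only a $k>2$ moment on $\ln \left\vert \varphi +b_{0}\right\vert$. A naive single-term convolution bound gives sup-density of order $\left\vert \varphi +b_{1}\right\vert \sup f_{e_{0}}$, which is too crude because $E|\varphi+b_{0}|^{-1}$ need not be less than one; one therefore has to choose the "seed" term in the convolution carefully and couple it with the event $\{L_{t}\ge \tau _{t}\}$ rather than averaging freely. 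The $E\ln\left\vert \varphi +b_{0}\right\vert =0$ subcase is even more delicate, since the drift no longer provides an exponentially large denominator, and one has to squeeze the decay rate out of the $\sqrt{t}$-scale fluctuations of $L_{t}$, which is the source of the additional $t^{-2}$ correction in case 3.
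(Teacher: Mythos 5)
The first thing to note is that the paper does not prove this lemma at all: its entire ``proof'' is the citation ``See Lemma 7.4 in \citet{HT2016}'', so there is no in-paper argument to compare yours against. Judged on its own terms, your reconstruction of the two degenerate cases ($P(b_{0}=0)=1$) is sound: writing $\varphi ^{-t}X_{t}=X_{0}+\sum_{s=1}^{t}\varphi ^{-s}e_{s}$ and using a single-term convolution density bound does give the $t^{\alpha }\exp \left( -t\ln \left\vert \varphi \right\vert \right) $ rate, and a Berry--Esseen/anti-concentration argument for the unit-root case under Assumption \ref{as-4}\textit{(i)} gives a bound of the right flavour. (Note, however, that the lemma is stated only under Assumption \ref{as-4}\textit{(i)}; the heavy-tailed case \textit{(ii)} is handled separately by Lemma \ref{berkes}, so your truncation discussion for that case is addressed to the wrong lemma.) For the genuinely random cases your strategy --- bound the conditional density of $X_{t}/\Pi _{t}$ given the $b$'s by a convolution argument, pay a factor $\left\vert \Pi _{t}\right\vert ^{-1}$, and control the log-product $L_{t}$ using the $k>2$ moments of Assumption \ref{as-3}\textit{(ii)} --- is the natural route and almost certainly the one taken in the cited reference.

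The genuine gap is in case 3, $E\ln \left\vert \varphi +b_{0}\right\vert =0$ with $P\left( b_{0}=0\right) <1$. The split $P\left( \left\vert X_{t}\right\vert \leq t^{\alpha }\right) \leq P\left( L_{t}<\tau _{t}\right) +Ct^{\alpha }e^{-\tau _{t}}$ with $\tau _{t}=t^{\delta }$ cannot work there: $L_{t}$ is a mean-zero random walk, so $P\left( L_{t}<t^{\delta }\right) $ converges to a constant at least $1/2$ for any $\delta <1/2$ (and to $1$ for $\delta >1/2$), and Rosenthal applied to $L_{t}-tE\ln \left\vert \varphi +b_{0}\right\vert =L_{t}$ buys you nothing. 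To extract decay you must use the \emph{running} extremum of the partial products rather than the terminal value: the conditional density of $\sum_{s}e_{s}\Pi _{s}^{-1}$ is bounded by $\min_{s\leq t}\left\vert \Pi _{s}\right\vert \sup f_{e_{0}}$, so the conditional probability is controlled by $t^{\alpha }\exp \left( -\left( L_{t}-\min_{s\leq t}L_{s}\right) \right) $, and one then needs a lower-deviation bound for the range of the walk (which is where the $k>2$ moments enter, via an invariance principle). Your closing remark attributing the extra $t^{-2}$ to ``$\sqrt{t}$-scale fluctuations'' is the right intuition, but the mechanism you actually wrote down does not deliver any decaying bound in this case. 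A secondary issue: the exponents your optimisation would produce (e.g.\ $t^{-k/2}$ from Markov on $L_{t}$ in case 1, or $t^{-\left( \nu ^{\prime }-2\right) /2}$ from Berry--Esseen in case 4) do not match the stated $t^{-\left( \nu -2\right) /2\left( 1+\nu \right) }$ and $t^{1-\alpha \nu }$; any polynomial rate suffices for the use made of this lemma in Lemma \ref{expect-xt-ha}, but as written you would be proving a different inequality from the one stated.
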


\begin{proof}
See Lemma 7.4 in \citet{HT2016}.
\end{proof}

Lemma \ref{7.4-ht} covers the cases $E\ln \left\vert \varphi
+b_{0}\right\vert >0$, $E\ln \left\vert \varphi +b_{0}\right\vert =0$ with
genuine random coefficient and $E\ln \left\vert \varphi +b_{0}\right\vert =0$
with no randomness and $E\left\vert e_{0}\right\vert ^{\nu ^{\prime
}}<\infty $, with $\nu ^{\prime }>2$. The next lemma is useful to study the
case of a non-random unit root process with infinite variance.

\begin{lemma}
\label{berkes}Under Assumptions \ref{as-1} and \ref{as-4}\textit{(ii)}, it
holds that%
\begin{equation*}
P\left( \left\vert X_{t}\right\vert \leq t^{\alpha }\right) \leq
C_{0}t^{1-\alpha \gamma -\gamma \epsilon }+C_{1}t^{\alpha -1/\gamma },
\end{equation*}%
for some $\epsilon >0$.
\end{lemma}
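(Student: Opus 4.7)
Since Assumption \ref{as-4}\textit{(ii)} is invoked in the regime $E\ln|\varphi+b_0|=0$ with $b_0\equiv 0$ a.s., necessarily $\varphi=\pm 1$, so that (by symmetry of $e_0$ and the fact that $\{-X_t\}$ is distributionally identical to $\{X_t\}$ when $\varphi=-1$) I may assume $X_t=X_0+S_t$ with $S_t=\sum_{s=1}^{t}e_s$. I would fix a truncation level $L=L_t=t^{\alpha+\epsilon}$, write $\tilde e_s=e_sI(|e_s|\leq L)$, $\tilde S_t=\sum_{s=1}^{t}\tilde e_s$, and use
\begin{equation*}
P(|X_t|\leq t^\alpha)\;\leq\;P\!\left(\max_{1\leq s\leq t}|e_s|>L\right)+P(|X_0+\tilde S_t|\leq t^\alpha).
\end{equation*}
The first summand, by the union bound and the tail expansion in Assumption \ref{as-4}\textit{(ii)}, is at most $tC_0L^{-\gamma}=C_0t^{1-\alpha\gamma-\gamma\epsilon}$, giving the first term in the claim.

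For the second summand I would condition on $X_0$ (which is independent of $(\tilde e_s)$) and apply Esseen's concentration inequality,
\begin{equation*}
\sup_{x\in\mathbb{R}}P(|\tilde S_t-x|\leq t^\alpha)\;\leq\;Ct^\alpha\!\int_{-1/(2t^\alpha)}^{1/(2t^\alpha)}\!|\phi_{\tilde e}(u)|^{t}\,du,
\end{equation*}
so that the task reduces to bounding $|\phi_{\tilde e}(u)|$ on that interval uniformly in $t$. The heart of the argument is to show that, for all $t$ sufficiently large and all $|u|\leq 1/(2t^\alpha)$, one has $|\phi_{\tilde e}(u)|\leq \exp(-c|u|^\gamma)$ for some $c>0$. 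To obtain this I would exploit the symmetry of $e_0$ (so $\phi_{\tilde e}$ is real) and decompose
\begin{equation*}
1-\phi_{\tilde e}(u)\;=\;(1-\phi_e(u))\;-\;E[(1-\cos(ue))I(|e|>L)],
\end{equation*}
controlling the second term trivially by $2P(|e|>L)\leq 2C_0L^{-\gamma}$, while for the first term the Karamata–Tauberian correspondence applied to Assumption \ref{as-4}\textit{(ii)} (where the monotonicity of $\varsigma(x)x^{-\gamma}$ supplies the regular variation needed) gives $1-\phi_e(u)\sim c_0|u|^\gamma$ as $u\to 0$. For the choice $L=t^{\alpha+\epsilon}$ the truncation error is negligible against $|u|^\gamma$ whenever $|u|\gtrsim L^{-1}$, and for the complementary range $|u|\lesssim L^{-1}$ a separate elementary $u^2$-bound via the truncated second moment $E[\tilde e^{2}]\leq CL^{2-\gamma}$ gives $1-\phi_{\tilde e}(u)\geq c\,u^{2}L^{2-\gamma}\geq c|u|^{\gamma}$; in both regimes the exponential bound above follows.

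Having this, the change of variable $v=t^{1/\gamma}u$ gives $\int\exp(-ct|u|^\gamma)du\leq Ct^{-1/\gamma}$, whence $P(|X_0+\tilde S_t|\leq t^\alpha)\leq C_1t^{\alpha-1/\gamma}$, which is the second term in the stated bound. Putting everything together completes the proof.

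\textbf{Expected main obstacle.} The delicate step is the uniform control of $|\phi_{\tilde e}(u)|$ on the whole interval $|u|\leq 1/(2t^\alpha)$: one has to patch together the \emph{stable regime} ($|u|\gtrsim 1/L$, where the original tail of $e_0$ governs $1-\phi_e$) with the \emph{Gaussian-like regime} ($|u|\lesssim 1/L$, where the truncated second moment governs the behaviour), and to ensure that the $\varsigma(\cdot)$ correction in the tail does not spoil the Tauberian asymptotic $1-\phi_e(u)\sim c_0|u|^\gamma$. It is precisely the monotonicity built into Assumption \ref{as-4}\textit{(ii)} that makes this transition uniform and yields an anti-concentration bound of general interest.
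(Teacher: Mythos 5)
Your route is genuinely different from the paper's. The paper does not truncate: it invokes the almost sure approximation of Theorem 1 in \citet{berkes1986}, coupling $e_i$ with $w_i=y_i+z_i$ where the $y_i$ are exactly symmetric $\gamma$-stable, so that $\sum_{i\leq t}(e_i-w_i)=O\left(t^{1/\gamma-\epsilon'}\right)$; the term $C_0t^{1-\alpha\gamma-\gamma\epsilon}$ then comes from a moment bound on the coupling error, and the term $C_1t^{\alpha-1/\gamma}$ from subadditivity of the concentration function for independent summands together with the exact self-similarity $\sum_{i\leq t}y_i\overset{d}{=}t^{1/\gamma}y_1$ and the boundedness of the stable density. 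Your truncation-plus-Esseen argument is more elementary and self-contained (no strong invariance principle is needed); the union bound on $\left\{\max_{s\leq t}|e_s|>L\right\}$ reproduces the first term exactly, with $\epsilon$ now a free truncation parameter rather than the rate inherited from the coupling, and both the reduction of $\varphi=-1$ to $\varphi=1$ via $(-1)^tX_t$ and the conditioning on $X_0$ are fine.

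There is, however, one incorrect step. In the range $|u|\lesssim 1/L$ you assert $1-\phi_{\tilde e}(u)\geq c\,u^2L^{2-\gamma}\geq c|u|^\gamma$; for $\gamma<2$ the second inequality is equivalent to $|u|L\geq 1$, i.e. it holds only at and above the boundary of that range, while for $|u|\ll 1/L$ one actually has $1-\phi_{\tilde e}(u)\leq C u^2 L^{2-\gamma}=o\left(|u|^\gamma\right)$, so no uniform bound $|\phi_{\tilde e}(u)|\leq \exp(-c|u|^\gamma)$ on all of $|u|\leq 1/(2t^\alpha)$ is available. (At that point you also need a lower, not an upper, bound on $E\tilde e^2$, although Karamata supplies both.) The gap is repairable without changing the architecture: bound the contribution of $\{|u|\leq K/L\}$ to the Esseen integral trivially by $t^\alpha\cdot 2K/L=2Kt^{-\epsilon}$, which is at most $2Kt^{\alpha-1/\gamma}$ precisely when $\alpha+\epsilon\geq 1/\gamma$ --- and this is the only regime in which the stated bound is informative, since otherwise the first term $t^{1-\gamma(\alpha+\epsilon)}$ already exceeds one (equivalently, choose $\epsilon$ with $\alpha+\epsilon>1/\gamma$ at the outset). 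On $\{K/L\leq|u|\leq 1/(2t^\alpha)\}$, taking $K$ large enough that the truncation error $2P(|e_0|>L)\leq 2C_0K^{-\gamma}|u|^\gamma$ is dominated by the Tauberian asymptotic $1-\phi_e(u)\sim c_0|u|^\gamma$, your exponential bound and the change of variables do yield $Ct^{\alpha-1/\gamma}$. With this patch the argument delivers the stated inequality.
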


\begin{proof}
By Theorem 1 in \citet{berkes1986}, on a suitably larger space we can
construct two independent sequences of \textit{i.i.d.} random variables, say 
$\left\{ y_{i},i\geq 1\right\} $ and $\left\{ z_{i},i\geq 1\right\} $ such
that: $y_{i}$ and $z_{i}$ are both symmetric, the common characteristic
function of the $y_{i}$ is $\exp \left( -C_{0}\left\vert x\right\vert
^{\gamma }\right) $ with $C_{0}>0$, the $z_{i}$'s have common symmetric
distribution function $F_{z}\left( x\right) $ satisfying $1-F_{z}\left(
x\right) =\varsigma \left( x\right) x^{-\gamma }$ for $x\geq x_{0}$ and%
\begin{equation}
\sum_{i=1}^{t}\left( e_{i}-y_{i}-z_{i}\right) =O\left( t^{1/\gamma -\epsilon
^{\prime }}\right) ,  \label{berkes-sip}
\end{equation}%
for some $\epsilon ^{\prime }>0$. Note also that, using equation (2.32) in %
\citet{berkes1989} and Markov inequality, (\ref{berkes-sip}) entails the
following estimate%
\begin{equation}
P\left( \left\vert \sum_{i=1}^{t}\left( e_{i}-w_{i}\right) \right\vert \geq
t^{\alpha }\right) \leq C_{0}t^{1-\alpha \gamma -\gamma \epsilon }.
\label{berkes-dehling}
\end{equation}%
Let $w_{i}=y_{i}+z_{i}$; we can write%
\begin{equation*}
P\left( \left\vert X_{t}\right\vert \leq t^{\alpha }\right) \leq P\left(
\left\vert \sum_{i=1}^{t}w_{i}\right\vert \leq 2t^{\alpha }\right)
+C_{0}t^{1-\alpha \gamma -\gamma \epsilon }.
\end{equation*}%
Let now $Q\left( X;\lambda \right) =\sup_{x}P\left( x\leq X\leq x+\lambda
\right) $ denote the concentration function of a random variable $X$ (see
\citealp{petrov}), and let $Y=\sum_{i=1}^{t}y_{i}$ and $Z=\sum_{i=1}^{t}z_{i}$. Clearly%
\begin{equation*}
P\left( \left\vert \sum_{i=1}^{t}w_{i}\right\vert \leq t^{\alpha }\right)
\leq 2Q\left( Y+Z;t^{\alpha }\right) ,
\end{equation*}%
and by the independence between the $y_{i}$'s and the $z_{i}$'s,%
\begin{equation*}
Q\left( Y+Z;t^{\alpha }\right) \leq \min \left\{ Q\left( Y;t^{\alpha
}\right) ,Q\left( Z;t^{\alpha }\right) \right\} \leq Q\left( Y;t^{\alpha
}\right) .
\end{equation*}%
Now, given that the $y_{i}$'s have a distribution belonging in the stable
distribution family, we have%
\begin{equation}
P\left( x\leq \left\vert \sum_{i=1}^{t}y_{i}\right\vert \leq x+t^{\alpha
}\right) =P\left( xt^{-1/\gamma }\leq \left\vert y_{1}\right\vert \leq
\left( x+t^{\alpha }\right) t^{-1/\gamma }\right) ;  \label{stable}
\end{equation}%
further, since the characteristic function of the $y_{i}$s is integrable,
the density of the $y_{i}$s is bounded with upper bound $m_{y}$. Hence%
\begin{equation*}
\sup_{x}P\left( xt^{-1/\gamma }\leq \left\vert y_{1}\right\vert \leq \left(
x+t^{\alpha }\right) t^{-1/\gamma }\right) \leq \frac{1}{2}m_{y}t^{\alpha
-1/\gamma },
\end{equation*}%
which concludes the proof. 
\end{proof}

\begin{lemma}
\label{vp-bound}Let $a_{T}$ be a positive, real-valued sequence diverging to
positive infinity as $T\rightarrow \infty $. Under Assumptions \ref{as-1}, %
\ref{as-3} and \ref{as-4}, it holds that%
\begin{equation*}
P\left( v_{p}\geq a_{T}\right) \leq p^{-\min \left\{ 1,\nu /2\right\}
}a_{T}^{-\nu /2}\sum_{i=1}^{p}E\left\vert X_{i}\right\vert ^{\nu },
\end{equation*}%
for some $0<C_{0}<\infty $; $\nu $ is defined in Assumption \ref{as-1}, and,
when Assumption \ref{as-4}\textit{(ii) holds, it is} chosen so that $\nu
<\gamma $.
\end{lemma}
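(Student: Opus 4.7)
The plan is a straightforward Markov-plus-convexity argument. First, since $v_{p}\ge 0$, I would rewrite
\begin{equation*}
P\left(v_{p}\ge a_{T}\right)=P\bigl(v_{p}^{\nu/2}\ge a_{T}^{\nu/2}\bigr)\le a_{T}^{-\nu/2}\,E\bigl[v_{p}^{\nu/2}\bigr],
\end{equation*}
so the problem reduces to estimating $E[v_{p}^{\nu/2}]=p^{-\nu/2}\,E\bigl[(\sum_{t=1}^{p}X_{t}^{2})^{\nu/2}\bigr]$.

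The second step is to split on whether $\nu\le 2$ or $\nu>2$. When $\nu\le 2$, the map $x\mapsto x^{\nu/2}$ on $[0,\infty)$ is concave and hence subadditive on nonnegative reals, which gives $(\sum_{t=1}^{p}X_{t}^{2})^{\nu/2}\le \sum_{t=1}^{p}|X_{t}|^{\nu}$ and therefore $E[v_{p}^{\nu/2}]\le p^{-\nu/2}\sum_{i=1}^{p}E|X_{i}|^{\nu}$. When $\nu>2$, applying Jensen's inequality to the convex function $x\mapsto x^{\nu/2}$ against the uniform probability measure on $\{1,\dots,p\}$ yields $\bigl(p^{-1}\sum_{t=1}^{p}X_{t}^{2}\bigr)^{\nu/2}\le p^{-1}\sum_{t=1}^{p}|X_{t}|^{\nu}$, so that $E[v_{p}^{\nu/2}]\le p^{-1}\sum_{i=1}^{p}E|X_{i}|^{\nu}$. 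In either case the prefactor of $\sum_{i}E|X_{i}|^{\nu}$ is exactly $p^{-\min\{1,\nu/2\}}$, and combining this with the Markov step above produces the announced bound.

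I do not anticipate any real obstacle: this is a textbook Markov-plus-convexity/subadditivity argument, and the displayed inequality holds with constant one (so the ``for some $0<C_{0}<\infty$'' clause in the statement is inessential for the proof itself). The only point that deserves a quick remark is that each $E|X_{i}|^{\nu}$ on the right-hand side is finite; this follows by a one-line induction on $t$ from the recursion \eqref{rca} together with Assumption \ref{as-1}(v)--(vi). Under Assumption \ref{as-4}(ii) the restriction $\nu<\gamma$ built into the lemma's hypotheses is exactly what guarantees $E|e_{0}|^{\nu}<\infty$ and hence $E|X_{t}|^{\nu}<\infty$ at every finite $t$, so the bound remains meaningful in the heavy-tailed regime.
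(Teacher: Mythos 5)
Your proof is correct and is essentially identical to the paper's: the paper likewise passes to $P\left(v_{p}^{\nu/2}\geq a_{T}^{\nu/2}\right)$ via Markov's inequality and then applies convexity (Jensen) when $\nu/2>1$ and the $C_{r}$-inequality (the subadditivity you invoke) when $\nu/2\leq 1$. Your additional remark on the finiteness of $E\left\vert X_{i}\right\vert^{\nu}$ is consistent with what the paper establishes separately in Lemma \ref{expect-vp-ha}.
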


\begin{proof}
The lemma follows immediately upon noting that $P\left( v_{p}\geq
a_{T}\right) =P\left( v_{p}^{\nu /2}\geq a_{T}^{\nu /2}\right) $, and
applying convexity (when $\frac{\nu }{2}>1$) or the $C_{r}$-inequality
(otherwise) to $v_{p}^{\nu /2}$.
\end{proof}

\begin{lemma}
\label{expect-xt-ha}Under Assumptions \ref{as-1} and \ref{as-3}-\ref{p(T)},
it holds that, in all cases considered%
\begin{equation*}
\frac{1}{T}\sum_{t=1}^{T}E\left( \frac{v_{p}}{v_{p}+X_{t}^{2}}\right)
=O\left( T^{-\epsilon }\right) ,
\end{equation*}%
for some $\epsilon >0$.
\end{lemma}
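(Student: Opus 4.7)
The basic idea is to decompose the ratio via an elementary three-term bound. For any positive sequences $a_{T}$ and $b_{t}$, on the event $\{v_{p}\leq a_{T}\}\cap \{X_{t}^{2}\geq b_{t}\}$ the quantity $v_{p}/(v_{p}+X_{t}^{2})$ is bounded by $a_{T}/b_{t}$, while on the complementary event it is bounded by $1$. This yields
\begin{equation*}
E\!\left(\frac{v_{p}}{v_{p}+X_{t}^{2}}\right)\leq \frac{a_{T}}{b_{t}}+P(v_{p}>a_{T})+P(X_{t}^{2}<b_{t}),
\end{equation*}
and, after averaging over $t$,
\begin{equation*}
\frac{1}{T}\sum_{t=1}^{T}E\!\left(\frac{v_{p}}{v_{p}+X_{t}^{2}}\right)\leq P(v_{p}>a_{T})+\frac{a_{T}}{T}\sum_{t=1}^{T}\frac{1}{b_{t}}+\frac{1}{T}\sum_{t=1}^{T}P(X_{t}^{2}<b_{t}).
\end{equation*}
I would then take $a_{T}=T^{\rho }$ and $b_{t}=t^{2\sigma }$ for positive constants $\rho ,\sigma $ tuned to the case at hand, and show each of the three contributions is $O(T^{-\epsilon })$.

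The first term is controlled by Lemma \ref{vp-bound} combined with Lemma \ref{expect-vp-ha}: choose $\nu >0$ as afforded by Assumption \ref{as-1}(v), further restricting $\nu <\gamma $ in the stable sub-case of Assumption \ref{as-4}(ii). Lemma \ref{expect-vp-ha} shows $\sum_{i=1}^{p}E|X_{i}|^{\nu }$ is at most $C_{0}\exp (C_{1}p)$ in the explosive regimes and polynomial in $p$ in the unit-root regimes. By Assumption \ref{p(T)}(ii), $p=O(\ln \ln T)$, so in either case $\sum_{i=1}^{p}E|X_{i}|^{\nu }$ is polylogarithmic in $T$. Lemma \ref{vp-bound} then gives $P(v_{p}>T^{\rho })=O(T^{-\rho \nu /2}(\ln T)^{C_{2}})=O(T^{-\epsilon })$ for any $\rho ,\nu >0$.

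For the anti-concentration term I invoke Lemma \ref{7.4-ht} under Assumption \ref{as-3} and Assumption \ref{as-4}(i), and Lemma \ref{berkes} under Assumption \ref{as-4}(ii). In the two explosive sub-cases the bound decays (super-)exponentially in $t$, so any fixed $\sigma >0$ makes the averaged probability $O(T^{-\epsilon })$ trivially. In the non-explosive boundary sub-cases the bound is a sum of polynomials in $t$ whose exponents depend on $\sigma $: in the random-coefficient unit-root case I pick $\sigma $ below the threshold $(\nu -2)/[2(1+\nu )]$ afforded by Assumption \ref{as-3}(ii); in the stable, infinite-variance case I take $\sigma =1/\gamma -\epsilon ^{\prime }$, so that by Lemma \ref{berkes} both surviving terms are $O(t^{-c\epsilon ^{\prime }})$, and their Ces\`{a}ro averages are $O(T^{-c\epsilon ^{\prime }})$.

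Finally, the ratio term is $a_{T}T^{-1}\sum_{t=1}^{T}t^{-2\sigma }$, which is $O(T^{\rho -1})$ if $2\sigma >1$, $O(T^{\rho -1}\ln T)$ if $2\sigma =1$, and $O(T^{\rho -2\sigma })$ if $2\sigma <1$. In every case this is $O(T^{-\epsilon })$ provided $\rho <\min (1,2\sigma )$, which can always be arranged by taking $\rho $ arbitrarily small. The main obstacle is the joint parameter-balancing in the stable, infinite-variance unit-root regime, where Lemma \ref{berkes} forces $\sigma $ into the narrow window $(1/\gamma -\epsilon ^{\prime },1/\gamma )$; however, since $1/\gamma \geq 1/2$ for $\gamma \in (0,2]$ and $\rho ,\nu $ are free to be chosen independently, the three bounds can be made simultaneously polynomially small, which completes the argument.
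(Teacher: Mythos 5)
Your proposal is correct and follows essentially the same route as the paper's proof: the paper likewise splits $E\bigl(v_{p}/(v_{p}+X_{t}^{2})\bigr)$ into a tail event for $v_{p}$ (handled via Lemmas \ref{vp-bound} and \ref{expect-vp-ha}, with $p=O(\ln\ln T)$ keeping $\sum_{i\leq p}E|X_{i}|^{\nu}$ polylogarithmic), an anti-concentration event for $X_{t}$ (handled via Lemmas \ref{7.4-ht} and \ref{berkes}), and a deterministic ratio term, then balances the exponents exactly as you do. The only difference is cosmetic (conditional expectations versus an explicit event decomposition, and the notation $T^{k},t^{\alpha}$ versus $T^{\rho},t^{2\sigma}$).
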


\begin{proof}
Let $k>0$, and note that 
\begin{eqnarray*}
E\left( \frac{v_{p}}{v_{p}+X_{t}^{2}}\right) &=&E\left( \left. \frac{v_{p}}{%
v_{p}+X_{t}^{2}}\right\vert v_{p}\geq T^{k}\right) P\left( v_{p}\geq
T^{k}\right) +E\left( \left. \frac{v_{p}}{v_{p}+X_{t}^{2}}\right\vert
v_{p}<T^{k}\right) P\left( v_{p}<T^{k}\right) \\
&\leq &P\left( v_{p}\geq T^{k}\right) +E\left( \left. \frac{v_{p}}{%
v_{p}+X_{t}^{2}}\right\vert v_{p}<T^{k}\right) .
\end{eqnarray*}%
On account of Lemmas \ref{expect-vp-ha} and \ref{vp-bound}, it is immediate
to see that, in the worst case, $P\left( v_{p}\geq T^{k}\right) \leq
C_{0}T^{-k\nu /2}\left( \ln \ln T\right) ^{2}\left( \ln T\right) ^{\delta }$
for some $\delta >0$. Also%
\begin{eqnarray*}
&&E\left( \left. \frac{v_{p}}{v_{p}+X_{t}^{2}}\right\vert v_{p}<T^{k}\right)
\\
&\leq &E\left( \left. \frac{v_{p}}{v_{p}+X_{t}^{2}}\right\vert
v_{p}<T^{k},\left\vert X_{t}\right\vert >t^{\alpha }\right) P\left(
\left\vert X_{t}\right\vert >t^{\alpha }\right) +E\left( \left. \frac{v_{p}}{%
v_{p}+X_{t}^{2}}\right\vert v_{p}<T^{k},\left\vert X_{t}\right\vert \leq
t^{\alpha }\right) P\left( \left\vert X_{t}\right\vert \leq t^{\alpha
}\right) \\
&\leq &T^{k}t^{-2\alpha }+P\left( \left\vert X_{t}\right\vert \leq t^{\alpha
}\right) ;
\end{eqnarray*}%
hence%
\begin{equation*}
\frac{1}{T}\sum_{t=1}^{T}E\left( \frac{v_{p}}{v_{p}+X_{t}^{2}}\right) \leq
C_{0}T^{-k\nu /2}\left( \ln \ln T\right) ^{2}\left( \ln T\right) ^{\delta
}+C_{1}T^{k-2\alpha }+C_{2}\frac{1}{T}\sum_{t=1}^{T}P\left( \left\vert
X_{t}\right\vert \leq t^{\alpha }\right) .
\end{equation*}%
Using Lemmas \ref{7.4-ht} and \ref{berkes}, the desired result follows.
\end{proof}

Finally, we need the following lemma.

\begin{lemma}
\label{horvath-trapani}Consider a sequence $U_{T}$ for which $E\left\vert
U_{T}\right\vert \leq a_{T}$, where $a_{T}$ is a positive, monotonically
non-decreasing sequence. Then there exists a $C_{0}<\infty $ such that%
\begin{equation*}
\lim \sup_{T\rightarrow \infty }\frac{\left\vert U_{T}\right\vert }{%
a_{T}\left( \ln T\right) ^{2+\epsilon }}\leq C_{0}\text{ a.s.}
\end{equation*}
\end{lemma}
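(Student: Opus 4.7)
The plan is to combine Markov's inequality with the first Borel--Cantelli lemma, applied along a geometric subsequence rather than over every integer $T$. Indeed, a direct use of Markov gives
$$P\!\left(|U_T| > C_0\, a_T (\ln T)^{2+\epsilon}\right) \leq \frac{E|U_T|}{C_0\, a_T (\ln T)^{2+\epsilon}} \leq \frac{1}{C_0 (\ln T)^{2+\epsilon}},$$
but the series $\sum_T 1/(\ln T)^{2+\epsilon}$ diverges, so Borel--Cantelli is not immediately applicable across the full sequence; this is the main obstacle, and it is what forces the subsequence device.

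To overcome it, I would choose the subsequence $T_k = \lceil e^k \rceil$, so that $\ln T_k \asymp k$ and therefore
$$\sum_{k \geq 1} P\!\left(|U_{T_k}| > C_0\, a_{T_k} (\ln T_k)^{2+\epsilon}\right) \leq \frac{C_1}{C_0}\sum_{k\geq 1} \frac{1}{k^{2+\epsilon}} < \infty.$$
The first Borel--Cantelli lemma then yields, for every $C_0 > 0$,
$$\limsup_{k \to \infty} \frac{|U_{T_k}|}{a_{T_k} (\ln T_k)^{2+\epsilon}} \leq C_0 \quad \text{a.s.}$$

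To pass from the subsequence bound to the full $\limsup$, I would use the fact that $a_T$ is monotonically non-decreasing (so $a_T \geq a_{T_k}$ on $[T_k, T_{k+1}]$) and exploit the monotonicity of $U_T$ available in the intended applications (where $U_T$ is a partial sum of non-negative summands, hence non-decreasing). For $T \in [T_k, T_{k+1}]$ one then has
$$\frac{|U_T|}{a_T (\ln T)^{2+\epsilon}} \leq \frac{|U_{T_{k+1}}|}{a_{T_k} (\ln T_k)^{2+\epsilon}} \leq C_0 \cdot \frac{a_{T_{k+1}}}{a_{T_k}} \cdot \left(\frac{\ln T_{k+1}}{\ln T_k}\right)^{2+\epsilon}.$$
Since $\ln T_{k+1}/\ln T_k = (k+1)/k \to 1$, and the ratio $a_{T_{k+1}}/a_{T_k}$ stays uniformly bounded whenever $a_T$ has at most polynomial growth (which is the regime in every intended application of the lemma in this paper), the bound inflates by at most a constant factor, and the $\limsup$ along the full sequence is controlled by a (possibly enlarged) $C_0$, yielding the claim.
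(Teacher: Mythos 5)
Your argument is sound for every instance in which the paper actually invokes this lemma, but it proves a strictly weaker statement than the one displayed, and it diverges from the paper's route precisely at the interpolation step. The paper does not assume monotonicity of $U_T$: it first upgrades the moment bound to a maximal inequality, $E\max_{1\leq t\leq T}\left\vert U_{t}\right\vert \leq C_{1}a_{T}\ln T$, via equation (2.3) of \citet{serfling1970} --- this is where the extra logarithm, and hence the exponent $2+\epsilon$ rather than $1+\epsilon$, comes from --- and then verifies $\sum_{T}T^{-1}P\left( \max_{1\leq t\leq T}\left\vert U_{t}\right\vert \geq a_{T}\left( \ln T\right) ^{2+\epsilon }\right) <\infty$, concluding by the blocking argument in the proof of Corollary 2.4 of \citet{cai2006}. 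That weighted Borel--Cantelli device plays exactly the role of your geometric subsequence, so the two proofs share a skeleton; the genuine difference is that the paper controls the block $[T_{k},T_{k+1}]$ through the maximal inequality, while you control it through monotonicity of $U_T$. Your version is more elementary and, where it applies, would in fact tolerate the smaller exponent $1+\epsilon$; the paper's version accommodates partial-sum processes whose increments are signed, for which $\left\vert U_{T}\right\vert$ need not be monotone.

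Two caveats. First, you are right that extra structure is indispensable: for a completely arbitrary sequence with $E\left\vert U_{T}\right\vert \leq a_{T}$ the conclusion is false (take $U_{T}$ independent with $P\left( U_{T}=T\right) =1/T$ and $a_{T}=1$; the second Borel--Cantelli lemma gives $U_{T}=T$ infinitely often, so the normalised $\limsup$ is infinite a.s.). The lemma therefore cannot be proved exactly as stated, and both proofs tacitly import properties of the $U_T$ to which it is applied --- yours imports monotonicity and polynomial growth of $a_T$, the paper's imports the partial-sum structure required by Serfling's inequality. Since a reviewer grading against the literal statement would call this a gap, you should state the additional hypotheses explicitly rather than appeal to ``the intended applications.'' Second, your requirement that $a_{T_{k+1}}/a_{T_{k}}$ stay bounded is satisfied in every use made of the lemma here ($a_{T}\asymp T^{1-\epsilon }$ in the proof of (\ref{th2.1-b}), and analogous polynomial bounds in the proof of Theorem \ref{dt-deterministics}); note that the paper's own final step, which compares $a_{T}\left( \ln T\right) ^{2+\epsilon }$ across dyadic blocks, needs the same doubling-type condition, so on this point you lose no generality relative to the published argument.
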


\begin{proof} 
By equation (2.3) in \citet{serfling1970}, it holds that $E\max_{1\leq t\leq
T}\left\vert U_{t}\right\vert \leq C_{1}a_{T}\ln T$. Therefore%
\begin{equation*}
\sum_{T=1}^{\infty }\frac{1}{T}P\left( \max_{1\leq t\leq T}\left\vert
U_{t}\right\vert \geq a_{T}\left( \ln T\right) ^{2+\epsilon }\right) \leq
C_{1}\sum_{T=1}^{\infty }\frac{1}{T}\frac{a_{T}\ln T}{a_{T}\left( \ln
T\right) ^{2+\epsilon }}<\infty .
\end{equation*}%
The desired result follows from the proof of Corollary 2.4 in \citet{cai2006}. 
\end{proof}

\section{Proofs\label{proofs}}

\begin{proof}[Proof of Theorem \protect\ref{dt-tilde-convergence}]
We begin with equation (\ref{th2.1-a}), which holds for $E\ln \left\vert
\varphi +b_{0}\right\vert <0$. The constant $C_{0}$ in the statement of the
theorem differs according as $E\overline{X}_{0}^{2}=\infty $\ or $E\overline{%
X}_{0}^{2}<\infty $. In the latter case, an immediate consequence of Lemma %
\ref{ergodic} is%
\begin{equation*}
\frac{1}{T-p}\sum_{t=p+1}^{T}\frac{v_{p}}{v_{p}+X_{t}^{2}}=E\left( \frac{E%
\overline{X}_{0}^{2}}{E\overline{X}_{0}^{2}+\overline{X}_{0}^{2}}\right)
+o\left( 1\right) ;
\end{equation*}%
on account of Assumption \ref{as-2}\textit{(i)}, we also have%
\begin{equation*}
E\left( \frac{E\overline{X}_{0}^{2}}{E\overline{X}_{0}^{2}+\overline{X}%
_{0}^{2}}\right) >0.
\end{equation*}%
When $E\overline{X}_{0}^{2}=\infty $, Lemma \ref{limit-one} yields 
\begin{equation*}
\frac{1}{T-p}\sum_{t=p+1}^{T}\frac{v_{p}}{v_{p}+X_{t}^{2}}=1+o\left(
1\right) ,
\end{equation*}%
whence the desired result. Finally, equation (\ref{th2.1-b}) is an immediate
consequence of Lemmas \ref{expect-xt-ha} and \ref{horvath-trapani}.
\end{proof}

\begin{proof}[Proof of Theorem \protect\ref{theta}]
The proof, and in particular condition (\ref{restriction}), are a refinement
of similar results in related papers, such as \citet{bandi2014} and %
\citet{HT16}. We begin by considering equation (\ref{asy-null}); in this
case, (\ref{th2.1-a}) entails $l_{T}=g\left( C_{0}\psi \left( T\right)
\right) $ a.s., so that, by (\ref{lt-null-as}), we can assume from now on
that $\lim_{T\rightarrow \infty }l_{T}=\infty $. Let $E^{\ast }$ denote the
expected value with respect to $P^{\ast }$, and $m_{G}$ the upper bound for
the density of $G$. It holds that: 
\begin{align}
R^{-1/2}\sum_{i=1}^{R}\left[ \zeta _{i}(u)-G\left( 0\right) \right] &
=R^{-1/2}\sum_{i=1}^{R}\left[ I\{\xi _{i}\leq 0\}-G\left( 0\right) \right]
+R^{-1/2}\sum_{i=1}^{R}\left[ G(ul_{T}^{-1})-G(0)\right]  \label{theta-r,t}
\\
& +R^{-1/2}\sum_{i=1}^{R}\left[ I\{0\leq \xi _{i}\leq
ul_{T}^{-1}\}-(G(ul_{T}^{-1})-G(0))\right]  \notag \\
& =I+II+III.  \notag
\end{align}%
Consider $III$, and note that the random variable $I\{0\leq \xi _{i}\leq
ul_{T}^{-1}\}$ has expected value $\Delta G(ul_{T}^{-1})=G(ul_{T}^{-1})-G(0)$
and variance $\Delta G(ul_{T}^{-1})\left[ 1-\Delta G(ul_{T}^{-1})\right] $.
Since the $\xi _{i}$s are \textit{i.i.d.}, it holds that 
\begin{eqnarray*}
&&E^{\ast }\int_{-\infty }^{\infty }\left\vert R^{-1/2}\sum_{i=1}^{R}\left[
I\{0\leq \xi _{i}\leq ul_{T}^{-1}\}-\Delta G(ul_{T}^{-1})\right] \right\vert
^{2}dF\left( u\right) \\
&=&\int_{-\infty }^{\infty }E^{\ast }\left[ I\{0\leq \xi _{1}\leq
ul_{T}^{-1}\}-\Delta G(ul_{T}^{-1})\right] ^{2}dF\left( u\right) \\
&=&\int_{-\infty }^{\infty }\Delta G(ul_{T}^{-1})\left[ 1-\Delta
G(ul_{T}^{-1})\right] dF\left( u\right) \\
&\leq &\int_{-\infty }^{\infty }\Delta G(ul_{T}^{-1})dF\left( u\right) \leq 
\frac{m_{G}}{l_{T}}\int_{-\infty }^{\infty }|u|dF(u);
\end{eqnarray*}%
thus, Assumption \ref{as-5} and Markov inequality entail that $%
III=o_{P^{\ast }}\left( 1\right) $. As far as $II$ is concerned, it
immediately follows that%
\begin{equation*}
\int_{-\infty }^{\infty }\left( R^{1/2}\left[ G(ul_{T}^{-1})-G(0)\right]
\right) ^{2}dF\left( u\right) \leq \frac{R}{l_{T}^{2}}m_{G}\int_{-\infty
}^{\infty }|u|^{2}dF(u);
\end{equation*}%
by Assumption \ref{as-5} and (\ref{restriction}), this also tends to zero.
Hence%
\begin{align*}
G\left( 0\right) \left( 1-G\left( 0\right) \right) \Theta _{T,R}&
=\int_{-\infty }^{\infty }\left\vert \frac{1}{R^{1/2}}\sum_{i=1}^{R}\left[
I\{\xi _{i}\leq 0\}-G\left( 0\right) \right] \right\vert
^{2}dF(u)+o_{P^{\ast }}(1) \\
& =\left\vert \frac{1}{R^{1/2}}\sum_{i=1}^{R}\left[ I\{\xi _{i}\leq
0\}-G\left( 0\right) \right] \right\vert ^{2}+o_{P^{\ast }}(1),
\end{align*}%
and therefore the result follows from the Central Limit Theorem for
Bernoulli random variables (see \citealp{chow2012}).

We now turn to (\ref{asy-alternative}). In this case, note that 
\begin{equation*}
E^{\ast }\left[ R^{-1/2}\sum_{i=1}^{R}(I\{\xi _{i}\leq ul_{T}^{-1}\}-G(0))%
\right] ^{2}=E^{\ast }\left[ I\{\xi _{1}\leq ul_{T}^{-1}\}-G(ul_{T}^{-1})%
\right] ^{2}+R\left\vert G(ul_{T}^{-1})-G(0))\right\vert ^{2}.
\end{equation*}%
Since $E^{\ast }\left[ I\{\xi _{1}\leq ul_{T}^{-1}\}-G(ul_{T}^{-1})\right]
^{2}<\infty $, and since (\ref{lt-alternative-as}) entails that we can
assume that $\lim_{T\rightarrow \infty }l_{T}=0$, by Markov inequality it
holds that%
\begin{equation*}
\int_{-\infty }^{\infty }\left[ R^{-1/2}\sum_{i=1}^{R}(I\{\xi _{i}\leq
ul_{T}^{-1}\}-G(0))\right] ^{2}dF\left( u\right) =R\left( 1-G\left( 0\right)
\right) +o\left( R\right) +O_{P^{\ast }}\left( 1\right) ,
\end{equation*}%
for almost all realisations of $\left\{ b_{t},e_{t},-\infty <t<\infty
\right\} $. Hence, (\ref{asy-alternative}) follows immediately. 
\end{proof}

\begin{proof}[Proof of Corollary \protect\ref{strong-rule}]
Let $Y_{s}=I\left( \Theta _{R,T}^{\left( s\right) }\leq c_{\alpha }\right) $%
, and note that this is an i.i.d. sequence with all moments finite. Equation
(\ref{strong-alt}) follows upon noting that%
\begin{equation*}
\frac{1}{S}\sum_{s=1}^{S}Y_{s}=\frac{1}{S}\sum_{s=1}^{S}\left(
Y_{s}-E^{^{\ast }}Y_{s}\right) +E^{^{\ast }}Y_{s};
\end{equation*}%
the first term converges to zero on account of the strong LLN (conditional
on the sample), whereas the second one, $E^{^{\ast }}Y_{s}$, drifts to zero
in the ordinary limit sense by Theorem \ref{theta}, again conditional on the
sample. As far as (\ref{strong-nul}) is concerned, by the Law of the
Iterated Logarithm, there exists a large, random $S_{0}$ such that, for all $%
S\geq S_{0}$%
\begin{equation*}
\sqrt{\frac{S}{\ln \ln S}}\frac{1}{S}\sum_{s=1}^{S}Y_{s}\geq -\sqrt{2\alpha
\left( 1-\alpha \right) }+\sqrt{\frac{S}{\ln \ln S}}E^{^{\ast }}Y_{s};
\end{equation*}%
also, by Theorem \ref{theta}, under $H_{0}$ it holds that $E^{^{\ast
}}Y_{s}=1-\alpha +o\left( 1\right) $, conditional on the sample. Equation (%
\ref{strong-nul}) follows immediately. 
\end{proof}

\begin{proof}[Proof of Theorem \protect\ref{dt-deterministics}]
The proof repeats the arguments above, and therefore we only report its main
passages. We let $d_{t}^{\ast }$ be equal to $d_{t}$ or $\widehat{d}%
_{t}-d_{t}$, depending on which case is considered. We begin with the case
of $H_{0}$, where we aim to show that%
\begin{equation}
\frac{1}{T-p}\sum_{t=p+1}^{T}\frac{Y_{t}^{2}}{v_{p}+Y_{t}^{2}}<1\text{ a.s.}
\label{null-lessthan1}
\end{equation}%
Consider first the case where $E\left\vert \overline{X}_{0}\right\vert
^{2}<\infty $. In this case, it is immediate to see that $v_{p}$ $=$ $%
p^{-1}\sum_{t=1}^{p}E\left( \overline{X}_{0}+d_{t}^{\ast }\right) ^{2}$ $+$ $%
o\left( 1\right) $, by the ergodic theorem. When $d_{t}^{\ast }=d_{t}$,
Assumption \ref{as-8}\textit{(ii)} ensures that $v_{p}>0$; otherwise,
Assumptions \ref{as-2}\textit{(i)} and \ref{as-8}\textit{(i)}(b) imply the
same result - note that it does not matter what limit $v_{p}$ converges to,
as long as it holds that such limit is positive. Then the same arguments as
in the proof of Lemma \ref{ergodic} yield (\ref{null-lessthan1}). Let $%
\overline{Y}_{t}=\overline{X}_{t}+d_{t}^{\ast }$. When $E\left\vert 
\overline{X}_{0}\right\vert ^{2}=\infty $, we have%
\begin{equation*}
\left\vert v_{p}-\frac{1}{p}\sum_{t=1}^{p}\overline{Y}_{t}^{2}\right\vert
\leq C_{0}\frac{1}{p}\sum_{t=1}^{p}\left\vert \overline{X}_{t}+d_{t}^{\ast
}\right\vert \left\vert \overline{X}_{t}-X_{t}\right\vert .
\end{equation*}%
We already know that $\sum_{t=1}^{p}\left\vert \overline{X}_{t}\right\vert
\left\vert \overline{X}_{t}-X_{t}\right\vert =O\left( 1\right) $. Also,
since $\sum_{t=1}^{p}\left\vert \overline{X}_{t}-X_{t}\right\vert
^{2}=O\left( 1\right) $, by the Cauchy-Schwartz inequality we have $%
\sum_{t=1}^{p}\left\vert d_{t}^{\ast }\right\vert \left\vert \overline{X}%
_{t}-X_{t}\right\vert \leq C_{o}\left( \sum_{t=1}^{p}\left\vert d_{t}^{\ast
}\right\vert ^{2}\right) ^{1/2}$; under Assumption \ref{as-8}\textit{(i)}%
(a), this entails that $\sum_{t=1}^{p}\left\vert d_{t}^{\ast }\right\vert
\left\vert \overline{X}_{t}-X_{t}\right\vert =O\left( p^{1/2}\right) $.
Conversely, when Assumption \ref{as-8}\textit{(i)}(b) holds, by using Lemma %
\ref{horvath-trapani} we have $\sum_{t=1}^{p}\left\vert d_{t}^{\ast
}\right\vert ^{2}=o\left( 1\right) $. Thus, in both cases%
\begin{equation*}
v_{p}=\frac{1}{p}\sum_{t=1}^{p}\overline{X}_{t}^{2}+\frac{1}{p}%
\sum_{t=1}^{p}d_{t}^{\ast 2}+\frac{2}{p}\sum_{t=1}^{p}d_{t}^{\ast }\overline{%
X}_{t}+o\left( 1\right) ;
\end{equation*}%
the second term is at most $O\left( 1\right) $, and therefore the third one
can be shown to be dominated by applying the Cauchy-Schwartz inequality.
Lemma \ref{vp_nonergodic} now entails that $v_{p}\rightarrow \infty $ a.s.;
also, by the same arguments as in (\ref{ergodic}) 
\begin{equation*}
\frac{1}{T-p}\sum_{t=p+1}^{T}\frac{Y_{t}^{2}}{v_{p}+Y_{t}^{2}}=\frac{1}{T-p}%
\sum_{t=p+1}^{T}\frac{\overline{Y}_{t}^{2}}{v_{p}+\overline{Y}_{t}^{2}}%
+o\left( 1\right) .
\end{equation*}%
Thus, for every $C_{0}$ there is a random $p_{0}$ such that, when $p\geq p_{0}$%
\begin{equation*}
\frac{1}{T-p}\sum_{t=p+1}^{T}\frac{\overline{Y}_{t}^{2}}{v_{p}+\overline{Y}%
_{t}^{2}}\leq \frac{1}{T-p}\sum_{t=p+1}^{T}\frac{\overline{Y}_{t}^{2}}{C_{0}+%
\overline{Y}_{t}^{2}}\text{ a.s.}
\end{equation*}%
Now, for $0<\delta <2$ such that $E\left\vert \overline{X}_{0}\right\vert
^{\delta }<\infty $\ we have%
\begin{eqnarray}
\frac{1}{T-p}\sum_{t=p+1}^{T}\frac{\overline{Y}_{t}^{2}}{C_{0}+\overline{Y}%
_{t}^{2}} &\leq &\frac{1}{T-p}\sum_{t=p+1}^{T}\left\vert \frac{\overline{Y}%
_{t}^{2}}{C_{0}+\overline{Y}_{t}^{2}}\right\vert ^{\delta /2}\leq
C_{0}^{-\delta /2}\frac{1}{T-p}\sum_{t=p+1}^{T}\left\vert \overline{Y}%
_{t}\right\vert ^{\delta }  \notag \\
&\leq &C_{1}C_{0}^{-\delta /2}\frac{1}{T-p}\sum_{t=p+1}^{T}\left( \left\vert 
\overline{X}_{t}\right\vert ^{\delta }+\left\vert d_{t}^{\ast }\right\vert
^{\delta }\right)   \notag \\
&=&C_{1}C_{0}^{-\delta /2}E\left\vert \overline{X}_{0}\right\vert ^{\delta
}+C_{1}C_{0}^{-\delta /2}\frac{1}{T-p}\sum_{t=p+1}^{T}\left\vert d_{t}^{\ast
}\right\vert ^{\delta }\leq C_{2}C_{0}^{-\delta /2},  \label{limit-1}
\end{eqnarray}%
where $C_{2}<\infty $. This follows directly when Assumption \ref{as-8}%
\textit{(i)}(a) holds; under Assumption \ref{as-8}\textit{(i)}(b), it can be
shown by elementary arguments that $E\left\vert \sum_{t=p+1}^{T}\left\vert
d_{t}^{\ast }\right\vert ^{\delta }\right\vert $ $\leq $ $%
\sum_{t=p+1}^{T}E\left\vert d_{t}^{\ast }\right\vert ^{2}$ $\leq $ $%
C_{0}T^{1-\epsilon _{1}}$, so that Lemma \ref{horvath-trapani} ensures that $%
\left( T-p\right) ^{-1}\sum_{t=p+1}^{T}\left\vert d_{t}^{\ast }\right\vert
^{\delta }$ $=$ $o\left( 1\right) $, whence (\ref{limit-1}). In conclusion,
for any $\epsilon >0$ there is a random $p_{0}$ such that for all $p\geq p_{0}$%
\begin{equation*}
\frac{1}{T-p}\sum_{t=p+1}^{T}\frac{Y_{t}^{2}}{v_{p}+Y_{t}^{2}}\leq \epsilon 
\text{ a.s.,}
\end{equation*}%
which proves (\ref{null-lessthan1}) even in the case of infinite variance.
Thus, the same passages as above yield that, under $H_{0}$, $D_{T}^{\ast
}=C_{0}+o\left( 1\right) $ for some $0<C_{0}<\infty $.

Under $H_{A}$, the proof that $D_{T}^{\ast }=O\left( T^{-\epsilon }\right) $
follows immediately if we show that Lemmas \ref{7.4-ht} and \ref{berkes}
hold. However, this can be easily verified by noting that $P\left(
\left\vert Y_{t}\right\vert \leq t^{\alpha }\right) \leq P\left( \left\vert
X_{t}\right\vert \leq t^{\alpha }+\left\vert d_{t}^{\ast }\right\vert
\right) $.
\end{proof}

%\singlespacing
{\small {\ }}

{\small {\setlength{\bibsep}{.2cm} 
\bibliographystyle{chicago}
\bibliography{LTbiblio}
}}

%\newpage
\begin{landscape}

\begin{center}
\begin{tabular}{ll|llll|llll|llll}
&  & \multicolumn{4}{|c|}{$e_{t}\sim N\left( 0,1\right) $} & 
\multicolumn{4}{|c|}{$e_{t}\sim t_{2}$} & \multicolumn{4}{|c}{$e_{t}\sim
t_{1}$} \\ 
&  & $250\ $ & $500\ $ & $1000$ & $2000$ & $250\ $ & $500\ $ & $1000$ & $2000
$ & $250\ $ & $500\ $ & $1000$ & $2000$ \\ 
$\varphi $ & $\sigma _{b}^{2}$ &  &  &  &  &  &  &  &  &  &  &  &  \\ 
$1.05$ & \multicolumn{1}{c|}{%
\begin{tabular}{l}
$\underset{}{0}$ \\ 
$\underset{}{0.10}$ \\ 
$\underset{}{0.25}$%
\end{tabular}%
} & $%
\begin{array}{c}
\underset{\left( 0.00\right) }{0.989} \\ 
\underset{\left( 0.12\right) }{0.389} \\ 
\underset{\left( 0.73\right) }{0.071}%
\end{array}%
$ & $%
\begin{array}{c}
\underset{\left( 0.00\right) }{0.981} \\ 
\underset{\left( 0.09\right) }{0.410} \\ 
\underset{\left( 0.93\right) }{0.060}%
\end{array}%
$ & $%
\begin{array}{c}
\underset{\left( 0.00\right) }{0.998} \\ 
\underset{\left( 0.07\right) }{0.519} \\ 
\underset{\left( 0.94\right) }{0.059}%
\end{array}%
$ & $%
\begin{array}{c}
\underset{\left( 0.00\right) }{1.000} \\ 
\underset{\left( 0.05\right) }{0.628} \\ 
\underset{\left( 0.96\right) }{0.051}%
\end{array}%
$ & $%
\begin{array}{c}
\underset{\left( 0.00\right) }{0.889} \\ 
\underset{\left( 0.11\right) }{0.473} \\ 
\underset{\left( 0.64\right) }{0.065}%
\end{array}%
$ & $%
\begin{array}{c}
\underset{\left( 0.00\right) }{0.990} \\ 
\underset{\left( 0.08\right) }{0.413} \\ 
\underset{\left( 0.81\right) }{0.055}%
\end{array}%
$ & $%
\begin{array}{c}
\underset{\left( 0.00\right) }{0.998} \\ 
\underset{\left( 0.07\right) }{0.556} \\ 
\underset{\left( 0.90\right) }{0.057}%
\end{array}%
$ & $%
\begin{array}{c}
\underset{\left( 0.00\right) }{1.000} \\ 
\underset{\left( 0.05\right) }{0.619} \\ 
\underset{\left( 0.99\right) }{0.052}%
\end{array}%
$ & $%
\begin{array}{c}
\underset{\left( 0.00\right) }{0.647} \\ 
\underset{\left( 0.12\right) }{0.266} \\ 
\underset{\left( 0.50\right) }{0.055}%
\end{array}%
$ & $%
\begin{array}{c}
\underset{\left( 0.00\right) }{0.995} \\ 
\underset{\left( 0.09\right) }{0.398} \\ 
\underset{\left( 0.78\right) }{0.050}%
\end{array}%
$ & $%
\begin{array}{c}
\underset{\left( 0.00\right) }{1.000} \\ 
\underset{\left( 0.07\right) }{0.434} \\ 
\underset{\left( 0.91\right) }{0.060}%
\end{array}%
$ & $%
\begin{array}{c}
\underset{\left( 0.00\right) }{1.000} \\ 
\underset{\left( 0.04\right) }{0.579} \\ 
\underset{\left( 0.96\right) }{0.055}%
\end{array}%
$ \\ 
$1$ & 
\begin{tabular}{l}
$\underset{}{0}$ \\ 
$\underset{}{0.10}$ \\ 
$\underset{}{0.25}$%
\end{tabular}
& $%
\begin{array}{c}
\underset{\left( 0.06\right) }{0.484} \\ 
\underset{\left( 0.90\right) }{0.068} \\ 
\underset{\left( 0.93\right) }{0.050}%
\end{array}%
$ & $%
\begin{array}{c}
\underset{\left( 0.05\right) }{0.525} \\ 
\underset{\left( 0.93\right) }{0.059} \\ 
\underset{\left( 0.93\right) }{0.056}%
\end{array}%
$ & $%
\begin{array}{c}
\underset{\left( 0.03\right) }{0.687} \\ 
\underset{\left( 0.94\right) }{0.059} \\ 
\underset{\left( 0.97\right) }{0.059}%
\end{array}%
$ & $%
\begin{array}{c}
\underset{\left( 0.01\right) }{0.754} \\ 
\underset{\left( 0.97\right) }{0.052} \\ 
\underset{\left( 0.99\right) }{0.051}%
\end{array}%
$ & $%
\begin{array}{c}
\underset{\left( 0.06\right) }{0.350} \\ 
\underset{\left( 0.80\right) }{0.074} \\ 
\underset{\left( 0.78\right) }{0.060}%
\end{array}%
$ & $%
\begin{array}{c}
\underset{\left( 0.04\right) }{0.564} \\ 
\underset{\left( 0.82\right) }{0.061} \\ 
\underset{\left( 0.84\right) }{0.053}%
\end{array}%
$ & $%
\begin{array}{c}
\underset{\left( 0.01\right) }{0.711} \\ 
\underset{\left( 0.89\right) }{0.057} \\ 
\underset{\left( 0.88\right) }{0.057}%
\end{array}%
$ & $%
\begin{array}{c}
\underset{\left( 0.02\right) }{0.796} \\ 
\underset{\left( 0.90\right) }{0.053} \\ 
\underset{\left( 0.91\right) }{0.051}%
\end{array}%
$ & $%
\begin{array}{c}
\underset{\left( 0.04\right) }{0.432} \\ 
\underset{\left( 0.81\right) }{0.090} \\ 
\underset{\left( 0.78\right) }{0.052}%
\end{array}%
$ & $%
\begin{array}{c}
\underset{\left( 0.04\right) }{0.537} \\ 
\underset{\left( 0.77\right) }{0.056} \\ 
\underset{\left( 0.82\right) }{0.047}%
\end{array}%
$ & $%
\begin{array}{c}
\underset{\left( 0.01\right) }{0.781} \\ 
\underset{\left( 0.87\right) }{0.059} \\ 
\underset{\left( 0.90\right) }{0.058}%
\end{array}%
$ & $%
\begin{array}{c}
\underset{\left( 0.00\right) }{0.857} \\ 
\underset{\left( 0.92\right) }{0.051} \\ 
\underset{\left( 0.92\right) }{0.051}%
\end{array}%
$ \\ 
$0.95$ & 
\begin{tabular}{l}
$\underset{}{0}$ \\ 
$\underset{}{0.10}$ \\ 
$\underset{}{0.25}$%
\end{tabular}
& $%
\begin{array}{c}
\underset{\left( 0.79\right) }{0.054} \\ 
\underset{\left( 0.90\right) }{0.049} \\ 
\underset{\left( 0.91\right) }{0.048}%
\end{array}%
$ & $%
\begin{array}{c}
\underset{\left( 0.88\right) }{0.056} \\ 
\underset{\left( 0.90\right) }{0.056} \\ 
\underset{\left( 0.91\right) }{0.055}%
\end{array}%
$ & $%
\begin{array}{c}
\underset{\left( 0.90\right) }{0.060} \\ 
\underset{\left( 0.94\right) }{0.058} \\ 
\underset{\left( 0.94\right) }{0.058}%
\end{array}%
$ & $%
\begin{array}{c}
\underset{\left( 0.92\right) }{0.051} \\ 
\underset{\left( 0.96\right) }{0.051} \\ 
\underset{\left( 0.96\right) }{0.051}%
\end{array}%
$ & $%
\begin{array}{c}
\underset{\left( 0.60\right) }{0.051} \\ 
\underset{\left( 0.64\right) }{0.058} \\ 
\underset{\left( 0.71\right) }{0.052}%
\end{array}%
$ & $%
\begin{array}{c}
\underset{\left( 0.71\right) }{0.053} \\ 
\underset{\left( 0.73\right) }{0.053} \\ 
\underset{\left( 0.81\right) }{0.053}%
\end{array}%
$ & $%
\begin{array}{c}
\underset{\left( 0.79\right) }{0.057} \\ 
\underset{\left( 0.82\right) }{0.057} \\ 
\underset{\left( 0.86\right) }{0.057}%
\end{array}%
$ & $%
\begin{array}{c}
\underset{\left( 0.90\right) }{0.055} \\ 
\underset{\left( 0.90\right) }{0.051} \\ 
\underset{\left( 0.94\right) }{0.051}%
\end{array}%
$ & $%
\begin{array}{c}
\underset{\left( 0.52\right) }{0.053} \\ 
\underset{\left( 0.55\right) }{0.052} \\ 
\underset{\left( 0.58\right) }{0.051}%
\end{array}%
$ & $%
\begin{array}{c}
\underset{\left( 0.66\right) }{0.054} \\ 
\underset{\left( 0.57\right) }{0.047} \\ 
\underset{\left( 0.59\right) }{0.047}%
\end{array}%
$ & $%
\begin{array}{c}
\underset{\left( 0.70\right) }{0.056} \\ 
\underset{\left( 0.67\right) }{0.058} \\ 
\underset{\left( 0.76\right) }{0.058}%
\end{array}%
$ & $%
\begin{array}{c}
\underset{\left( 0.85\right) }{0.060} \\ 
\underset{\left( 0.95\right) }{0.051} \\ 
\underset{\left( 0.99\right) }{0.051}%
\end{array}%
$ \\ 
$0.5$ & 
\begin{tabular}{l}
$\underset{}{0}$ \\ 
$\underset{}{0.10}$ \\ 
$\underset{}{0.25}$%
\end{tabular}
& $%
\begin{array}{c}
\underset{\left( 0.90\right) }{0.048} \\ 
\underset{\left( 0.91\right) }{0.048} \\ 
\underset{\left( 0.93\right) }{0.048}%
\end{array}%
$ & $%
\begin{array}{c}
\underset{\left( 0.91\right) }{0.055} \\ 
\underset{\left( 0.96\right) }{0.055} \\ 
\underset{\left( 0.95\right) }{0.055}%
\end{array}%
$ & $%
\begin{array}{c}
\underset{\left( 0.95\right) }{0.058} \\ 
\underset{\left( 0.95\right) }{0.058} \\ 
\underset{\left( 0.98\right) }{0.058}%
\end{array}%
$ & $%
\begin{array}{c}
\underset{\left( 1.00\right) }{0.051} \\ 
\underset{\left( 0.99\right) }{0.051} \\ 
\underset{\left( 0.99\right) }{0.051}%
\end{array}%
$ & $%
\begin{array}{c}
\underset{\left( 0.92\right) }{0.050} \\ 
\underset{\left( 0.95\right) }{0.050} \\ 
\underset{\left( 0.93\right) }{0.050}%
\end{array}%
$ & $%
\begin{array}{c}
\underset{\left( 0.92\right) }{0.053} \\ 
\underset{\left( 0.91\right) }{0.053} \\ 
\underset{\left( 0.92\right) }{0.053}%
\end{array}%
$ & $%
\begin{array}{c}
\underset{\left( 0.96\right) }{0.057} \\ 
\underset{\left( 0.95\right) }{0.057} \\ 
\underset{\left( 0.96\right) }{0.057}%
\end{array}%
$ & $%
\begin{array}{c}
\underset{\left( 0.99\right) }{0.051} \\ 
\underset{\left( 0.99\right) }{0.051} \\ 
\underset{\left( 0.99\right) }{0.051}%
\end{array}%
$ & $%
\begin{array}{c}
\underset{\left( 0.92\right) }{0.051} \\ 
\underset{\left( 0.95\right) }{0.051} \\ 
\underset{\left( 0.95\right) }{0.051}%
\end{array}%
$ & $%
\begin{array}{c}
\underset{\left( 0.90\right) }{0.047} \\ 
\underset{\left( 0.91\right) }{0.047} \\ 
\underset{\left( 0.95\right) }{0.047}%
\end{array}%
$ & $%
\begin{array}{c}
\underset{\left( 0.93\right) }{0.057} \\ 
\underset{\left( 0.99\right) }{0.058} \\ 
\underset{\left( 0.99\right) }{0.058}%
\end{array}%
$ & $%
\begin{array}{c}
\underset{\left( 0.98\right) }{0.051} \\ 
\underset{\left( 0.99\right) }{0.051} \\ 
\underset{\left( 0.99\right) }{0.051}%
\end{array}%
$ \\ 
$0.0$ & 
\begin{tabular}{l}
$\underset{}{0}$ \\ 
$\underset{}{0.10}$ \\ 
$\underset{}{0.25}$%
\end{tabular}
& $%
\begin{array}{c}
\underset{\left( 0.92\right) }{0.048} \\ 
\underset{\left( 0.93\right) }{0.048} \\ 
\underset{\left( 0.93\right) }{0.048}%
\end{array}%
$ & $%
\begin{array}{c}
\underset{\left( 0.93\right) }{0.056} \\ 
\underset{\left( 0.95\right) }{0.055} \\ 
\underset{\left( 0.96\right) }{0.055}%
\end{array}%
$ & $%
\begin{array}{c}
\underset{\left( 0.96\right) }{0.058} \\ 
\underset{\left( 0.98\right) }{0.058} \\ 
\underset{\left( 0.98\right) }{0.058}%
\end{array}%
$ & $%
\begin{array}{c}
\underset{\left( 0.99\right) }{0.051} \\ 
\underset{\left( 0.99\right) }{0.051} \\ 
\underset{\left( 0.99\right) }{0.051}%
\end{array}%
$ & $%
\begin{array}{c}
\underset{\left( 0.95\right) }{0.050} \\ 
\underset{\left( 0.95\right) }{0.050} \\ 
\underset{\left( 0.95\right) }{0.050}%
\end{array}%
$ & $%
\begin{array}{c}
\underset{\left( 0.93\right) }{0.053} \\ 
\underset{\left( 0.94\right) }{0.053} \\ 
\underset{\left( 0.95\right) }{0.053}%
\end{array}%
$ & $%
\begin{array}{c}
\underset{\left( 0.96\right) }{0.057} \\ 
\underset{\left( 0.95\right) }{0.057} \\ 
\underset{\left( 0.96\right) }{0.057}%
\end{array}%
$ & $%
\begin{array}{c}
\underset{\left( 0.99\right) }{0.051} \\ 
\underset{\left( 0.99\right) }{0.051} \\ 
\underset{\left( 0.99\right) }{0.051}%
\end{array}%
$ & $%
\begin{array}{c}
\underset{\left( 0.94\right) }{0.051} \\ 
\underset{\left( 0.95\right) }{0.051} \\ 
\underset{\left( 0.95\right) }{0.051}%
\end{array}%
$ & $%
\begin{array}{c}
\underset{\left( 0.94\right) }{0.047} \\ 
\underset{\left( 0.92\right) }{0.047} \\ 
\underset{\left( 0.93\right) }{0.047}%
\end{array}%
$ & $%
\begin{array}{c}
\underset{\left( 0.95\right) }{0.057} \\ 
\underset{\left( 0.99\right) }{0.058} \\ 
\underset{\left( 0.99\right) }{0.058}%
\end{array}%
$ & $%
\begin{array}{c}
\underset{\left( 0.99\right) }{0.051} \\ 
\underset{\left( 0.99\right) }{0.051} \\ 
\underset{\left( 0.99\right) }{0.051}%
\end{array}%
$%
\end{tabular}

\bigskip
\end{center}

\textbf{Table 1. Empirical rejection frequencies for the test for }$%
H_{0}:X_{t}$\textbf{\ is stationary. }\newpage

\begin{center}
\begin{tabular}{cl|llll|llll|llll}
&  & \multicolumn{4}{|c|}{$e_{t}\sim N\left( 0,1\right) $} & 
\multicolumn{4}{|c|}{$e_{t}\sim t_{2}$} & \multicolumn{4}{|c}{$e_{t}\sim
t_{1}$} \\ 
& $T$ & $250$ \  & $500$ \  & $1000$ & $2000$ & $250\ $ & $500\ $ & $1000$ & 
$2000$ & $250\ $ & $500\ $ & $1000$ & $2000$ \\ 
$\left( \varphi ,\sigma _{b}^{2}\right) $ &  &  &  &  &  &  &  &  &  &  &  & 
&  \\ 
$\left( 0.2,3.6190\right) $ &  & $\underset{\left( 0.12\right) }{0.574}$ & $%
\underset{\left( 0.09\right) }{0.657}$ & $\underset{\left( 0.07\right) }{%
0.789}$ & $\underset{\left( 0.02\right) }{0.913}$ & $\underset{\left(
0.12\right) }{0.994}$ & $\underset{\left( 0.06\right) }{1.000}$ & $\underset{%
\left( 0.02\right) }{1.000}$ & $\underset{\left( 0.01\right) }{1.000}$ & $%
\underset{\left( 0.10\right) }{0.997}$ & $\underset{\left( 0.06\right) }{%
1.000}$ & $\underset{\left( 0.03\right) }{1.000}$ & \multicolumn{1}{l}{$%
\underset{\left( 0.01\right) }{1.000}$} \\ 
$\left( 0.3,3.5556\right) $ &  & $\underset{\left( 0.12\right) }{0.556}$ & $%
\underset{\left( 0.10\right) }{0.662}$ & $\underset{\left( 0.08\right) }{%
0.781}$ & $\underset{\left( 0.02\right) }{0.903}$ & $\underset{\left(
0.11\right) }{0.985}$ & $\underset{\left( 0.08\right) }{0.999}$ & $\underset{%
\left( 0.03\right) }{1.000}$ & $\underset{\left( 0.02\right) }{1.000}$ & $%
\underset{\left( 0.09\right) }{0.994}$ & $\underset{\left( 0.07\right) }{%
1.000}$ & $\underset{\left( 0.02\right) }{1.000}$ & \multicolumn{1}{l}{$%
\underset{\left( 0.01\right) }{1.000}$} \\ 
$\left( 0.4,3.4460\right) $ &  & $\underset{\left( 0.11\right) }{0.551}$ & $%
\underset{\left( 0.10\right) }{0.624}$ & $\underset{\left( 0.07\right) }{%
0.756}$ & $\underset{\left( 0.02\right) }{0.964}$ & $\underset{\left(
0.10\right) }{0.988}$ & $\underset{\left( 0.07\right) }{0.999}$ & $\underset{%
\left( 0.02\right) }{1.000}$ & $\underset{\left( 0.02\right) }{1.000}$ & $%
\underset{\left( 0.09\right) }{0.998}$ & $\underset{\left( 0.06\right) }{%
0.999}$ & $\underset{\left( 0.03\right) }{1.000}$ & \multicolumn{1}{l}{$%
\underset{\left( 0.00\right) }{1.000}$} \\ 
$\left( 0.5,3.3390\right) $ &  & $\underset{\left( 0.12\right) }{0.553}$ & $%
\underset{\left( 0.09\right) }{0.614}$ & $\underset{\left( 0.07\right) }{%
0.747}$ & $\underset{\left( 0.02\right) }{0.957}$ & $\underset{\left(
0.10\right) }{0.994}$ & $\underset{\left( 0.07\right) }{0.999}$ & $\underset{%
\left( 0.02\right) }{1.000}$ & $\underset{\left( 0.01\right) }{1.000}$ & $%
\underset{\left( 0.10\right) }{0.974}$ & $\underset{\left( 0.06\right) }{%
0.999}$ & $\underset{\left( 0.03\right) }{1.000}$ & \multicolumn{1}{l}{$%
\underset{\left( 0.01\right) }{1.000}$} \\ 
$\left( 0.6,3.2245\right) $ &  & $\underset{\left( 0.10\right) }{0.556}$ & $%
\underset{\left( 0.10\right) }{0.614}$ & $\underset{\left( 0.08\right) }{%
0.754}$ & $\underset{\left( 0.01\right) }{0.968}$ & $\underset{\left(
0.11\right) }{0.996}$ & $\underset{\left( 0.07\right) }{0.999}$ & $\underset{%
\left( 0.04\right) }{1.000}$ & $\underset{\left( 0.01\right) }{1.000}$ & $%
\underset{\left( 0.09\right) }{0.998}$ & $\underset{\left( 0.07\right) }{%
1.000}$ & $\underset{\left( 0.04\right) }{1.000}$ & \multicolumn{1}{l}{$%
\underset{\left( 0.00\right) }{1.000}$} \\ 
$\left( 0.7,3.1310\right) $ &  & $\underset{\left( 0.11\right) }{0.553}$ & $%
\underset{\left( 0.10\right) }{0.670}$ & $\underset{\left( 0.07\right) }{%
0.792}$ & $\underset{\left( 0.02\right) }{0.906}$ & $\underset{\left(
0.11\right) }{0.993}$ & $\underset{\left( 0.06\right) }{0.998}$ & $\underset{%
\left( 0.03\right) }{1.000}$ & $\underset{\left( 0.02\right) }{1.000}$ & $%
\underset{\left( 0.09\right) }{0.919}$ & $\underset{\left( 0.06\right) }{%
0.999}$ & $\underset{\left( 0.02\right) }{1.000}$ & \multicolumn{1}{l}{$%
\underset{\left( 0.01\right) }{1.000}$} \\ 
$\left( 0.8,2.8650\right) $ &  & $\underset{\left( 0.12\right) }{0.537}$ & $%
\underset{\left( 0.09\right) }{0.687}$ & $\underset{\left( 0.08\right) }{%
0.701}$ & $\underset{\left( 0.01\right) }{0.900}$ & $\underset{\left(
0.10\right) }{0.991}$ & $\underset{\left( 0.06\right) }{0.998}$ & $\underset{%
\left( 0.02\right) }{1.000}$ & $\underset{\left( 0.02\right) }{1.000}$ & $%
\underset{\left( 0.09\right) }{0.999}$ & $\underset{\left( 0.07\right) }{%
0.996}$ & $\underset{\left( 0.02\right) }{1.000}$ & \multicolumn{1}{l}{$%
\underset{\left( 0.01\right) }{1.000}$} \\ 
$\left( 0.9,2.6815\right) $ &  & $\underset{\left( 0.12\right) }{0.545}$ & $%
\underset{\left( 0.09\right) }{0.619}$ & $\underset{\left( 0,08\right) }{%
0.731}$ & $\underset{\left( 0.01\right) }{0.924}$ & $\underset{\left(
0.10\right) }{0.992}$ & $\underset{\left( 0.07\right) }{0.998}$ & $\underset{%
\left( 0.02\right) }{1.000}$ & $\underset{\left( 0.01\right) }{1.000}$ & $%
\underset{\left( 0.10\right) }{0.998}$ & $\underset{\left( 0.06\right) }{%
1.000}$ & $\underset{\left( 0.03\right) }{1.000}$ & \multicolumn{1}{l}{$%
\underset{\left( 0.01\right) }{1.000}$} \\ 
$\left( 1.0,2.4440\right) $ &  & $\underset{\left( 0.12\right) }{0.554}$ & $%
\underset{\left( 0.09\right) }{0.638}$ & $\underset{\left( 0.07\right) }{%
0.748}$ & $\underset{\left( 0.02\right) }{0.930}$ & $\underset{\left(
0.10\right) }{0.989}$ & $\underset{\left( 0.06\right) }{0.999}$ & $\underset{%
\left( 0.03\right) }{1.000}$ & $\underset{\left( 0.01\right) }{1.000}$ & $%
\underset{\left( 0.10\right) }{0.994}$ & $\underset{\left( 0.06\right) }{%
1.000}$ & $\underset{\left( 0.02\right) }{1.000}$ & \multicolumn{1}{l}{$%
\underset{\left( 0.00\right) }{1.000}$}%
\end{tabular}

\bigskip
\end{center}

\textbf{Table 2. Empirical rejection frequencies for the test for }$%
H_{0}:X_{t}$\textbf{\ is stationary. The cases considered in the table are
boundary cases in which }$E\ln \left\vert \varphi +b_{0}\right\vert =0$%
\textbf{\ - thus, }$X_{t}$\textbf{\ is nonstationary, and the figures in the
table should be interpreted as an assessment of the size of the test. }%
\newpage

\begin{center}
\begin{tabular}{ll|llll|llll|llll}
&  & \multicolumn{4}{|c|}{$e_{t}\sim N\left( 0,1\right) $} & 
\multicolumn{4}{|c|}{$e_{t}\sim t_{2}$} & \multicolumn{4}{|c}{$e_{t}\sim
t_{1}$} \\ 
&  & $250\ $ & $500\ $ & $1000$ & $2000$ & $250\ $ & $500\ $ & $1000$ & $%
2000 $ & $250\ $ & $500\ $ & $1000$ & $2000$ \\ 
$\varphi $ & $\sigma _{b}^{2}$ &  &  &  &  &  &  &  &  &  &  &  &  \\ 
$1.05$ & 
\begin{tabular}{l}
$0$ \\ 
$0.10$ \\ 
$0.25$%
\end{tabular}
& $%
\begin{array}{c}
0.052 \\ 
0.056 \\ 
0.096%
\end{array}%
$ & $%
\begin{array}{c}
0.052 \\ 
0.064 \\ 
0.219%
\end{array}%
$ & $%
\begin{array}{c}
0.049 \\ 
0.055 \\ 
0.290%
\end{array}%
$ & $%
\begin{array}{c}
0.051 \\ 
0.062 \\ 
0.465%
\end{array}%
$ & $%
\begin{array}{c}
0.053 \\ 
0.054 \\ 
0.102%
\end{array}%
$ & $%
\begin{array}{c}
0.054 \\ 
0.062 \\ 
0.180%
\end{array}%
$ & $%
\begin{array}{c}
0.052 \\ 
0.053 \\ 
0.240%
\end{array}%
$ & $%
\begin{array}{c}
0.058 \\ 
0.063 \\ 
0.483%
\end{array}%
$ & $%
\begin{array}{c}
0.053 \\ 
0.054 \\ 
0.064%
\end{array}%
$ & $%
\begin{array}{c}
0.055 \\ 
0.108 \\ 
0.129%
\end{array}%
$ & $%
\begin{array}{c}
0.053 \\ 
0.057 \\ 
0.345%
\end{array}%
$ & $%
\begin{array}{c}
0.060 \\ 
0.073 \\ 
0.489%
\end{array}%
$ \\ 
$1$ & 
\begin{tabular}{l}
$0$ \\ 
$0.10$ \\ 
$0.25$%
\end{tabular}
& $%
\begin{array}{c}
0.055 \\ 
0.096 \\ 
0.144%
\end{array}%
$ & $%
\begin{array}{c}
0.057 \\ 
0.306 \\ 
0.459%
\end{array}%
$ & $%
\begin{array}{c}
0.055 \\ 
0.469 \\ 
0.536%
\end{array}%
$ & $%
\begin{array}{c}
0.055 \\ 
0.639 \\ 
0.746%
\end{array}%
$ & $%
\begin{array}{c}
0.053 \\ 
0.091 \\ 
0.138%
\end{array}%
$ & $%
\begin{array}{c}
0.054 \\ 
0.209 \\ 
0.495%
\end{array}%
$ & $%
\begin{array}{c}
0.052 \\ 
0.345 \\ 
0.579%
\end{array}%
$ & $%
\begin{array}{c}
0.058 \\ 
0.622 \\ 
0.723%
\end{array}%
$ & $%
\begin{array}{c}
0.053 \\ 
0.169 \\ 
0.147%
\end{array}%
$ & $%
\begin{array}{c}
0.055 \\ 
0.260 \\ 
0.463%
\end{array}%
$ & $%
\begin{array}{c}
0.052 \\ 
0.442 \\ 
0.509%
\end{array}%
$ & $%
\begin{array}{c}
0.060 \\ 
0.630 \\ 
0.712%
\end{array}%
$ \\ 
$0.95$ & 
\begin{tabular}{l}
$0$ \\ 
$0.10$ \\ 
$0.25$%
\end{tabular}
& $%
\begin{array}{c}
0.222 \\ 
0.229 \\ 
0.238%
\end{array}%
$ & $%
\begin{array}{c}
0.497 \\ 
0.505 \\ 
0.535%
\end{array}%
$ & $%
\begin{array}{c}
0.579 \\ 
0.600 \\ 
0.630%
\end{array}%
$ & $%
\begin{array}{c}
0.780 \\ 
0.813 \\ 
0.833%
\end{array}%
$ & $%
\begin{array}{c}
0.159 \\ 
0.221 \\ 
0.233%
\end{array}%
$ & $%
\begin{array}{c}
0.421 \\ 
0.593 \\ 
0.541%
\end{array}%
$ & $%
\begin{array}{c}
0.549 \\ 
0.464 \\ 
0.674%
\end{array}%
$ & $%
\begin{array}{c}
0.886 \\ 
0.829 \\ 
0.831%
\end{array}%
$ & $%
\begin{array}{c}
0.154 \\ 
0.218 \\ 
0.256%
\end{array}%
$ & $%
\begin{array}{c}
0.467 \\ 
0.522 \\ 
0.505%
\end{array}%
$ & $%
\begin{array}{c}
0.582 \\ 
0.548 \\ 
0.630%
\end{array}%
$ & $%
\begin{array}{c}
0.754 \\ 
0.763 \\ 
0.802%
\end{array}%
$ \\ 
$0.5$ & 
\begin{tabular}{l}
$0$ \\ 
$0.10$ \\ 
$0.25$%
\end{tabular}
& $%
\begin{array}{c}
0.467 \\ 
0.457 \\ 
0.430%
\end{array}%
$ & $%
\begin{array}{c}
0.841 \\ 
0.833 \\ 
0.815%
\end{array}%
$ & $%
\begin{array}{c}
0.907 \\ 
0.900 \\ 
0.877%
\end{array}%
$ & $%
\begin{array}{c}
0.986 \\ 
0.982 \\ 
0.975%
\end{array}%
$ & $%
\begin{array}{c}
0.557 \\ 
0.533 \\ 
0.509%
\end{array}%
$ & $%
\begin{array}{c}
0.733 \\ 
0.770 \\ 
0.748%
\end{array}%
$ & $%
\begin{array}{c}
0.977 \\ 
0.854 \\ 
0.881%
\end{array}%
$ & $%
\begin{array}{c}
0.837 \\ 
0.925 \\ 
0.925%
\end{array}%
$ & $%
\begin{array}{c}
0.404 \\ 
0.402 \\ 
0.489%
\end{array}%
$ & $%
\begin{array}{c}
0.805 \\ 
0.876 \\ 
0.898%
\end{array}%
$ & $%
\begin{array}{c}
0.871 \\ 
0.974 \\ 
0.799%
\end{array}%
$ & $%
\begin{array}{c}
1.000 \\ 
0.929 \\ 
0.926%
\end{array}%
$ \\ 
$0.0$ & 
\begin{tabular}{l}
$0$ \\ 
$0.10$ \\ 
$0.25$%
\end{tabular}
& $%
\begin{array}{c}
0.541 \\ 
0.530 \\ 
0.503%
\end{array}%
$ & $%
\begin{array}{c}
0.907 \\ 
0.896 \\ 
0.866%
\end{array}%
$ & $%
\begin{array}{c}
0.939 \\ 
0.933 \\ 
0.910%
\end{array}%
$ & $%
\begin{array}{c}
0.995 \\ 
0.992 \\ 
0.989%
\end{array}%
$ & $%
\begin{array}{c}
0.407 \\ 
0.497 \\ 
0.519%
\end{array}%
$ & $%
\begin{array}{c}
0.902 \\ 
0.967 \\ 
0.905%
\end{array}%
$ & $%
\begin{array}{c}
1.000 \\ 
0.995 \\ 
0.894%
\end{array}%
$ & $%
\begin{array}{c}
0.930 \\ 
0.982 \\ 
0.962%
\end{array}%
$ & $%
\begin{array}{c}
0.546 \\ 
0.398 \\ 
0.668%
\end{array}%
$ & $%
\begin{array}{c}
0.874 \\ 
0.873 \\ 
0.870%
\end{array}%
$ & $%
\begin{array}{c}
0.954 \\ 
0.940 \\ 
1.000%
\end{array}%
$ & $%
\begin{array}{c}
1.000 \\ 
0.998 \\ 
0.993%
\end{array}%
$%
\end{tabular}

\bigskip
\end{center}

\textbf{Table 3. Empirical rejection frequencies for the test for }$%
H_{0}:X_{t}$\textbf{\ is nonstationary. }

\begin{center}
\newpage

\begin{tabular}{cl|llll|llll|llll}
&  & \multicolumn{4}{|c|}{$e_{t}\sim N\left( 0,1\right) $} & 
\multicolumn{4}{|c|}{$e_{t}\sim t_{2}$} & \multicolumn{4}{|c}{$e_{t}\sim
t_{1}$} \\ 
& $T$ & $250$ \  & $500$ \  & $1000$ & $2000$ & $250\ $ & $500\ $ & $1000$ & 
$2000$ & $250\ $ & $500\ $ & $1000$ & $2000$ \\ 
$\left( \varphi ,\sigma _{b}^{2}\right) $ &  &  &  &  &  &  &  &  &  &  &  & 
&  \\ 
$\left( 0.2,3.6190\right) $ &  & $0.051$ & $0.058$ & $0.053$ & $0.053$ & $%
0.054$ & $0.053$ & $0.059$ & $0.053$ & $0.054$ & $0.054$ & $0.058$ & $0.060$
\\ 
$\left( 0.3,3.5556\right) $ &  & $0.053$ & $0.058$ & $0.053$ & $0.053$ & $%
0.056$ & $0.060$ & $0.057$ & $0.053$ & $0.054$ & $0.056$ & $0.057$ & $0.053$
\\ 
$\left( 0.4,3.4460\right) $ &  & $0.054$ & $0.058$ & $0.056$ & $0.055$ & $%
0.057$ & $0.053$ & $0.060$ & $0.060$ & $0.057$ & $0.051$ & $0.051$ & $0.055$
\\ 
$\left( 0.5,3.3390\right) $ &  & $0.053$ & $0.062$ & $0.054$ & $0.055$ & $%
0.055$ & $0.056$ & $0.054$ & $0.054$ & $0.056$ & $0.057$ & $0.051$ & $0.055$
\\ 
$\left( 0.6,3.2245\right) $ &  & $0.052$ & $0.062$ & $0.052$ & $0.057$ & $%
0.055$ & $0.060$ & $0.058$ & $0.053$ & $0.054$ & $0.060$ & $0.054$ & $0.057$
\\ 
$\left( 0.7,3.1310\right) $ &  & $0.052$ & $0.059$ & $0.053$ & $0.054$ & $%
0.055$ & $0.059$ & $0.058$ & $0.054$ & $0.053$ & $0.054$ & $0.059$ & $0.054$
\\ 
$\left( 0.8,2.8650\right) $ &  & $0.055$ & $0.065$ & $0.056$ & $0.057$ & $%
0.058$ & $0.060$ & $0.056$ & $0.057$ & $0.052$ & $0.057$ & $0.052$ & $0.059$
\\ 
$\left( 0.9,2.6815\right) $ &  & $0.053$ & $0.062$ & $0.054$ & $0.058$ & $%
0.057$ & $0.060$ & $0.053$ & $0.052$ & $0.054$ & $0.057$ & $0.060$ & $0.060$
\\ 
$\left( 1.0,2.4440\right) $ &  & $0.054$ & $0.059$ & $0.056$ & $0.059$ & $%
0.055$ & $0.058$ & $0.058$ & $0.053$ & $0.057$ & $0.058$ & $0.060$ & $0.053$%
\end{tabular}

\bigskip
\end{center}

\textbf{Table 4. Empirical rejection frequencies for the test for }$%
H_{0}:X_{t}$\textbf{\ is nonstationary. The cases considered in the table
are boundary cases in which }$E\ln \left\vert \varphi +b_{0}\right\vert =0$%
\textbf{\ - thus, }$X_{t}$\textbf{\ is nonstationary, and the figures in the
table should be interpreted as the power of the test. }\newpage

\begin{center}
\begin{tabular}{lllllc}
\textbf{Series} & \textbf{Unit} & \textbf{Frequency} & \textbf{SA} & \textbf{%
Sample} & $T$ \\ 
&  &  &  &  &  \\ 
GDP & Billions of dollars & Quarterly & YES & Q1 1947-Q3 2017 & $283$ \\ 
CPI & Percentage & Monthly & NO & Feb 1913-Nov 2017 & $1259$ \\ 
Ind. Prod. & Index (2012=100) & Monthly & YES & Jan 1919-Nov 2017 & $1187$
\\ 
M2 & Billions of dollars & Weekly & YES & 03/11/80-11/12/17 & $1937$ \\ 
Unemployment & Percentage & Monthly & YES & Jan 1948-Nov 2017 & $839$ \\ 
T-Bill & Percentage & Daily & NO & 21/12/12-20/12/17 & $1304$%
\end{tabular}

\bigskip
\end{center}

\textbf{Table 5. Data description of the series employed in the empirical
exercise; the column headed SA refers to whether data are seasonally
adjusted or not. }

\begin{center}
\newpage

\begin{tabular}{lllllllll}
\textbf{Series} & $\widehat{\varphi }$ & $\widehat{\sigma }_{b}^{2}$ & $ERS$
& $KPSS$ & \multicolumn{2}{c}{$%
\begin{array}{c}
Q\left( 0.05\right)  \\ 
H_{0}:X_{t}\text{ stationary}%
\end{array}%
$} & \multicolumn{2}{c}{$%
\begin{array}{c}
Q\left( 0.05\right)  \\ 
H_{0}:X_{t}\text{ nonstationary}%
\end{array}%
$} \\ 
&  &  &  &  & (level) & (first-diff) & (level) & (first-diff) \\ 
GDP & $0.9988$ & $-1.38\times 10^{-4}$ & $\ 96.10$ $\ \left( ^{\ast }\right) 
$ & $0.293\left( ^{\ast }\right) $ & $0.0000\left( ^{\ast }\right) $ & $%
0.9514$ & $0.9526$ & $0.0000\left( ^{\ast }\right) $ \\ 
CPI & $1.0002$ & $8.19\times 10^{-3}$ & $\ 45.66$ $\ \left( ^{\ast }\right) $
& $0.709\left( ^{\ast }\right) $ & $0.0000\left( ^{\ast }\right) $ & $0.9460$
& $0.9528$ & $0.0000\left( ^{\ast }\right) $ \\ 
Ind. Prod. & $0.9995$ & $2.32\times 10^{-4}\left( ^{\ast }\right) $ & $N/A$
& $N/A$ & $0.0000\left( ^{\ast }\right) $ & $0.9448$ & $0.9468$ & $%
0.0000\left( ^{\ast }\right) $ \\ 
M2 & $0.9999$ & $-6.64\times 10^{-7}$ & $133.82\left( ^{\ast }\right) $ & $%
0.356\left( ^{\ast }\right) $ & $0.0000\left( ^{\ast }\right) $ & $0.9486$ & 
$0.9482$ & $0.0000\left( ^{\ast }\right) $ \\ 
Unemployment & $0.9884$ & $2.39\times 10^{-3}\left( ^{\ast }\right) $ & $N/A$
& $N/A$ & $0.0000\left( ^{\ast }\right) $ & $0.9472$ & $0.9444$ & $%
0.0000\left( ^{\ast }\right) $ \\ 
T-Bill & $1.0021$ & $7.18\times 10^{-5}$ & $\ 66.17\left( ^{\ast }\right) $
& $0.861\left( ^{\ast }\right) $ & $0.0000\left( ^{\ast }\right) $ & $0.9500$
& $0.9500$ & $0.0000\left( ^{\ast }\right) $%
\end{tabular}

\bigskip
\end{center}

\textbf{Table 6. Outcomes of estimation and testing for the null of strict
stationarity. We have also reported the WLS\ estimators of }$\varphi $%
\textbf{\ and of }$\sigma _{b}^{2}$\textbf{\ (}$\widehat{\varphi }$\textbf{\
and }$\widehat{\sigma }_{b}^{2}$\textbf{), as studied in \citet{HT16}; the
symbol \textquotedblleft }$\left( ^{\ast }\right) $\textbf{%
\textquotedblright\ next to the values of }$\widehat{\sigma }_{b}^{2}$%
\textbf{\ denotes rejection of the null of no coefficient randomness
expressed as }$H_{0}:\sigma _{b}^{2}=0$\textbf{\ (we refer to \citealp{ht16}
for the theory of estimation and the test). As mentioned in the paper, we
have computed two popular unit root tests: the Elliot-Rothenberg-Stock test
(reported in the column ERS) and the KPSS test. The former has been carried
out under the hypothesis of a constant and a trend, using the Bartlett
kernel to estimate the spectral density and choosing the related bandwidth
via the criteria discussed in \citet{andrews91}. The same specifications
were used for the KPSS\ test. In the last four columns, we have reported the
values of }$Q\left( \alpha \right) $\textbf{\ (for }$\alpha =0.05$\textbf{)
for the data in levels and first differences, considering both testing for
the null of stationarity and the null of nonstationarity. In all tests
considered, the symbol \textquotedblleft }$\left( ^{\ast }\right) $\textbf{%
\textquotedblright\ indicates rejection of the null hypothesis. }

\end{landscape}

\end{document}